\documentclass[11pt]{amsart}  
\usepackage[left=1in,right=1in,bottom=1.2in]{geometry}  
\usepackage[T1]{fontenc}
\usepackage{amssymb,amsmath,amscd}
\usepackage{amsthm}
\usepackage[parfill]{parskip}
\usepackage{upquote}

\usepackage{hyperref}
\hypersetup{hidelinks}

\usepackage{url}

\usepackage{upgreek}
\usepackage{mathrsfs}
\usepackage{latexsym}
\usepackage{graphicx} 
\usepackage[x11names]{xcolor}
\usepackage{tikz-cd}
\usepackage{tikz} \usetikzlibrary{positioning,arrows.meta}
\usepackage{galois}
\usepackage{stmaryrd}

\usepackage{appendix}

\usepackage{enumitem}
\setlist{itemsep=0.2em, parsep=0.2em} 
\setlist[itemize,1]{label=\ensuremath{\blacktriangleright}}
\setlist[itemize,2]{label=\ensuremath{\triangleright}}

\theoremstyle{plain}
\newtheorem{theorem}{Theorem}[section]
\newtheorem{lemma}[theorem]{Lemma}

\newtheorem{criterion}[theorem]{Criterion}

\theoremstyle{definition}

\theoremstyle{remark}
\newtheorem{remark}[theorem]{Remark}

\newcommand{\animationlink}[2][Animation]{%
  \\[0.5ex]{\bf #1:} \protect\url{#2}}

\newcommand{\style}[1]{{\em #1}} 

\newcommand{\torsor}{\mathcal{P}} 

\newcommand{\coboundary}{\delta} 
\newcommand{\connecting}{\delta^*} 

\newcommand{\Z}{\mathbb{Z}} 
\newcommand{\R}{\mathbb{R}} 

\newcommand{\cgraph}{\Lambda} 
\newcommand{\coupling}{\lambda} 
\newcommand{\conf}{\mathcal{C}} 
\newcommand{\cupprod}{\smile} 

\title[Impossible by Degrees]{Impossible by Degrees: \\ Cohomology \& Bistable Visual Paradox}
\author{Lewis Ghrist}
\address{Department of Computer and Information Science \\ University of Pennsylvania \\ Philadelphia, PA 19104}
\email{lghrist@seas.upenn.edu}
\author{Robert Ghrist}
\address{Departments of Mathematics and Electrical \& Systems Engineering \\ University of Pennsylvania \\ Philadelphia, PA 19104}
\email{ghrist@math.upenn.edu}

\begin{document}

\begin{abstract}
The Penrose triangle, staircase, and related ``impossible objects'' have long been understood as related to first cohomology $H^1$: the obstruction to extending locally consistent interpretations around a loop. This paper develops a cohomological hierarchy for a class of visual paradoxes. 
Restricting to systems built from \emph{bistable} elements -- components admitting exactly two local states, such as the Necker cube's forward/backward orientations, a gear's clockwise/counterclockwise spin, or a rhombic tiling corner's convex/concave interpretation -- allows the use of $\Z_2$ coefficients throughout, reducing obstruction theory to parity arithmetic. This reveals a hierarchy of paradox classes from $H^0$ through $H^2$, refined at each degree by the relative/absolute distinction, ranging from ambiguity through impossibility to inaccessibility.
A discrete Stokes theorem emerges as the central tool: at each degree, the connecting homomorphism of relative cohomology promotes boundary data to interior obstruction, providing the uniform mechanism by which paradoxes ascend the hierarchy.

Three paradigmatic systems -- Necker cube fields, gear meshes, and rhombic tilings -- are studied in detail. Throughout, we pair cohomology with imagery and animation. To illuminate the underlying structure, we introduce the \emph{Method of Monodromic Apertures}, an animation technique that reveals monodromy through a configuration space of local sections.
\end{abstract}

\subjclass[2020]{55N10, 52C23, 00A66, 68U05}
\keywords{cohomology, torsor, holonomy, monodromy, paradox}

\maketitle

\section{Introduction}
\label{sec:intro}

The Penrose triangle and staircase, introduced by L. Penrose and R. Penrose in \cite{penrose1958impossible}, are the archetypal {\em impossible objects}: locally coherent images admitting no globally consistent three-dimensional interpretation. In 1992, R. Penrose recast these figures in the language of \v{C}ech cohomology, classifying the triangle by the cohomology group $H^1$ of an annulus with coefficients in the multiplicative group $\R^+$ of depths \cite{Penrose1992Cohomology}. That paper contained a second figure, more enigmatic than the triangle: see Figure~\ref{fig:penrose-necker}[left].
In this image, multiple copies of the Schr\"oder staircase \cite{schroder1858optische,Donaldson2017SchroederStairs} -- an illusion of ambiguous ascent and descent (Figure~\ref{fig:penrose-necker}[center]) -- ring an annular domain with heptagonal boundary. Penrose declared this figure to have the ambiguity group of the Necker cube (Figure~\ref{fig:penrose-necker}[right]), the bistable wireframe discovered in 1832 \cite{necker1832observations}. After stating that the paradox of the heptagonal staircase is captured by $H^1$ of the annulus with coefficients in $\Z_2$, he concluded with a cryptic hint:

\begin{figure}[htb]
    \centering
    \setlength{\fboxsep}{0pt}
   \fbox{\includegraphics[height=4.75cm]{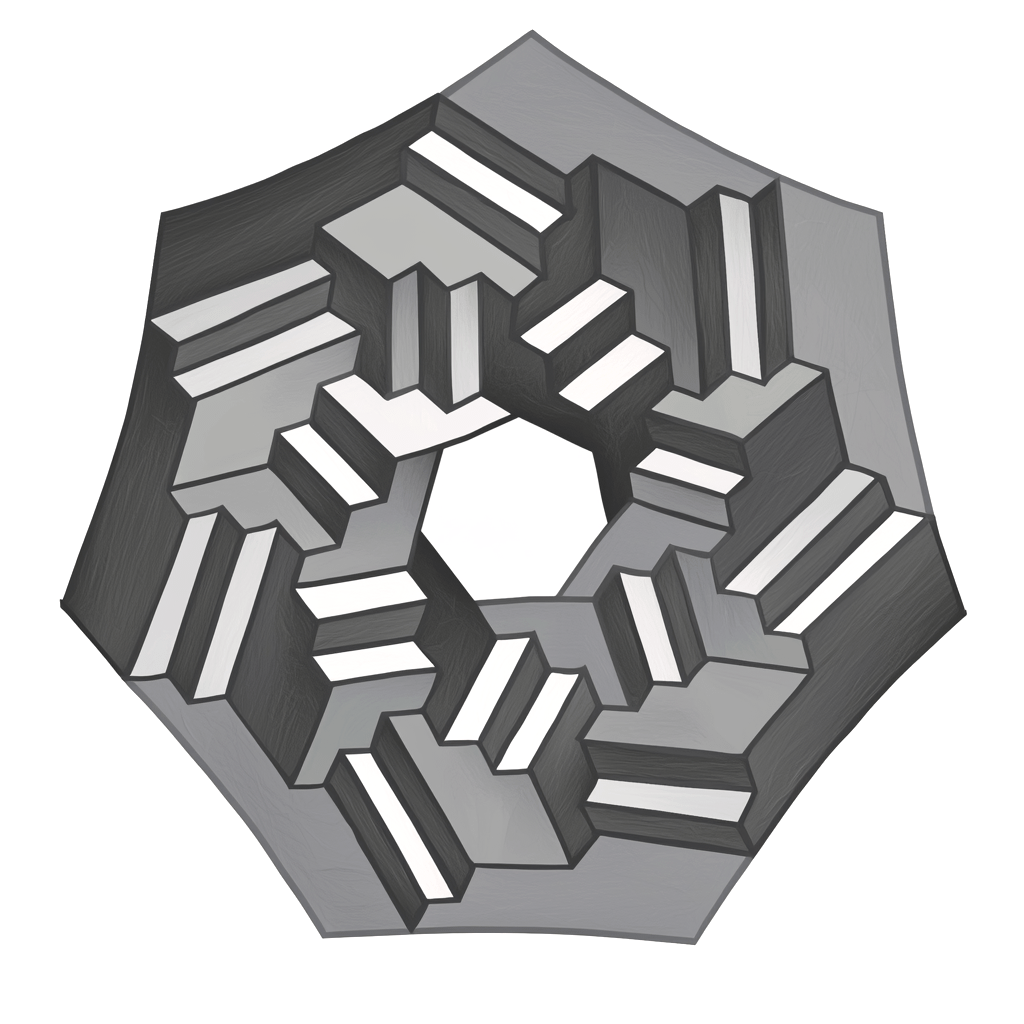}}
    \hspace{0.5em}
   \fbox{\includegraphics[height=4.75cm]{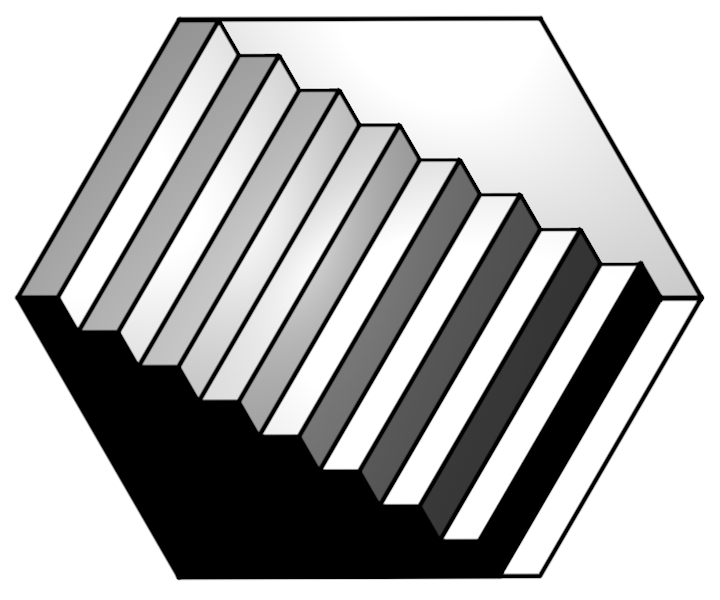}}
    \hspace{0.5em}
   \fbox{\includegraphics[height=4.75cm]{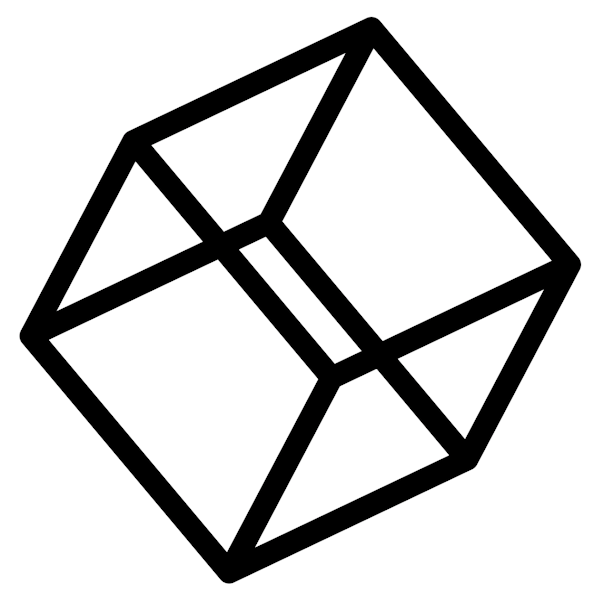}}
    \caption{\small An unusual figure [left] of Penrose from \cite{Penrose1992Cohomology} is based on the Schr\"oder staircase [center] and has the bistable ambiguity of the classical Necker cube [right].}
    \label{fig:penrose-necker}
\end{figure}

\begin{quote}
{\em ``I believe that considerations such as these may open up intriguing possibilities for further exotic types of impossible figure.''} 
\end{quote}

This paper takes up Penrose's challenge in a simplified setting, motivated by his heptagonal stairs. We develop a cohomological hierarchy for visual paradoxes built from \style{bistable} elements -- ambiguous components admitting exactly two local states. The restriction to bistable systems permits $\Z_2$ coefficients throughout, simplifying the algebra and revealing a clean taxonomy:
\begin{quote}
Base $H^0$ detects \emph{ambiguity}: multiple valid global readings.\\
Relative $H^1$ detects \emph{conflict}: boundary-forced incompatibility.\\
Absolute $H^1$ detects \emph{impossibility}: no valid global reading.\\
Relative $H^2$ detects \emph{curvature}: localized defects forced by boundary holonomy.\\
Absolute $H^2$ detects \emph{inaccessibility}: sectors unreachable by local moves.
\end{quote}
The transitions are driven by a single mechanism: the connecting homomorphism $\connecting$ of relative cohomology, which coincides with the discrete Stokes theorem.

The framework builds on recent work classifying visual paradoxes via \style{network torsors} \cite{GhristCooperband2025Obstructions}, which posited torsors as the mathematical structure underlying impossible figures, classified paradoxes via $H^1$ with general structure groups (including nonabelian examples), and analyzed boundary-induced paradoxes via relative cohomology. That paper established the framework on which this paper builds. By restricting to bistable elements, we gain both simplicity -- the machinery becomes elementary -- and a clear path upward through the cohomological hierarchy. The systematic ascent from $H^0$ through relative and absolute $H^1$ toward relative and absolute $H^2$ is the storyline. Specific contributions include: the identification of the first visual paradoxes requiring $H^2$ for their detection and classification; the discrete Stokes theorem as the uniform mechanism promoting paradoxes from one level to the next; and a novel animation technique that converts holonomy into visible monodromy by constructing torsors over configuration space.

This circle of ideas connects to sheaf-cohomological approaches to quantum contextuality \cite{AbramskyBrandenburger2011Sheaf,AbramskyBarbosa2015Paradox,Caru2017Cohomology}, where measurements depend on context and the obstruction to consistent value assignments is likewise captured by cohomology. The parallel is not accidental: both settings involve local data that cannot be globalized, and both are naturally expressed in the language of cohomology, torsors, and obstructions.

There is a ready-made theory for higher-dimensional obstructions. Just as $\Z_2$-torsors (double covers, principal $\Z_2$-bundles) are classified by $H^1$, so \emph{gerbes} provide the next categorical level, classified by $H^2$ or $H^3$ depending on the structure \cite{Brylinski1993LoopSpaces,Moerdijk2002StacksGerbes}. The framework extends to discrete and lattice settings \cite{BullivantEtAl2017HigherLattices}. However, for bistable systems the full gerbe machinery is unnecessary. Working with $\Z_2$ coefficients throughout, the classification of obstructions reduces to parity counts, and $H^2$ phenomena become accessible through elementary means: relative cohomology, cup products, and degree-shifting.

At each degree, the relative/absolute distinction separates boundary-forced from intrinsic phenomena: a \style{relative} obstruction arises from boundary data that fails to extend inward; an \style{absolute} obstruction is intrinsic to the constraint structure. The connecting homomorphism $\connecting$ of relative cohomology promotes boundary data at degree $k$ to interior obstructions at degree $k+1$ -- and the resulting computation is precisely the discrete Stokes theorem:
\begin{quote}
\emph{Boundary holonomy becomes interior curvature.}
\end{quote}
This single mechanism connects all five levels: incompatible $H^0$ boundary data yields relative $H^1$ conflict; cycles with nontrivial $H^1$ holonomy yield relative $H^2$ curvature. Between the relative and absolute at each degree, the distinction is whether the obstruction requires external intervention or is self-generated.

To make torsors visible rather than merely computed, we introduce the \emph{Method of Monodromic Apertures} (MoMA): an animation technique that converts holonomy into perceptible monodromy. A sliding window reveals local sections; after traversing a loop, the displayed section returns flipped. This is the torsor structure made manifest through motion.

The paper proceeds in three movements. The first (\S\S\ref{sec:three-systems}--\ref{sec:framework}) introduces the three paradigmatic bistable systems and formalizes their common structure via constraint graphs, coupling cochains, and the discrete Stokes theorem. The second (\S\S\ref{sec:hierarchy}--\ref{sec:MoMA}) develops the hierarchy through $H^1$: boundary-forced conflict (relative $H^1$), intrinsic impossibility (absolute $H^1$), the torsor perspective, and the Method of Monodromic Apertures for visualizing monodromy through animation. The third (\S\S\ref{sec:relH2}--\ref{sec:absH2}) ascends to $H^2$: boundary-forced curvature defects (relative $H^2$), then seam interference via cup products and degree-shifted flux models (absolute $H^2$).

While the illustrations in this paper seek to capture the nature of cohomologically-graded visual paradox, at some point still figures are insufficient, and animation is the clearer medium. Rather than try to embed low-quality animations as an Appendix or supplemental files, we have chosen to publish a public playlist of animations with commentary that illustrate the various systems in this paper (and more). The reader is encouraged to view these at: 

\begin{center}    
\protect\url{https://www.youtube.com/playlist?list=PL8erL0pXF3JaYUQHC5G0iFM0lXcGlexch}
\end{center}

\section{Three Bistable Systems}
\label{sec:three-systems}

Three paradigmatic systems exhibit bistable elements interacting through pairwise constraints: Necker cube fields, gear meshes, and rhombic tilings. We introduce each in turn, emphasizing the common structure that will unify them in \S\ref{sec:framework}.

\subsection{Necker cube fields}
\label{sec:necker}

The Necker cube is the archetypal bistable image: a wireframe cube admitting two stable depth interpretations, between which a viewer can flip at will.
Arrange multiple Necker cubes in a grid and the interpretations become coupled. Choose any cube to face forward or backward; this choice constrains its neighbors, propagating through the connected field. The entire grid inherits exactly two global states from the bistability of a single element.
Unlike the Penrose triangle, there is no inherent paradox in a Necker cube grid. But suppose we \emph{pin} certain cubes to opposite orientations, as in Figure~\ref{fig:necker-field-basic}. The agreement constraints propagating from each pinned cube conflict in the interior. This is a \style{relative paradox}: the obstruction arises not from the constraint structure itself but from incompatible boundary conditions -- \style{conflict} in the terminology of \S\ref{sec:hierarchy}. We formalize this via relative cohomology in \S\ref{sec:relH^1}.

\begin{figure}[htb]
    \centering
    \setlength{\fboxsep}{0pt}
    {\includegraphics[width=5.5in]{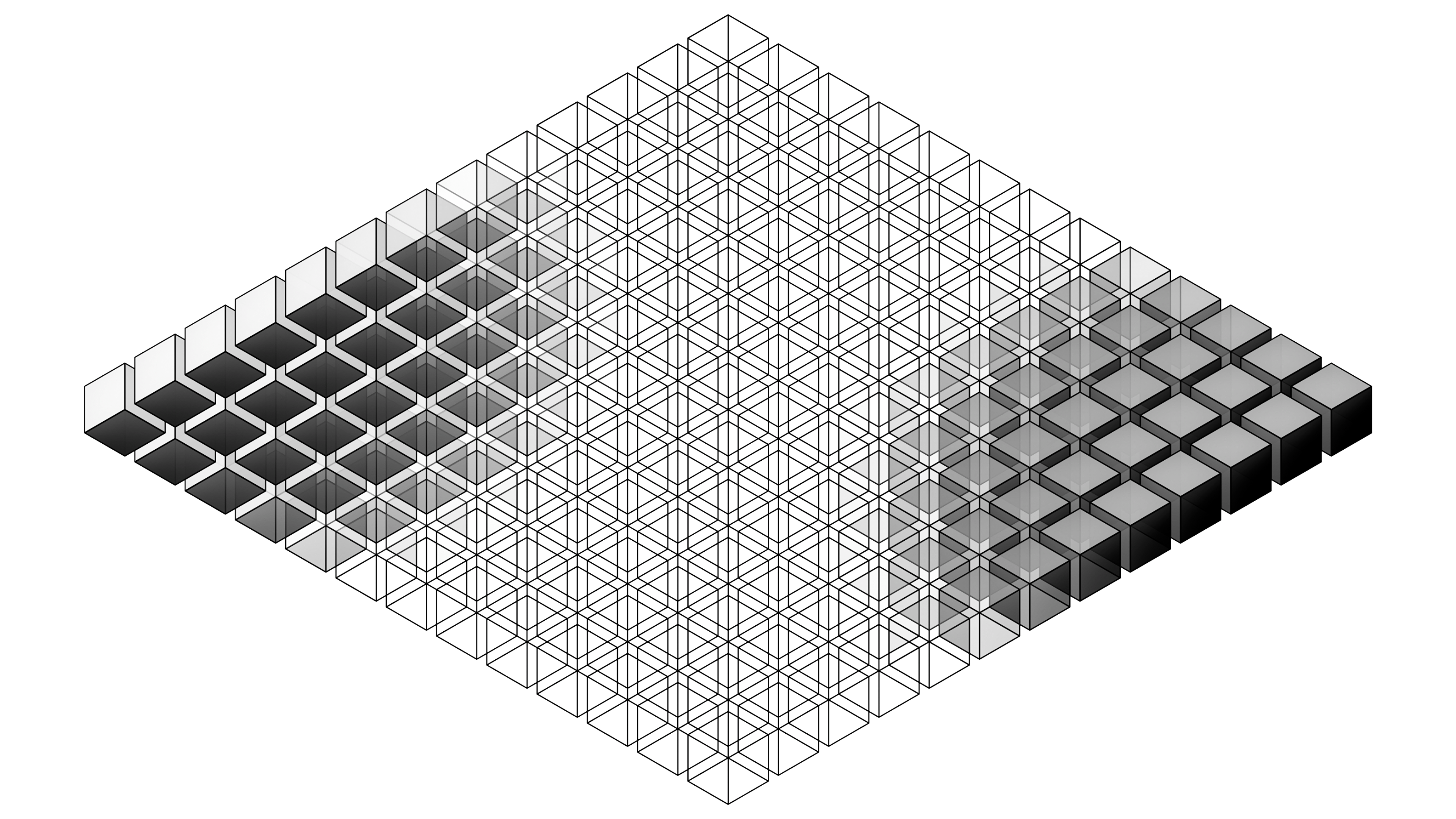}}
    \caption{\small A 2-D field of Necker cubes. Choosing an orientation for any one cube determines the orientation of all cubes in the connected component. Pinning two disjoint subsets of cubes to opposite states creates a conflict -- a relative paradox.}
    \label{fig:necker-field-basic}
\end{figure}

The static Necker cube's bistability has a dynamic counterpart. Set the wireframe rotating about a vertical axis (Figure~\ref{fig:necker-spinning}). Which way does it turn? The front/back ambiguity becomes rotational ambiguity: the perceived spin direction reverses when the viewer flips depth interpretation. This is the mechanism behind the ``spinning dancer'' illusion, and it applies to any wireframe or silhouette with front/back symmetry.

\begin{figure}[htb]
    \centering
    \setlength{\fboxsep}{0pt}
    \fbox{\includegraphics[width=6.5in]{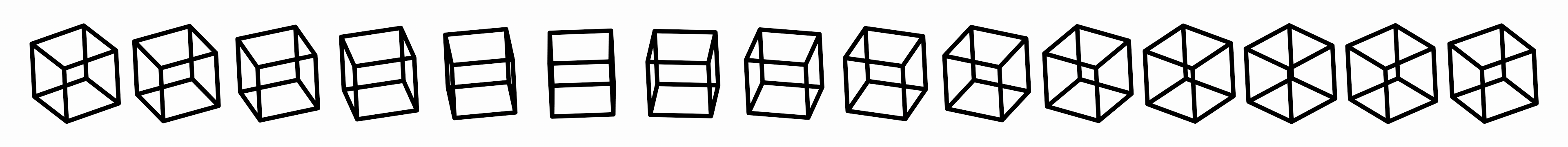}}
    \caption{\small A Necker cube rotating about a vertical axis. The depth ambiguity of the static figure becomes rotational ambiguity: the same animation can be perceived as clockwise or counterclockwise rotation.}
    \label{fig:necker-spinning}
\end{figure}

Rotational bistability converts a static classification problem -- which way does this cube face? -- into a dynamical one -- which way does it spin? -- while preserving the $\Z_2$ structure. Animation is not merely illustration but a tool for revealing paradox, as we shall see in \S\ref{sec:MoMA}.

\subsection{Gear meshes}
\label{sec:gears}

Rotational bistability finds natural mechanical expression in gear systems. When two external (spur) gears mesh, their teeth interlock and force \emph{opposite} rotations: if one turns clockwise, its neighbor must turn counterclockwise. This \style{opposition constraint} transforms isolated bistability into coupled variables. Choose a spin direction for any gear, and the mesh determines all others -- provided no contradiction arises.

Consider $n$ external gears in a ring, each meshing with its two neighbors (Figure~\ref{fig:gear-ring}). If $n$ is even, alternating spins around the loop satisfies all opposition constraints, and the system spins freely. If $n$ is odd, no such alternation exists -- the final gear must simultaneously agree and oppose its neighbor. The system \emph{locks}: a mechanical paradox that we will identify as a nontrivial $H^1$ class.

\begin{figure}[htb]
    \centering
    \includegraphics[width=5in]{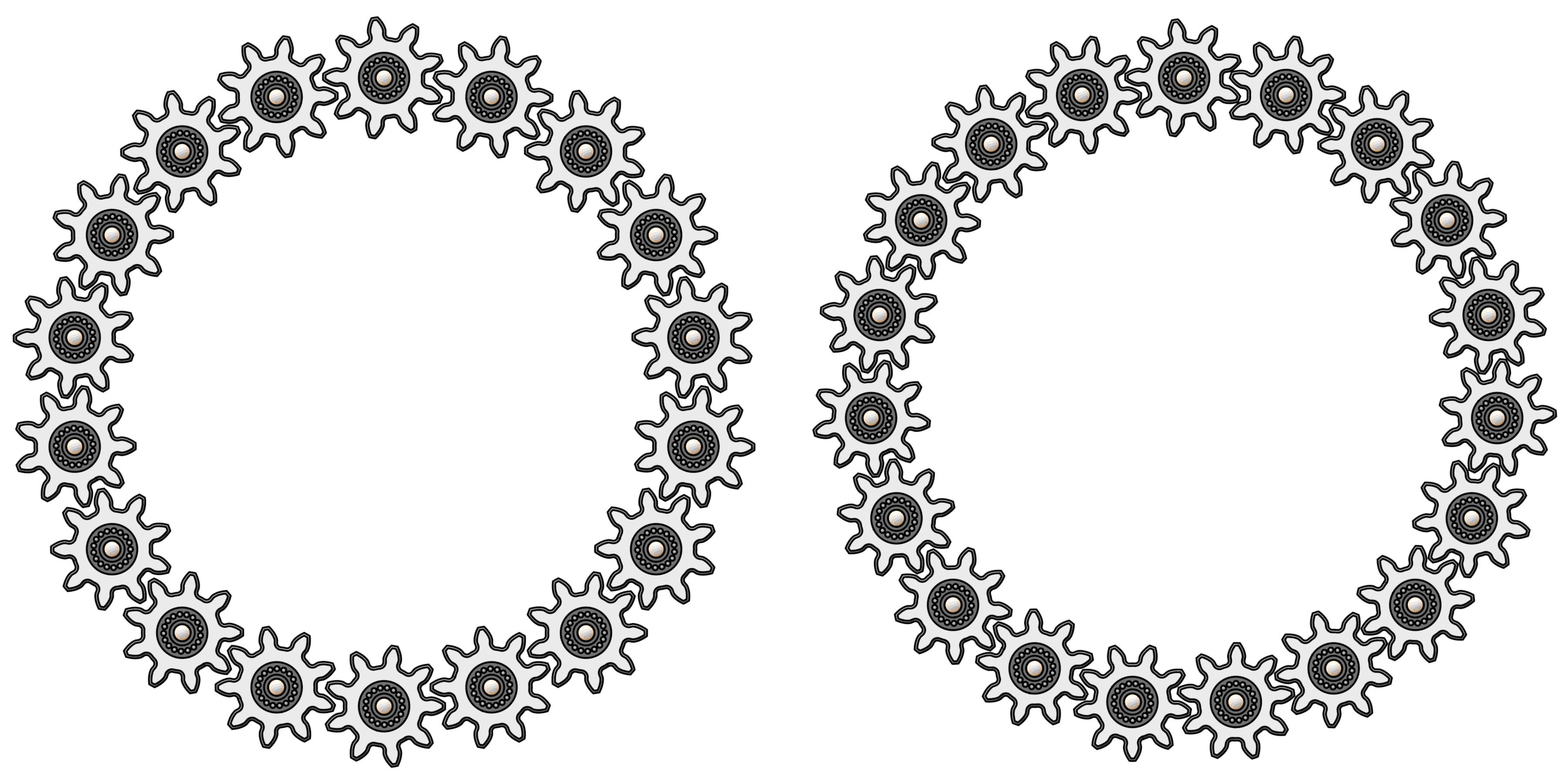}
    \caption{\small With an even number of gears in a ring, the system spins freely [left], 
     but an odd number of gears locks [right].}
    \label{fig:gear-ring}
\end{figure}

Planar gear meshes extend this to two dimensions. A square grid of gears, each meshing with four neighbors, has a bipartite contact graph: vertices partition into two classes (checkerboard coloring), with all edges between classes. Assign clockwise to one class, counterclockwise to the other, and all opposition constraints are satisfied. More generally, a gear mesh with pure opposition is consistent if and only if its contact graph contains no odd cycles.
Ring gears and belts introduce agreement constraints within otherwise opposition-dominated systems; bevel gears extend the picture to three dimensions. We restrict attention to external spur gears in this paper.

\subsection{Rhombic tilings}
\label{sec:stepped}

Rhombic tilings of the plane -- tessellations by parallelograms with equal side lengths -- admit natural interpretations as projections of three-dimensional stepped surfaces. The regular \style{lozenge tiling}, built from $60^\circ$ rhombi, arises as the projection along the $(1,1,1)$ direction of a monotone surface in $\Z^3$: a ``pile of cubes'' whose boundary facets project to the rhombi: see Figure~\ref{fig:tilings}[left].

The bistable elements are \style{degree-3 vertices}: corners where exactly three rhombi meet. At such a vertex, the incident rhombi suggest a cube corner seen in projection -- either \style{convex} (pointing toward the viewer) or \style{concave} (pointing away). This binary choice is the bistable variable.
When two degree-3 vertices are opposite corners of the same rhombus, they are \style{linked}. A consistent stepped-surface interpretation requires linked vertices to take opposite states: one convex, the other concave. This is the same opposition constraint we encountered with meshing gears, now encoded in tiling geometry.
The lozenge tiling's constraint graph -- degree-3 vertices connected by linkages across rhombus diagonals -- forms a hexagonal lattice. This graph is bipartite: all cycles have even length, and consistent alternation of convex/concave is always possible. No obstruction arises, just as an even gear cycle spins freely.

\begin{figure}[htb]
    \centering
    \setlength{\fboxsep}{0pt}
  \fbox{\includegraphics[height=6cm]{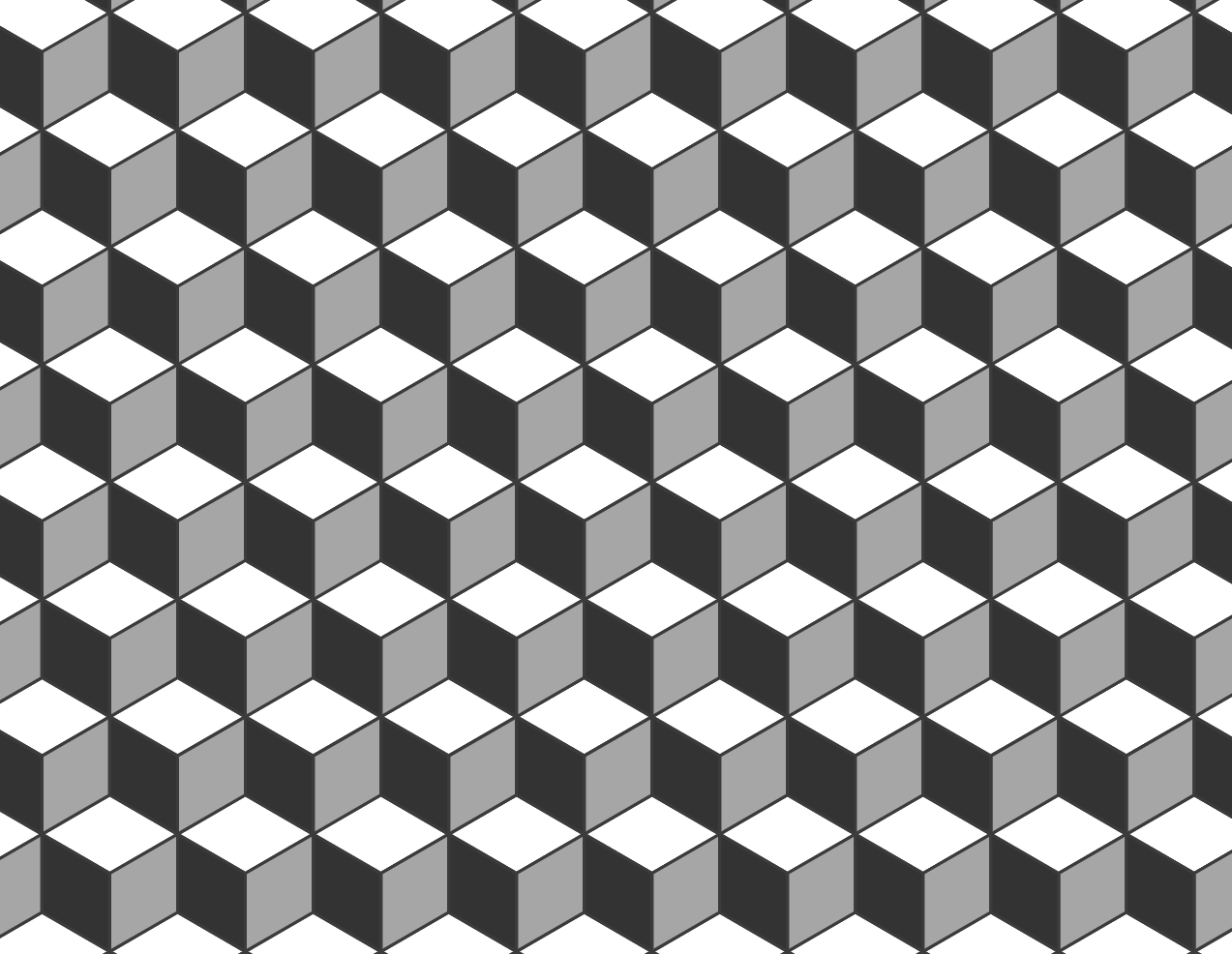}}
   \hspace{0.5em}
  \fbox{\includegraphics[height=6cm]{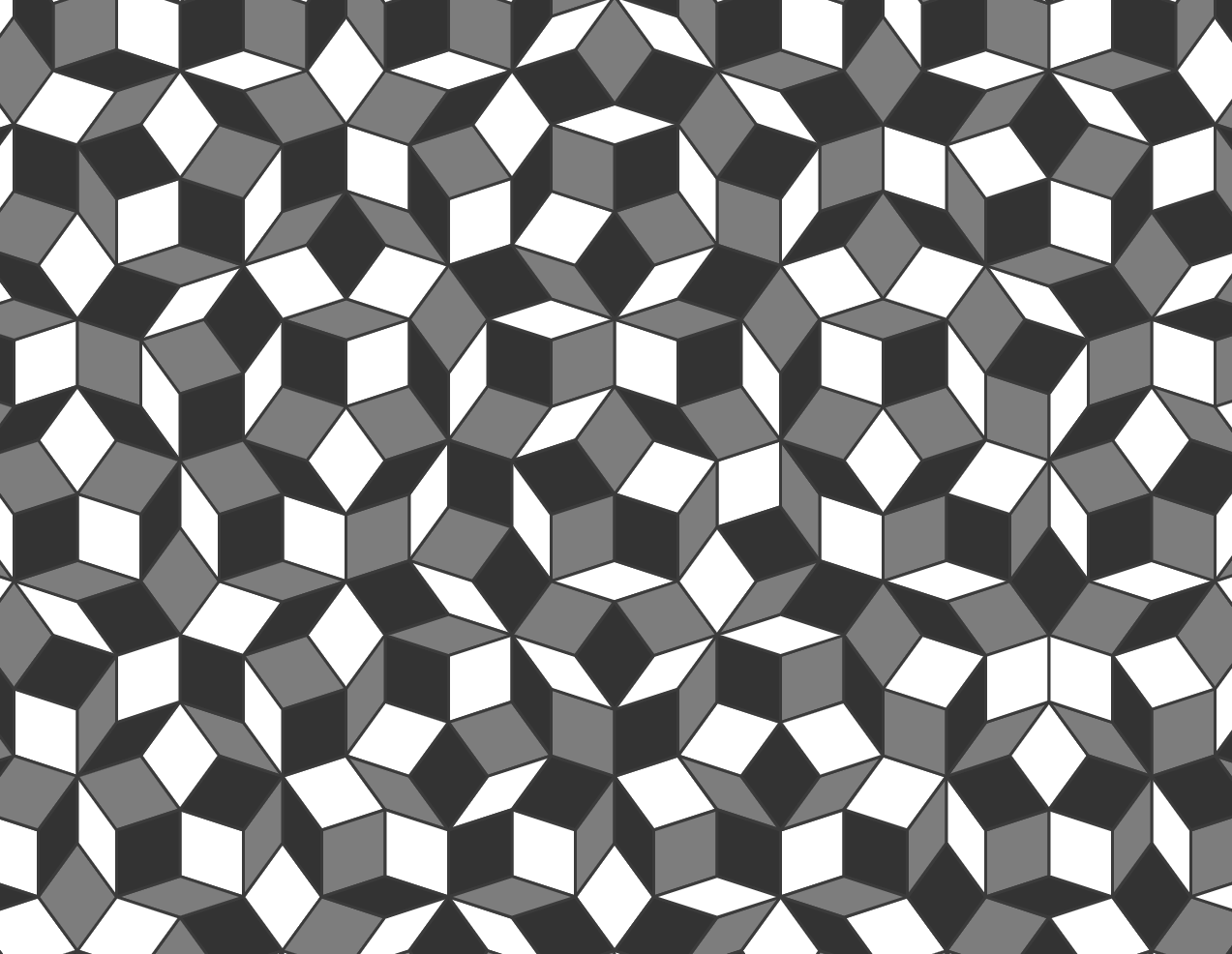}}
    \caption{\small Two rhombic tilings: [left] the regular lozenge tiling; [right] a patch of Penrose P3 tiling, centered on a five-fold rosette.}
    \label{fig:tilings}
\end{figure}

The \style{Penrose P3 tiling} is different. Built from two rhombic prototiles with angles $72^\circ/108^\circ$ and $36^\circ/144^\circ$, it exhibits quasiperiodic order with local five-fold symmetry. Its characteristic feature is the \style{pentagonal rosette}: five thick rhombi arranged around a central vertex, as in Figure~\ref{fig:tilings}[right]. The rosette contains five degree-3 corners forming a 5-cycle in the constraint graph, each edge carrying an opposition constraint. Attempting to alternate convex and concave around the pentagon fails for the same reason five meshing gears lock: the cycle length is odd, and no consistent assignment exists.

This is precisely the structure of Penrose's enigmatic heptagonal staircase (Figure~\ref{fig:penrose-necker}[left]), with cycle length 5 rather than 7. Both configurations consist of bistable elements in an odd ring with opposition constraints. The Schr\"oder stairs play exactly the role of degree-3 corners: each admits two interpretations (ascending versus descending), and adjacent elements must differ for local consistency.

Longer odd cycles -- of length 15, 35, and beyond -- thread through the P3 tiling, each presenting the same obstruction. The parallel with gear meshes is exact: degree-3 vertices play the role of gears, linkages play the role of meshing contacts, and the opposition constraint (convex versus concave) plays the role of counter-rotation. 

\section{The Mathematical Framework}
\label{sec:framework}

The three bistable systems of the previous section share a common structure that we now make precise. Throughout, we use the following correspondences:
\begin{quote}
A \style{state} $x \in C^0(\cgraph; \Z_2)$ assigns a binary percept to each vertex: forward/backward, clockwise/counterclockwise, or convex/concave.

A \style{coupling} $\coupling \in C^1(\cgraph; \Z_2)$ encodes pairwise constraints: $\coupling(e) = 0$ demands agreement; $\coupling(e) = 1$ demands opposition.

A \style{global section} is a state satisfying all constraints: $\coboundary x = \coupling$.

The \style{obstruction} to existence is the cohomology class $[\coupling] \in H^1(\cgraph; \Z_2)$.
\end{quote}
The key players are a \style{constraint graph} $\cgraph$ encoding which elements interact, the \style{coupling cochain} $\coupling$ specifying how, and a \style{curvature} $\mu = \coboundary\coupling$ measuring local frustration when 2-cells are present. The central result is Stokes' theorem, relating boundary holonomy to interior curvature.

\subsection{Constraints and couplings}
\label{sec:coupling}

The constraint graph $\cgraph$ has bistable elements as vertices and edges connecting elements whose states are coupled. In a \emph{Necker cube field}, each cube is a vertex; edges connect cubes sharing a face (agreement constraint). In a \emph{gear mesh}, each gear is a vertex; edges connect meshing gears (opposition constraint). In a \emph{rhombic tiling}, each degree-3 corner is a vertex; edges connect opposite corners of the same rhombus (opposition constraint). Note that in the rhombic case, the edges of $\cgraph$ are \emph{not} edges of the original tiling -- they are diagonals connecting degree-3 corners across each rhombus. The tiling serves as scaffolding; once $\cgraph$ is extracted, the original edges disappear from the analysis. Figure~\ref{fig:constraint-graphs} illustrates this for the P3 tiling.

\begin{figure}[htb]
    \centering
    \setlength{\fboxsep}{0pt}
  \fbox{\includegraphics[height=6cm]{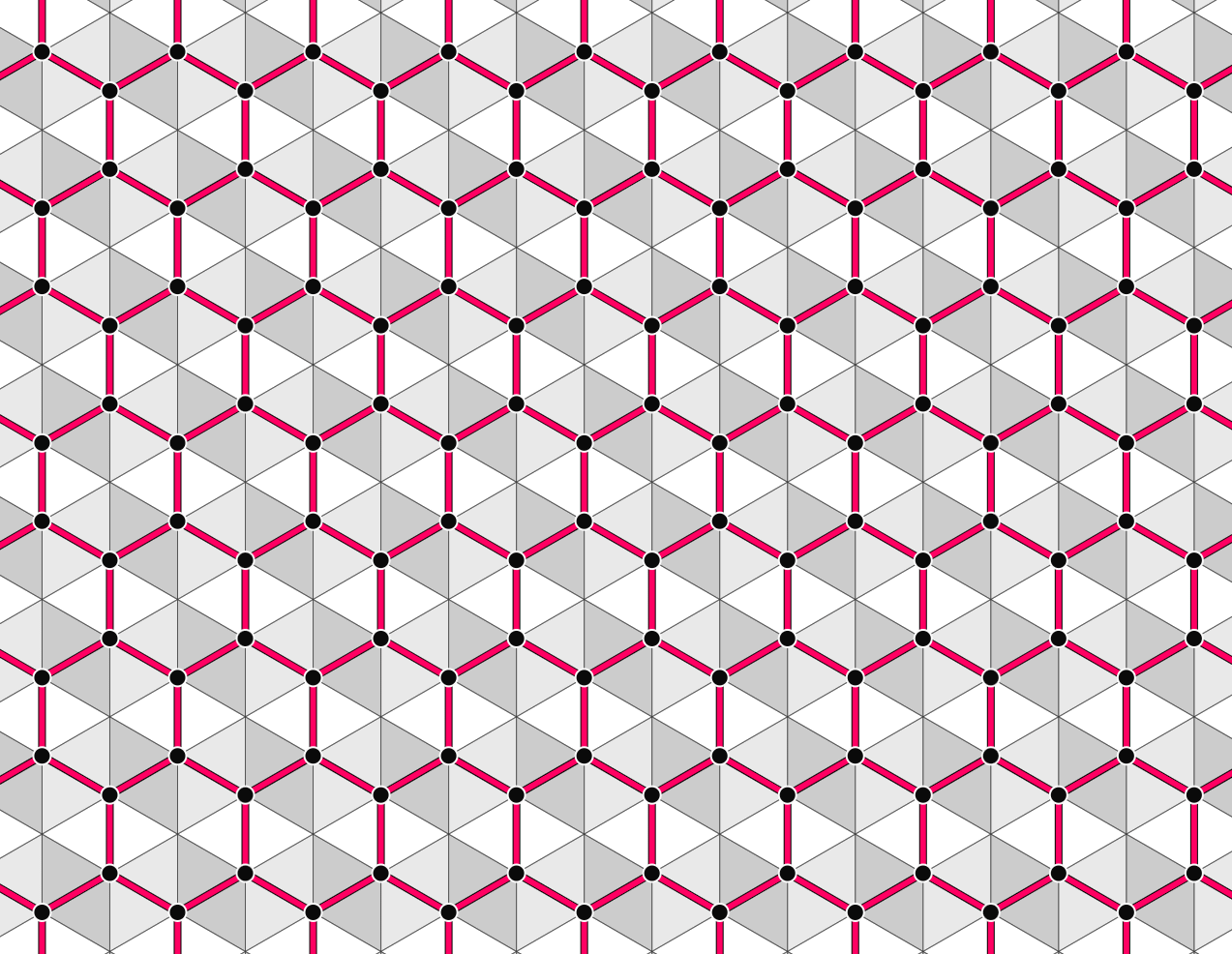}}
   \hspace{0.5em}
  \fbox{\includegraphics[height=6cm]{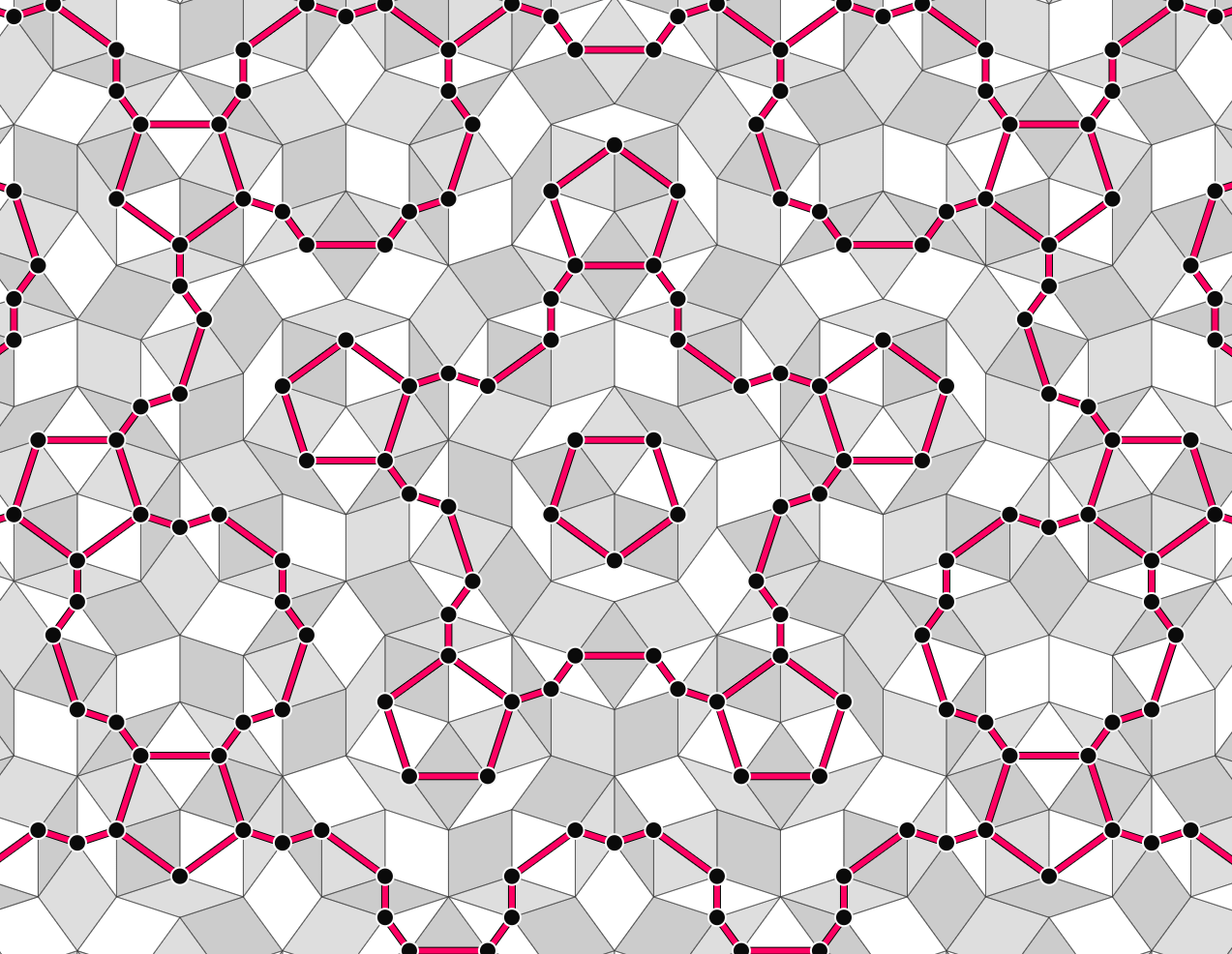}}
    \caption{\small Constraint graphs $\cgraph$ extracted from the two rhombic tilings of Figure~\ref{fig:tilings}. Vertices are degree-3 corners; red edges connect linked pairs across rhombus diagonals. The lozenge tiling yields a hexagonal graph with only even cycles [left], while the P3 tiling yields pentagons at each rosette, plus longer cycles, even and odd [right].}
    \label{fig:constraint-graphs}
\end{figure}

The coupling cochain $\coupling \in C^1(\cgraph; \Z_2)$ specifies constraint types: $\coupling(e) = 0$ means the endpoints of $e$ must \emph{agree}; $\coupling(e) = 1$ means they must \emph{oppose}.
For Necker fields, $\coupling \equiv 0$ (agreement); for gear meshes and rhombic tilings, $\coupling \equiv 1$ (opposition).

A state $x \in C^0(\cgraph; \Z_2)$ is \style{globally consistent} if $\coboundary x = \coupling$, where the coboundary $(\coboundary x)(e) = x(u) + x(v)$ computes the parity of states across each edge. Whether solutions exist is the question of whether $\coupling$ is a coboundary.

For some systems, the constraint graph $\cgraph$ embeds naturally in an ambient cell complex $X$: a grid of gears in the plane, a field of Necker cubes in space. We take $\cgraph \subset X^{(1)}$ as a subgraph of the 1-skeleton. Edges of $X^{(1)}$ not in $\cgraph$ -- the \style{free edges} -- carry no bistable constraint.

For rhombic tilings, the constraint edges are diagonals, not tiling edges, so no natural ambient complex presents itself. We construct $X$ by adding free edges until the augmented graph bounds polygonal 2-cells. In Figure~\ref{fig:P3-cell-structure}, red edges form $\cgraph$; blue edges complete the cell structure.

\begin{figure}[htb]
    \centering
    \setlength{\fboxsep}{0pt}
   \fbox{\includegraphics[width=5.75in]{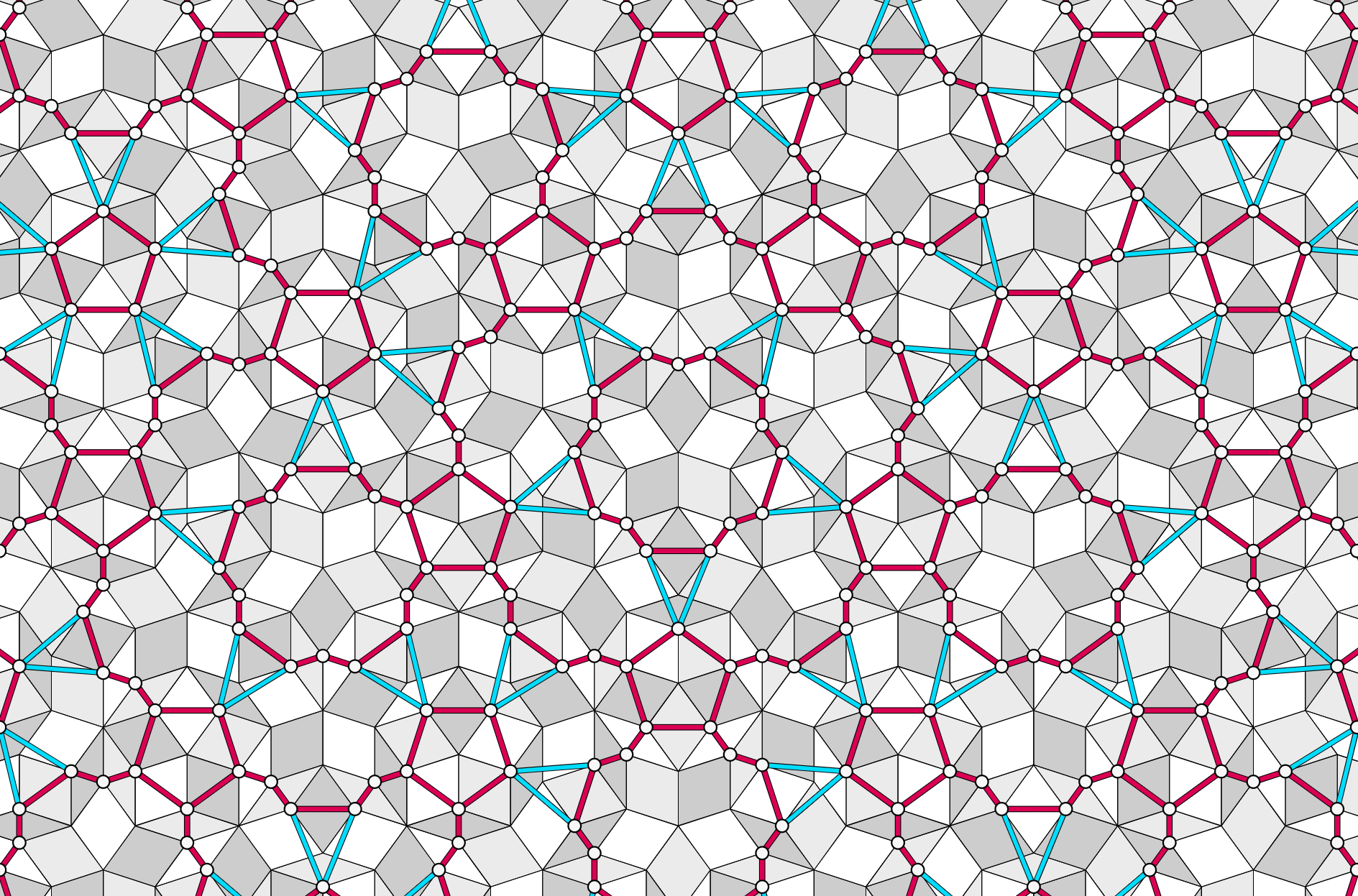}}
    \caption{\small A patch of P3 tiling with its associated cell complex $X$. Red edges form the constraint graph $\cgraph$, connecting degree-3 corners across rhombus diagonals. Blue edges are added (based on degree four vertices) to complete a regular 2-dimensional cell structure. The 2-cells bounded by the red and blue edges fill the plane.}
    \label{fig:P3-cell-structure}
\end{figure}

The result is a cell complex $X$ with $\cgraph \subset X^{(1)}$. An \style{extension} $\tilde\coupling \in C^1(X; \Z_2)$ agrees with $\coupling$ on $\cgraph$ and may be chosen arbitrarily on free edges. Different extensions yield different curvatures, addressed below.

\subsection{The holonomy criterion}
\label{sec:holonomy}

The equation $\coboundary x = \coupling$ is solvable if and only if $[\coupling] = 0$ in $H^1(\cgraph; \Z_2)$. Appendix~\ref{sec:appendix-cohomology} provides background; the essential point is a parity count around cycles.

\begin{criterion}[Holonomy Criterion]
\label{crit:holonomy}
Let $\cgraph$ be a connected graph and $\coupling \in C^1(\cgraph; \Z_2)$ a 
coupling cochain. A global state $x$ with $\coboundary x = \coupling$ exists 
if and only if every cycle $\gamma$ in $\cgraph$ satisfies 
$\sum_{e \in \gamma} \coupling(e) = 0$.
\end{criterion}

\begin{proof}
If $\coupling = \coboundary x$, then for any cycle $\gamma = (v_0, v_1, \ldots, v_n = v_0)$,
\[
\sum_{e \in \gamma} \coupling(e) = \sum_{i=0}^{n-1} \bigl(x(v_{i+1}) + x(v_i)\bigr) = 0,
\]
since each vertex appears exactly twice. Conversely, suppose $\coupling$ evaluates 
to zero on every cycle. Fix a basepoint $v_0$ and define $x(v)$ as the sum 
$\sum_{e \in p} \coupling(e)$ along any path $p$ from $v_0$ to $v$. This is 
well-defined: two paths from $v_0$ to $v$ differ by a cycle, and $\coupling$ 
vanishes on cycles. By construction, $(\coboundary x)(e) = x(v) + x(u) = \coupling(e)$ 
for each edge $e = uv$.
\end{proof}

The sum $\sum_{e \in \gamma} \coupling(e)$ is the \style{holonomy} of $\coupling$ around $\gamma$. Nonzero holonomy certifies \style{impossibility} -- an \style{absolute} paradox intrinsic to the constraints. When holonomy vanishes on all cycles, global states exist but may still be obstructed by incompatible boundary data -- a \style{relative} paradox, experienced as \style{conflict}, detected by relative cohomology.

\subsection{Curvature and Stokes' theorem}
\label{sec:curvature-stokes}

When $X$ is 2-dimensional, extending $\coupling$ to $X^{(1)}$ and applying the coboundary yields the \style{curvature} $\mu = \coboundary\tilde\coupling \in C^2(X; \Z_2)$. If $\cgraph = X^{(1)}$, no extension is needed; otherwise, different extensions yield different curvature representatives.

For a face $f$ with boundary edges $e_1, \ldots, e_k$, the curvature is $\mu(f) = \sum_i \tilde\coupling(e_i)$. A face is \style{flat} if $\mu(f) = 0$ and \style{frustrated} if $\mu(f) = 1$. Changing the extension on a free edge toggles $\mu$ on the two incident faces, redistributing curvature. The intrinsic quantity is identified by:

\begin{lemma}[Extension-independence]
\label{lem:extension-independence}
Let $D \subset X$ be a 2-chain whose boundary $\partial D$ lies entirely in the 
constraint graph $\cgraph$. Then the total curvature $\sum_{f \subset D} \mu(f)$ 
depends only on $\coupling|_{\partial D}$, not on the choice of extension 
$\tilde{\coupling}$.
\end{lemma}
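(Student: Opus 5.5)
The plan is to compute the total curvature over $D$ by discrete Stokes: rewrite it as an evaluation of $\tilde\coupling$ on the boundary of $D$. Regard $D$ as a $\Z_2$-chain $D = \sum_{f \subset D} f \in C_2(X;\Z_2)$. Then by definition of $\mu = \coboundary\tilde\coupling$,
\[
\sum_{f \subset D} \mu(f) \;=\; \sum_{f\subset D} \tilde\coupling(\partial f) \;=\; \tilde\coupling\Bigl(\sum_{f \subset D}\partial f\Bigr) \;=\; \tilde\coupling(\partial D),
\]
using only linearity of the cochain $\tilde\coupling$ and the fact that $\coboundary$ is the adjoint of the cellular boundary $\partial$ over $\Z_2$.

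The step that carries the content is the bookkeeping of interior edges. Over $\Z_2$ there are no orientation signs. Every edge of $X$ appears in $\sum_{f\subset D}\partial f$ with coefficient equal to the number of faces of $D$ incident to it, reduced mod $2$. An interior edge, lying in exactly two faces of $D$, contributes twice and cancels. More generally, any edge with even incidence cancels, and what survives is by definition $\partial D$. So $\sum_{f\subset D}\mu(f) = \sum_{e \in \partial D}\tilde\coupling(e)$.

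By hypothesis every edge of $\partial D$ lies in $\cgraph$. On such edges any extension satisfies $\tilde\coupling(e) = \coupling(e)$. Hence the total is $\sum_{e\in\partial D}\coupling(e)$, the holonomy of $\coupling$ around $\partial D$, which depends only on $\coupling|_{\partial D}$.

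As a sanity check, and as an alternative proof, consider two extensions. They differ by a cochain $\eta$ supported on free edges, so the two curvatures differ by $\coboundary\eta$. Summing over $D$ gives $\eta(\partial D) = 0$, since $\eta$ vanishes on $\cgraph \supset \partial D$. This matches the earlier remark that toggling a free edge flips $\mu$ on its two incident faces: either both faces lie in $D$, or the edge would be a boundary edge, which is impossible since it is not in $\cgraph$.

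The only potential obstacle is making precise what "2-chain" and "$f\subset D$" mean when $D$ is not a disk. I would simply take $D$ to be an arbitrary $\Z_2$-chain, and the computation above goes through verbatim.
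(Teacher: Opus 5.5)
Your proof is correct and is essentially the paper's argument: you rewrite $\sum_{f\subset D}\mu(f)$ as $\tilde\coupling(\partial D)$, cancel interior edges mod $2$, and use $\partial D\subset\cgraph$ to replace $\tilde\coupling$ by $\coupling$. Your even-incidence bookkeeping and the difference-of-extensions check are mild generalizations of the same computation. In the latter, you should also allow the case where neither incident face of a free edge lies in $D$, but the identity $\langle\coboundary\eta, D\rangle = \langle\eta,\partial D\rangle = 0$ already covers every case.
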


\begin{proof}
By definition of coboundary, $\sum_{f \subset D} \mu(f) = \sum_{f \subset D} (\coboundary\tilde{\coupling})(f) = \sum_{e \in \partial D} \tilde{\coupling}(e)$, the last equality following from the fact that each interior edge of $D$ appears in exactly two face boundaries (canceling in $\Z_2$), while each boundary edge appears in exactly one. Since $\partial D \subset \cgraph$, every edge in the boundary sum lies in $\cgraph$, where $\tilde{\coupling}$ agrees with $\coupling$ by definition.
\end{proof}

This lemma justifies speaking of ``the curvature inside $D$'' without specifying 
an extension, provided $\partial D \subset \cgraph$. Henceforth we invoke this 
convention tacitly.

Cochains and chains pair by evaluation: $\langle c, \tau \rangle = \sum_i c(\sigma_i)$ for a $k$-cochain $c$ and $k$-chain $\tau = \sum_i \sigma_i$.

\begin{theorem}[Stokes]
\label{thm:stokes}
For any $k$-cochain $c \in C^k(X; \Z_2)$ and any $(k+1)$-chain $\tau \in C_{k+1}(X; \Z_2)$,
\[
\int_\tau \coboundary c \;=\; \int_{\partial\tau} c .
\]
\end{theorem}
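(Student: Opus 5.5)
The plan is to reduce everything to the definition of the coboundary as the transpose of the boundary map, using bilinearity of the pairing $\langle\cdot,\cdot\rangle$. Interpret $\int_\tau c$ as the pairing $\langle c,\tau\rangle$ introduced just above. Both sides of the claimed identity are $\Z_2$-linear in $\tau$, because $\partial$ is linear and the pairing is linear in each slot. Since $C_{k+1}(X;\Z_2)$ is spanned by the individual $(k+1)$-cells, it therefore suffices to verify the identity when $\tau=\sigma$ is a single $(k+1)$-cell.

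For a single cell $\sigma$, write its cellular boundary over $\Z_2$ as $\partial\sigma=\sum_j \sigma_j$. Here the $\sigma_j$ are the $k$-cells appearing in the boundary of $\sigma$ with odd incidence number; mod $2$, even multiplicities drop out. The coboundary is defined cell by cell via $(\coboundary c)(\sigma)=\sum_j c(\sigma_j)$. This is the definition the paper uses in degree $0$, where $(\coboundary x)(e)=x(u)+x(v)$, and in degree $1$, where $\mu(f)=\sum_i\tilde\coupling(e_i)$. So the left side equals $\langle \coboundary c,\sigma\rangle=(\coboundary c)(\sigma)=\sum_j c(\sigma_j)=\langle c,\partial\sigma\rangle$, which is the right side. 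Summing over the cells of a general $\tau=\sum_i\sigma_i$ then gives
\[
\langle\coboundary c,\tau\rangle=\sum_i\langle c,\partial\sigma_i\rangle=\Bigl\langle c,\sum_i\partial\sigma_i\Bigr\rangle=\langle c,\partial\tau\rangle .
\]
Note that $k$-cells shared by two cells of $\tau$ cancel in $\partial\tau$ exactly as they cancel in the sum, because we work mod $2$. This matches the cancellation argument in Lemma~\ref{lem:extension-independence}; indeed that lemma is the special case $k=1$ with $c=\tilde\coupling$.

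The only real obstacle is bookkeeping. We must be sure the cellular boundary and coboundary are set up as adjoint to one another, that is, $\coboundary$ is defined as the dual of $\partial$ with the same incidence numbers reduced mod $2$. For a regular cell complex, as in the constructions of \S\ref{sec:coupling}, every incidence number is $0$ or $\pm1$, so over $\Z_2$ no orientation or sign issues arise. I would state this adjointness as the definition, with the paper's appendix supplying the background, and then the argument above is complete.
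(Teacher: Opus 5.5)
Your argument is correct and is essentially the paper's own. The paper gives no separate proof of this theorem; it treats the identity as immediate from $\coboundary$ being the transpose of $\partial$ (the step ``by definition of coboundary'' in \S\ref{sec:hierarchy}), together with the same mod-$2$ cancellation of shared interior cells used in the proof of Lemma~\ref{lem:extension-independence}. Your reduction to single cells by linearity, and your remark that the appendix's ``sum over $k$-cells in the boundary'' must be read with incidence multiplicities mod $2$, simply make that definitional step explicit.
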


Applied to curvature $\mu = \coboundary\tilde\coupling$ and a 2-chain $\tau$, Stokes asserts:
\[
\sum_{f \subset \tau} \mu(f) \;=\; \sum_{e \in \partial\tau} \tilde\coupling(e).
\]
When $\partial\tau \subset \cgraph$, the right-hand side depends only on $\coupling|_\cgraph$: interior curvature is determined by boundary holonomy. On a closed surface, total curvature vanishes.

This is the paper's central principle: boundary inconsistency cannot dissolve -- it must localize as curvature somewhere. The $H^2$ constructions of later sections all manifest this as boundary cycles with nonzero holonomy forcing interior frustration.

\begin{remark}[Gauge perspective]
\label{rem:gauge}
The framework admits a gauge-theoretic interpretation. The coupling $\coupling$ is a discrete connection with curvature $\mu = \coboundary\coupling$ as field strength. A gauge transformation $x \mapsto x + \xi$ replaces $\coupling$ by the cohomologous $\coupling + \coboundary\xi$, preserving the cohomology class $[\coupling]$ and leaving $\mu$ invariant. This perspective becomes essential in \S\ref{sec:flux}, where bistable variables shift from vertices to edges: $\coupling$ becomes a potential, $\mu$ becomes prescribed flux, and the obstruction moves from $H^1$ to $H^2$.
\end{remark}

\section{A Cohomological Hierarchy}
\label{sec:hierarchy}

Cohomology does not begin at degree one. The group $H^0(\cgraph; \Z_2)$ 
controls the structure of solutions: if $x$ solves $\coboundary x = \coupling$, 
so does $x + 1$, and when solutions exist they come in pairs related by this 
global flip. For a connected constraint graph, $H^0(\cgraph; \Z_2) \cong \Z_2$ 
acts freely and transitively on the solution set, which therefore forms a 
principal homogeneous space with no distinguished element. A bistable image 
admitting consistent global readings thus has genuine indeterminacy: which 
reading is correct? Neither is privileged, and the viewer's perceptual 
system oscillates between them. This is \style{ambiguity}, the degree-zero 
phenomenon, and the Necker cube is its paradigm.

With the passage from $H^0$ to $H^1$, ambiguity gives way to obstruction, 
and the nature of the obstruction depends on its origin. A 
\style{relative} paradox arises when an otherwise consistent system is forced 
into contradiction by incompatible boundary conditions: the interior could 
accommodate either assignment, but not both at once. This is \style{conflict} -- 
a gradient of competing influences, formalized by the connecting homomorphism 
$\connecting: H^0(A) \to H^1(X, A)$, which promotes incompatible degree-zero 
boundary data to a degree-one interior obstruction. The Necker interval, 
with endpoints pinned to opposite orientations, is the paradigm.

An \style{absolute} paradox requires no external intervention. When some 
cycle in the constraint graph carries nonzero holonomy, $[\coupling] \neq 0$ 
in $H^1(\cgraph)$, and no global state exists regardless of boundary 
conditions. This is \style{impossibility}, intrinsic to the constraint 
structure. The odd gear ring is the paradigm.

The mechanism by which boundary data becomes interior obstruction is 
the connecting homomorphism $\connecting$ of relative cohomology. Given 
a cocycle $\alpha$ on $A$, one extends it arbitrarily to a cochain 
$\tilde\alpha$ on $X$ and takes the coboundary: 
$\connecting[\alpha] = [\coboundary\tilde\alpha]$. But this is precisely 
what Stokes computes. For any chain $\tau$ with $\partial\tau \subset A$:
\[
\langle \coboundary\tilde\alpha, \tau \rangle 
\;=\; \langle \tilde\alpha, \partial\tau \rangle
\;=\; \langle \alpha, \partial\tau \rangle,
\]
the middle equality by definition of coboundary, the last because 
$\tilde\alpha$ agrees with $\alpha$ on $A \supset \partial\tau$. The 
interior integral on the left is determined by the boundary integral 
on the right. This is discrete Stokes, and it shows that 
$\coboundary\tilde\alpha$ depends only on $\alpha$, not on the 
extension --- exactly what makes $\connecting$ well-defined.

The connecting homomorphism $\delta^*$ {\em is} both Stokes and the obstruction: 
it measures precisely the failure of boundary data to extend inward. 
This mechanism is uniform across degrees, and we elevate it to a guiding principle:

\begin{quote}
\textbf{Stokes Principle.} \emph{Boundary inconsistency becomes interior 
obstruction. At each degree $k$, a class in $H^k(A)$ that fails to extend 
inward promotes via $\connecting$ to a class in $H^{k+1}(X, A)$, computed 
as the coboundary of any extension.} 
\end{quote}

The connecting homomorphism and discrete Stokes are two views of the same machine
operating uniformly across degrees. At $k = 0$: disjoint  
components of a Necker field pinned to opposite states fixes  
data in $H^0(A)$ (as in Figure \ref{fig:necker-field-basic}); 
the connecting homomorphism $\connecting: H^0(A) \to H^1(X, A)$ 
sends this to a relative $H^1$ obstruction -- conflict in the interior. 
At $k = 1$: a cycle $\gamma$ with nonzero holonomy bounding a disc $D$ 
represents a class in $H^1(\gamma)$; the connecting homomorphism 
$\connecting: H^1(\gamma) \to H^2(D, \gamma)$ sends this to a 
relative $H^2$ obstruction --- curvature that must localize somewhere 
inside $D$. The Stokes formula
\[
\sum_{f \subset D} \mu(f) \;=\; \sum_{e \in \gamma} \coupling(e)
\]
gives the computation directly: boundary holonomy on the right forces 
interior frustration on the left. The obstruction cannot dissolve; it 
must go somewhere.

The same mechanism operates one degree higher still, but accessing 
absolute $H^2$ classes requires moving beyond relative cohomology. Two 
routes present themselves. The \style{cup product} 
$\alpha \smile \beta \in H^2(X)$ measures interference between 
independent $H^1$ classes; when nonzero, their Poincar\'e-dual seams 
must intersect regardless of cocycle representatives. Alternatively, 
\style{degree-shifting} places bistable variables on edges rather than 
vertices, with prescribed flux playing the role of coupling; the 
obstruction to finding a potential then lives directly in $H^2$.

The full hierarchy, refined by the relative/absolute distinction at each degree, is:
\[
\begin{array}{ccccccccc}
H^0(A) & \xrightarrow{\;\;\connecting\;\;} & H^1(X,A) & \longrightarrow & H^1(X) & \xrightarrow{\;\;\connecting\;\;} & H^2(X,A) & \longrightarrow & H^2(X) \\[1.5ex]
\textit{ambiguity} && \textit{conflict} && \textit{impossibility} && \textit{curvature} && \textit{inaccessibility}
\end{array}
\]
The arrows labeled $\connecting$ are the Stokes transitions: boundary data at degree $k$ promoting to interior obstruction at degree $k+1$. At each degree, the gap separates boundary-forced (relative) from intrinsic (absolute) phenomena.\footnote{The spaces $X$ and $A$ are instantiated differently at each level: for $H^0/H^1$, $X$ is the constraint graph $\cgraph$ with boundary vertices $A$; for $H^1/H^2$, $X$ is the ambient 2-complex with $A = \partial D \subset \cgraph$.} A relative obstruction becomes absolute when the bounding cycle is not itself a boundary in the ambient space -- when the topology that sourced the boundary data is intrinsic rather than imposed.

What changes across this hierarchy is the experiential character of the obstruction. \emph{Ambiguity} ($H^0$) is the coexistence of multiple valid readings with nothing to choose between them. \emph{Conflict} (relative $H^1$) is the gradient of competing boundary forces in an otherwise consistent system -- the visual tension in the Necker interval, where cubes near each pinned end are strongly determined while central cubes fade to wireframe indeterminacy. \emph{Impossibility} (absolute $H^1$) is intrinsic locking, requiring no external intervention -- the odd gear ring that cannot turn. \emph{Curvature} (relative $H^2$) is localized frustration forced by boundary holonomy: defects that can redistribute among faces but cannot vanish, conserved by a discrete Gauss law. \emph{Inaccessibility} (absolute $H^2$) is the partition of configurations into sectors by a topological invariant -- free local motion that nonetheless cannot reach certain global states.

\section{Relative $H^1$: Conflict}
\label{sec:relH^1}

Paradox can be bootstrapped from ambiguity when incompatible boundary conditions are imposed on an otherwise consistent system. The result is \style{conflict}: a gradient of competing influences that the interior cannot resolve. The long exact sequence of a pair $(X, A)$ captures this precisely: the connecting homomorphism $\connecting: H^0(A) \to H^1(X, A)$ promotes degree-zero boundary data to a degree-one interior obstruction.

The mechanism is always the same. Take a contractible domain $X$ with trivial coupling, choose a subcomplex $A$ with two connected components, and pin distinct components to different states. Any global state restricts to the \emph{same} state on all components, so boundary data assigning different states to different components lies outside the image of restriction, yielding a nontrivial class in $H^1(X, A)$. The richness lies not in the algebra -- which is always $\Z_2$ -- but in the geometry of how the resulting conflict distributes through space.

\subsection{Necker cube fields}
\label{sec:necker-boundary}

The canonical example, introduced in \cite{GhristCooperband2025Obstructions}, is the \style{Necker interval}: a linear chain of Necker cubes with endpoints forced to opposite orientations (Figure~\ref{fig:necker-interval}). The base space $X = P_n$ is a path of $n+1$ vertices with boundary $A = \{v_0, v_n\}$, and the coupling $\coupling \equiv 0$ enforces agreement throughout. Since $X$ is contractible, $H^1(X) = 0$ and no absolute paradox exists. The long exact sequence collapses to
\[
0 \to H^0(X) \xrightarrow{\mathrm{res}} H^0(A) \xrightarrow{\connecting} 
H^1(X, A) \to 0.
\]
Here $H^0(X) \cong \Z_2$ (two uniform states) and $H^0(A) \cong \Z_2 \times \Z_2$ (four boundary assignments). A uniform state restricts to equal endpoints, so $\mathrm{im}(\mathrm{res}) = \{(0,0), (1,1)\}$, and exactness yields $H^1(X, A) \cong \Z_2$. The boundary data $\beta = (0, 1)$ -- left cube forward, right cube backward -- lies outside the image of restriction, certifying a nontrivial relative $H^1$ class.

\begin{figure}[htb]
    \centering
    \setlength{\fboxsep}{0pt}
   \fbox{\includegraphics[width=5.5in]{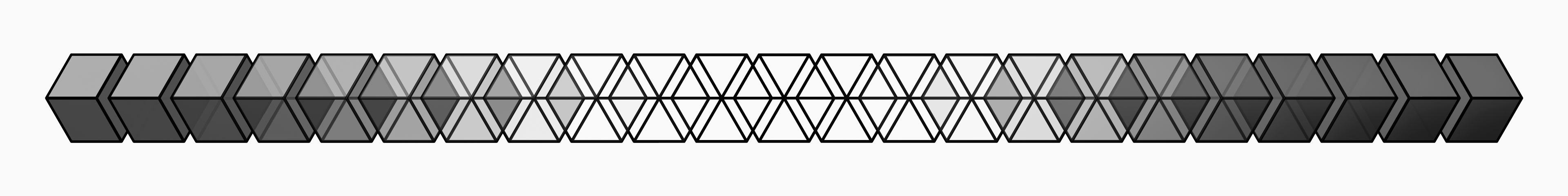}}
    \caption{\small The Necker interval: endpoints forced to opposite 
    orientations create a relative $H^1$ paradox. Cubes near the boundary 
    are strongly determined; central cubes fade to wireframe ambiguity.}
    \label{fig:necker-interval}
\end{figure}

Perceptually, conflict manifests as a gradient of ambiguity: cubes near the forced endpoints appear strongly determined, while central cubes fade to wireframe indeterminacy, caught between incompatible constraints propagating from both ends. This perceptual gradient is not predicted by the hard-constraint model -- which records only the existence of an obstruction -- but reflects a soft-constraint or energy-minimization process in visual perception. The term ``conflict'' captures this experiential character: unlike impossibility, which is uniform and immediate, conflict is distributed and graded.

\begin{figure}[htb]
    \centering
    \includegraphics[width=6.5in]{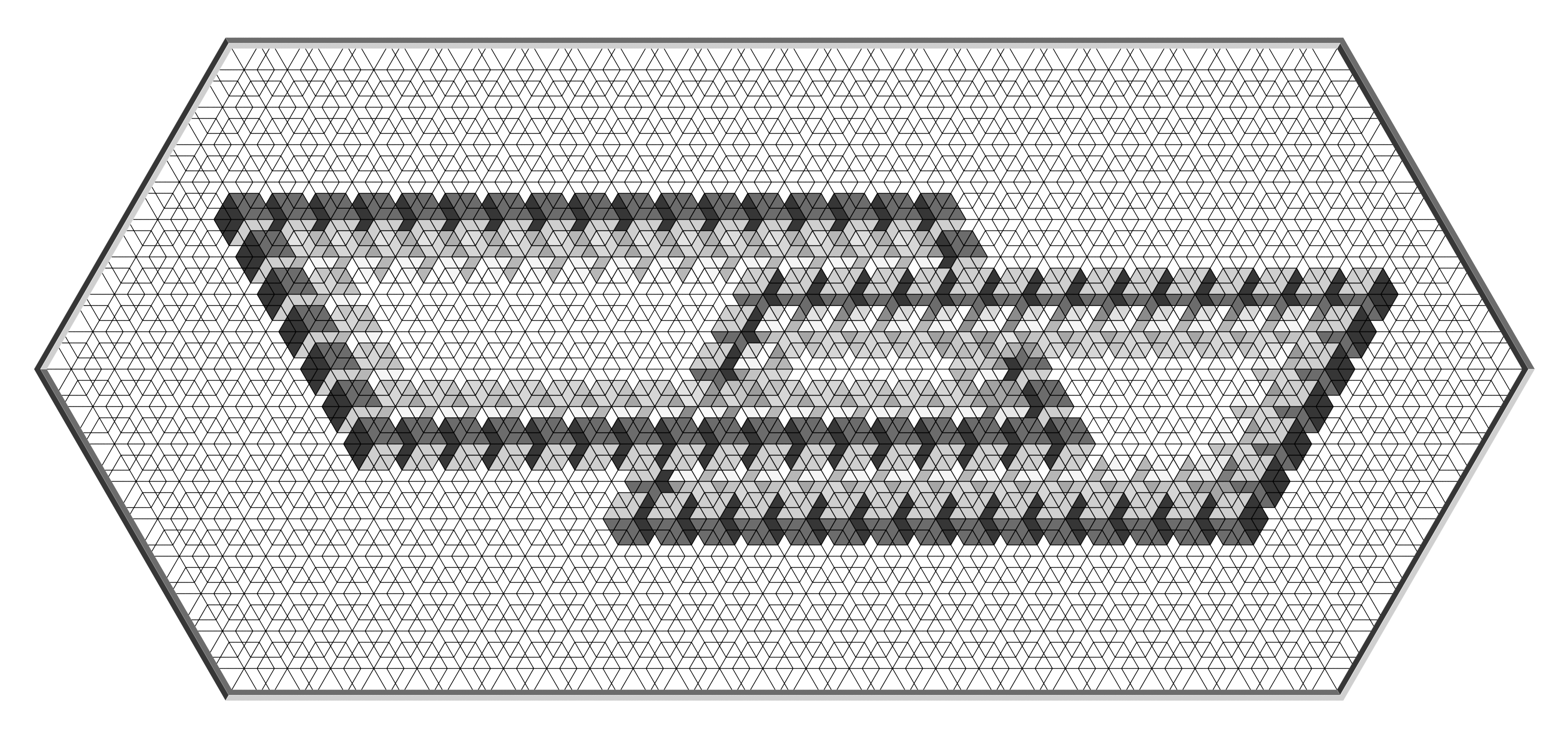}
    \hspace{1em}
    \caption{\small A three-dimensional Necker cube field with two linked loops forced to 
    opposite orientations. The conflict zone threads between the loops, its geometry shaped by the linking -- though the cohomology class $H^1(X,A) \cong \Z_2$ is insensitive to this.}
    \label{fig:necker-3d}
\end{figure}

The same computation generalizes freely. Any connected, contractible Necker field $X$ with $\coupling \equiv 0$ and any decomposition $A = A_1 \cup A_2$ into disjoint nonempty subsets produces $H^1(X, A) \cong \Z_2$: pinning $A_1$ and $A_2$ to opposite orientations yields the generator. In two dimensions, a planar grid with well-separated pinned regions creates a beautiful conflict zone, as in Figure~\ref{fig:necker-field-basic}. In three dimensions, the topology of the boundary components enriches the picture. Figure~\ref{fig:necker-3d} shows a cubical lattice with two \emph{linked} loops of cubes forced to opposite orientations. The cohomology group $H^1(X, A) \cong \Z_2$ is the same as for two unlinked loops or two isolated points -- linking is invisible at this algebraic level. The difference is geometric: how the conflict distributes through space. Cohomology detects the existence of the obstruction; finer invariants would be needed to capture how it is situated.

\subsection{Lozenge tilings with boundary forcing}
\label{sec:lozenge-boundary}

The same phenomenon appears in two-dimensional rhombic tilings with richer visual content. Consider a rectangular patch of lozenge tiling whose constraint graph $\cgraph$ -- degree-3 vertices connected across rhombus diagonals -- forms a portion of the hexagonal lattice. This graph is bipartite: vertices partition into two classes with all edges between classes. Assign convex to one class, concave to the other, and all opposition constraints are satisfied. Both checkerboard solutions exist; $[\coupling] = 0$ in $H^1(\cgraph)$, and there is no absolute paradox.

Now impose boundary conditions. Let $A = A_L \cup A_R$ consist of degree-3 vertices along the left and right edges of the patch. For a suitably chosen patch width, both $A_L$ and $A_R$ lie in the same bipartite class. Any global checkerboard solution assigns the same state to all vertices in that class, hence to both boundaries. Forcing $A_L$ convex while $A_R$ is concave yields boundary data outside the image of restriction, and the long exact sequence gives $H^1(X, A) \cong \Z_2$ with this data as generator.

The result is the Necker interval lifted to two dimensions: mathematically identical ($H^1(X,A) \cong \Z_2$), but visually a stepped-surface paradox rather than a depth-orientation one. The left boundary appears to jut toward the viewer, the right boundary recedes, and the interior fades to flat ambiguity. Figure~\ref{fig:lozenge-boundary} shows the result. 

\begin{figure}[htb]
    \centering
    \setlength{\fboxsep}{0pt}
    \fbox{\includegraphics[width=6.5in]{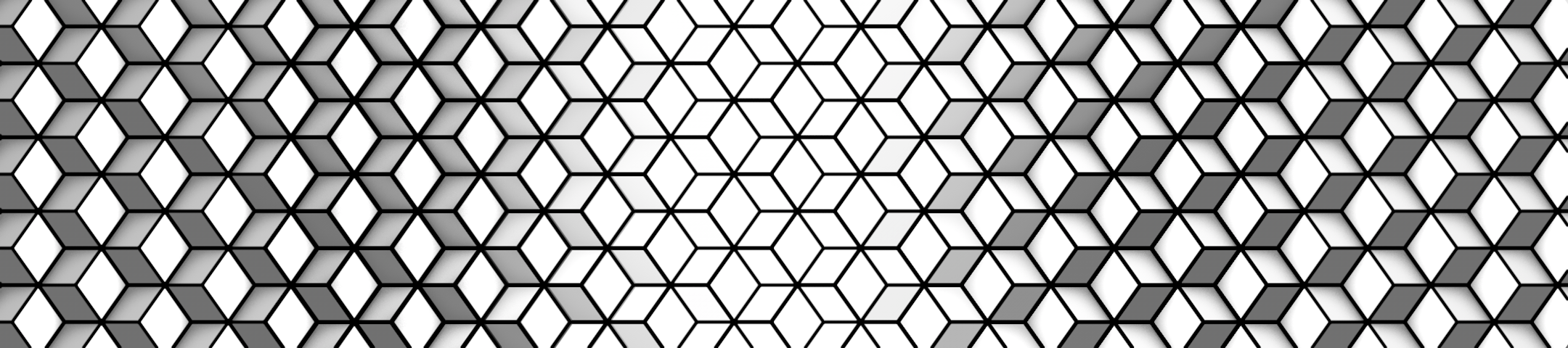}}
    \caption{\small A lozenge tiling patch with left boundary forced to one 
    orientation and right boundary forced to the 
    opposite orientation via shading; {\em cf.} Figure \ref{fig:necker-interval}.}
    \label{fig:lozenge-boundary}
\end{figure}

\section{Absolute $H^1$: Impossibility}
\label{sec:absH1}

The relative paradoxes of the preceding section arose from external intervention: pinning distant elements to incompatible states created conflict that the interior could not resolve. We now turn to \style{impossibility} -- paradoxes requiring no such intervention, where the coupling $\coupling$ itself forces nonzero holonomy around some cycle, making global states impossible regardless of boundary conditions.

The holonomy criterion (Criterion~\ref{crit:holonomy}) provides the test: $\coupling$ admits a global state if and only if every cycle $\gamma$ has zero holonomy $\sum_{e \in \gamma} \coupling(e) = 0$. When some cycle has nonzero holonomy, the class $[\coupling] \in H^1(\cgraph; \Z_2)$ is nontrivial and no solution exists. These are \style{absolute} paradoxes, intrinsic to the constraint structure.

\subsection{Odd gear cycles}
\label{sec:odd-gears}

The cleanest examples arise in gear systems with pure opposition constraints. For $n$ external gears in a ring, each meshing with its two neighbors, every edge carries $\coupling(e) = 1$ and the holonomy around the cycle is $n \bmod 2$. When $n$ is even, alternating spins satisfy all constraints; when $n$ is odd, the system locks. What we observed as kinematics in \S\ref{sec:gears} we now recognize as a nontrivial $H^1$ class.

The picture grows richer on surfaces. Consider a rectangular grid of external gears on the flat torus $T^2$, built by identifying opposite edges of the grid (Figure~\ref{fig:gear-torus}). Each gear meshes with four neighbors; the constraint graph is the standard square lattice on the torus. When horizontal and vertical dimensions are both odd, both fundamental cycles carry nonzero holonomy, representing the class $[\coupling] = \alpha + \beta \in H^1(T^2; \Z_2) \cong \Z_2 \oplus \Z_2$, where $\alpha$ and $\beta$ are generators dual to horizontal and vertical cycles respectively. These are independent obstructions: resolving one (say, by cutting along a horizontal circle) leaves the other intact. This multiplicity will become significant when we examine cup products in \S\ref{sec:cup-product}.

\begin{figure}[htb]
    \centering
    \includegraphics[width = 6in]{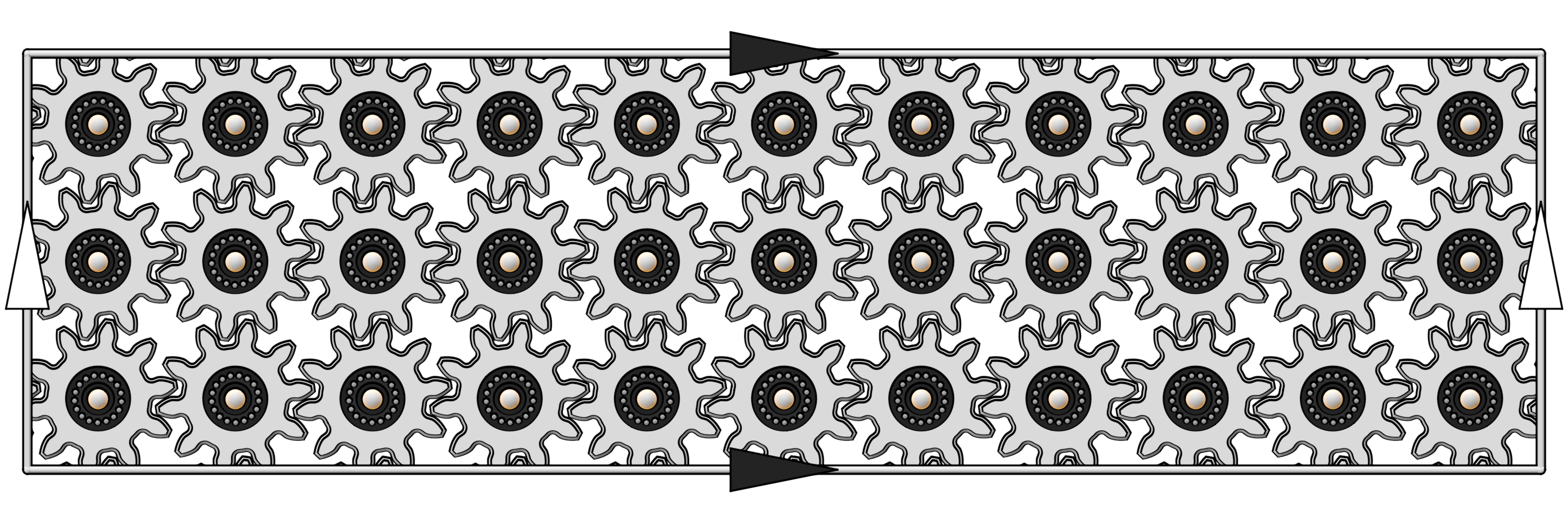}
    \caption{\small A planar gear mesh on a flat torus with odd numbers of gears along both longitude and meridian generates nonzero holonomy in both fundamental $H^1$ classes.}
    \label{fig:gear-torus}
\end{figure}

\subsection{Rhombic tilings and zonohedra}
\label{sec:tilings-zonohedra}

The P3 rosette, introduced in \S\ref{sec:stepped}, provides the canonical planar example. Five thick rhombi surround a central vertex, their degree-3 corners forming a 5-cycle in the constraint graph with $\coupling \equiv 1$ on every edge. The holonomy is $5 \equiv 1$, certifying a nontrivial $H^1$ class: no consistent assignment of convex and concave exists around the rosette. This is precisely the structure of Penrose's heptagonal staircase (Figure~\ref{fig:penrose-necker}[left]) at a different cycle length -- both are odd rings of bistable elements with opposition constraints.
Longer odd cycles present the same obstruction of $H^1$ defects: see Figure \ref{fig:P3-shading} for an example of local concave/convex assignments that cannot be continued globally.

\begin{figure}[htb]
    \centering
    \setlength{\fboxsep}{0pt}
   \fbox{\includegraphics[height=6cm]{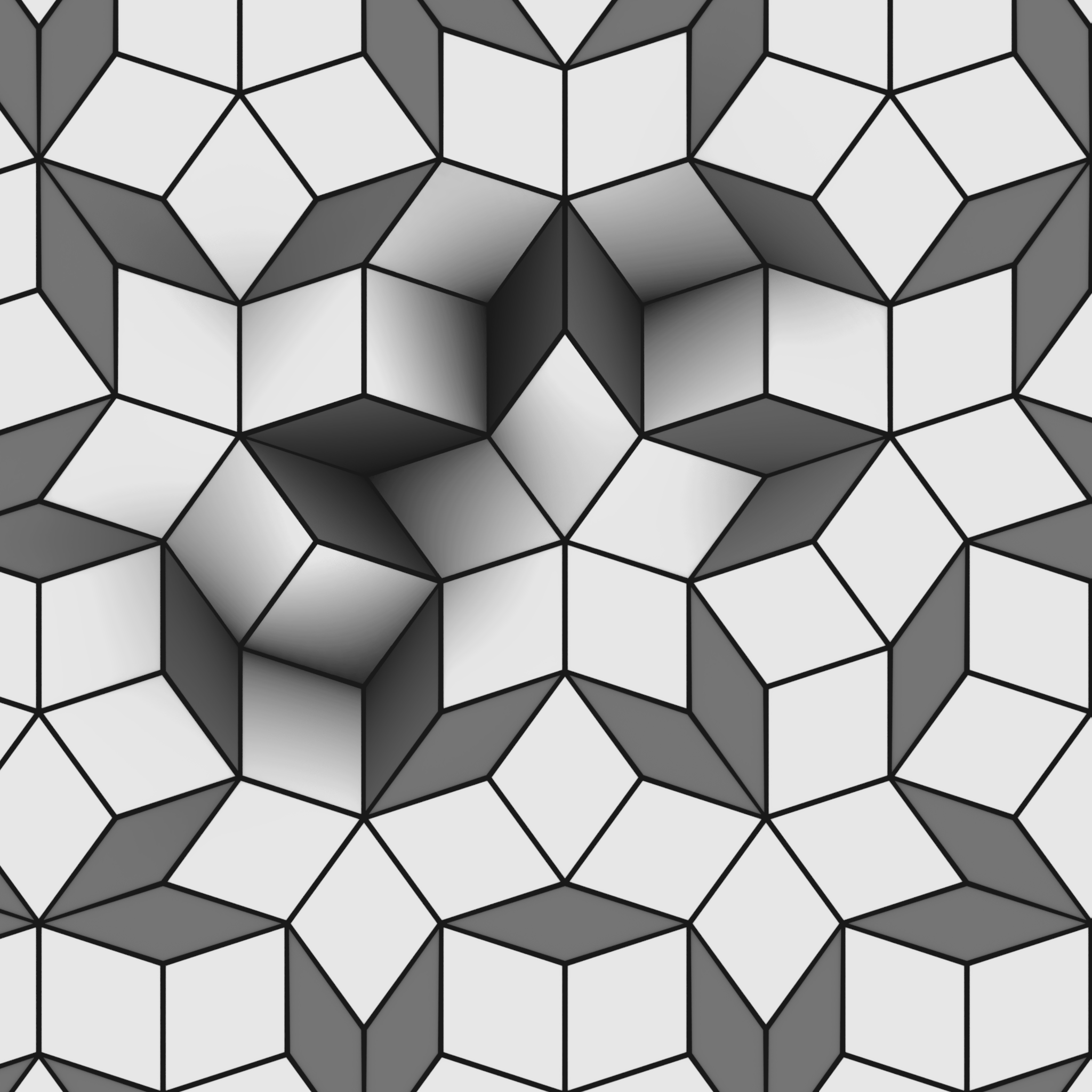}}
    \hspace{4em}
   \fbox{\includegraphics[height=6cm]{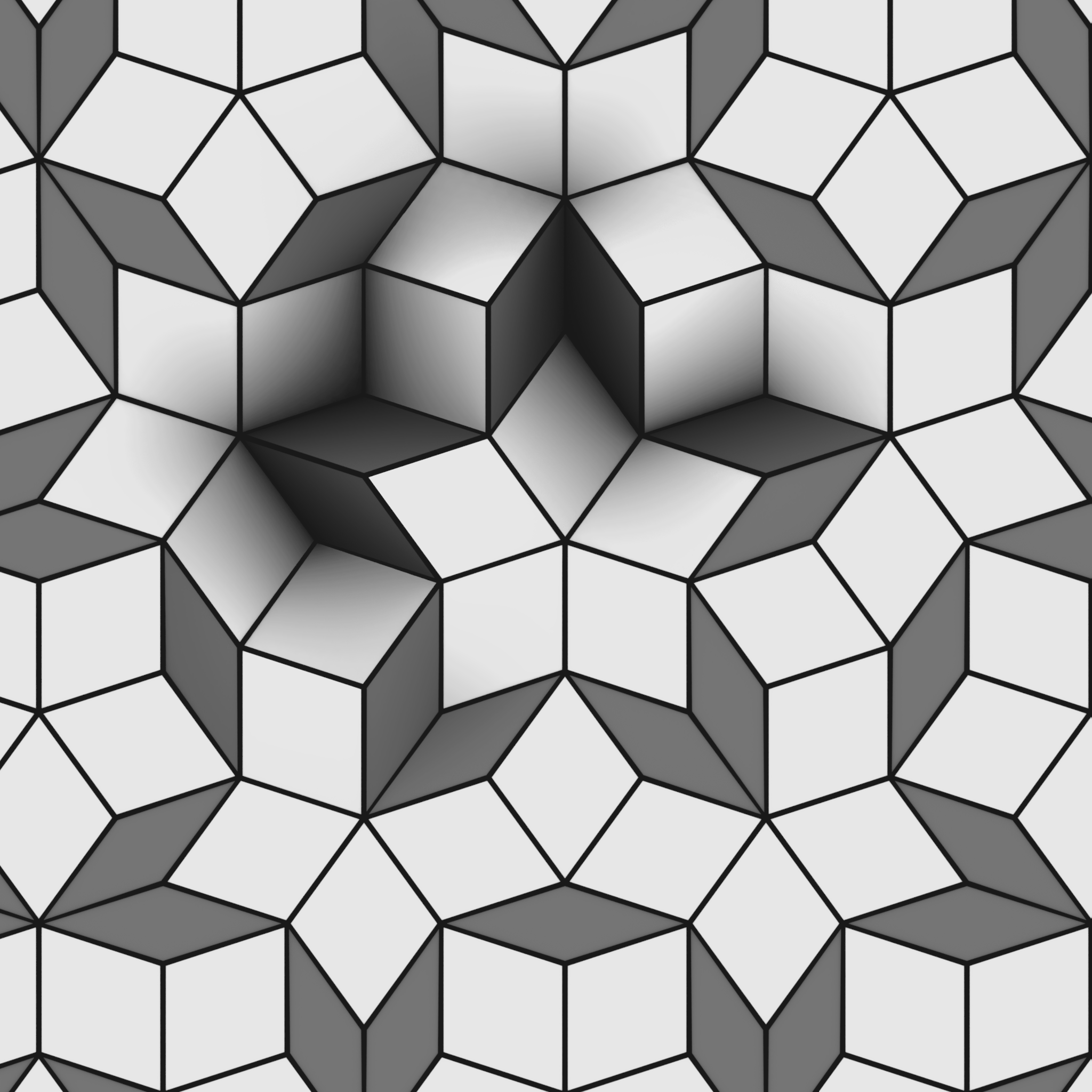}}
    \caption{\small Lifting corners in the P3 rhombic quasitiling along an odd cycle in $\cgraph$; the two lifts (indicated by shading) are locally consistent, but cannot be extended globally around the odd cycle due to the nonzero holonomy.}
    \label{fig:P3-shading}
\end{figure}

The same phenomenon transplants beautifully to closed surfaces via \style{rhombic zonohedra} -- convex polyhedra whose faces are all rhombi \cite{Coxeter1973RegularPolytopes}. The \style{rhombic triacontahedron} (RT30) has 30 golden rhombi arranged with icosahedral symmetry (Figure~\ref{fig:zonohedra}[left]). Its 20 degree-3 vertices sit at the 3-fold symmetry axes, and each of the 30 rhombic faces contributes one constraint edge -- the diagonal connecting its two degree-3 corners. The resulting constraint graph is the dodecahedral graph: 20 vertices, 30 edges, with twelve pentagonal faces surrounding the 5-fold axes. Each pentagon is a 5-cycle of opposition constraints with nonzero holonomy -- exactly the P3 rosette on the sphere.

\begin{figure}[htb]
    \centering
    \setlength{\fboxsep}{0pt}
   \fbox{\includegraphics[height=4cm]{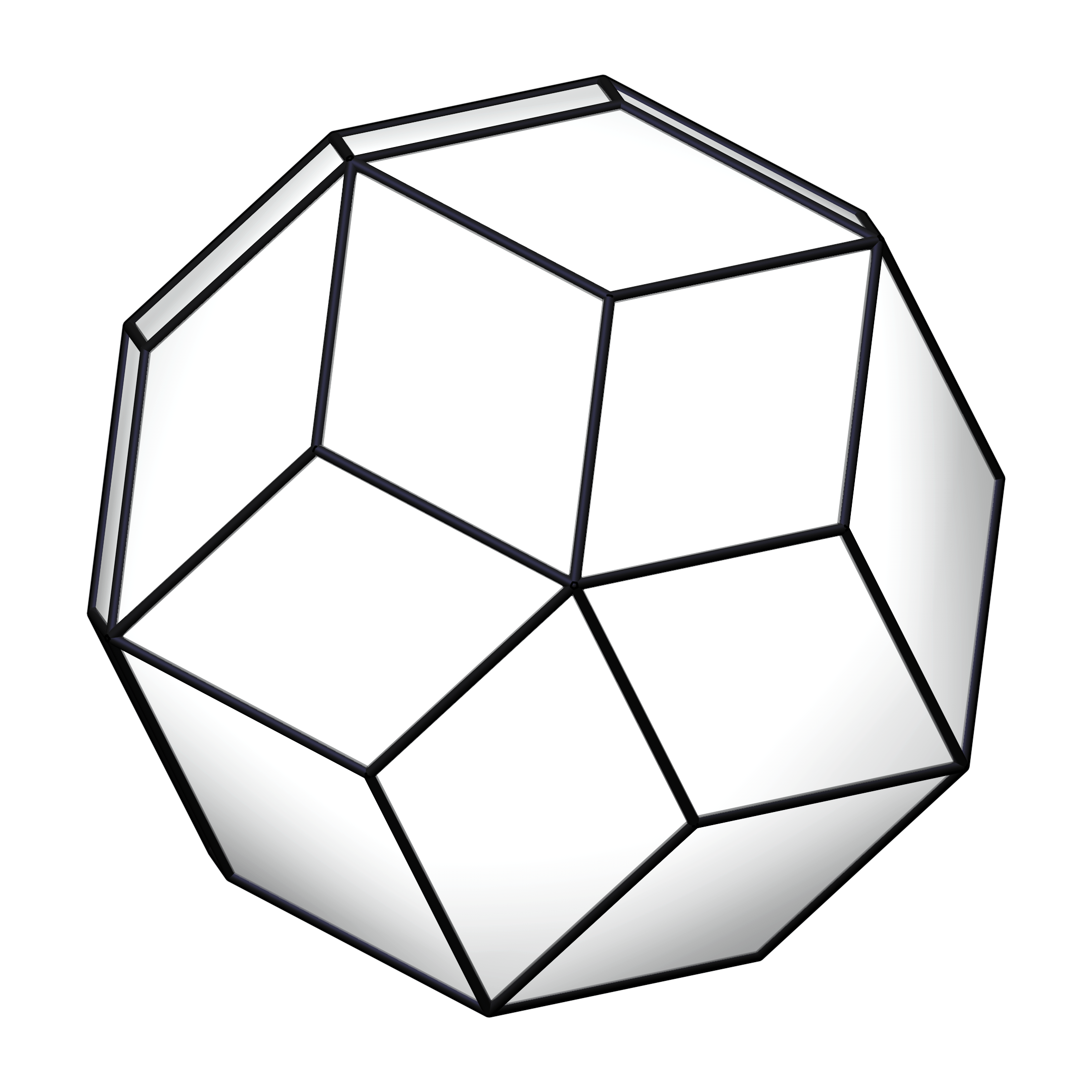}}
   \fbox{\includegraphics[height=4cm]{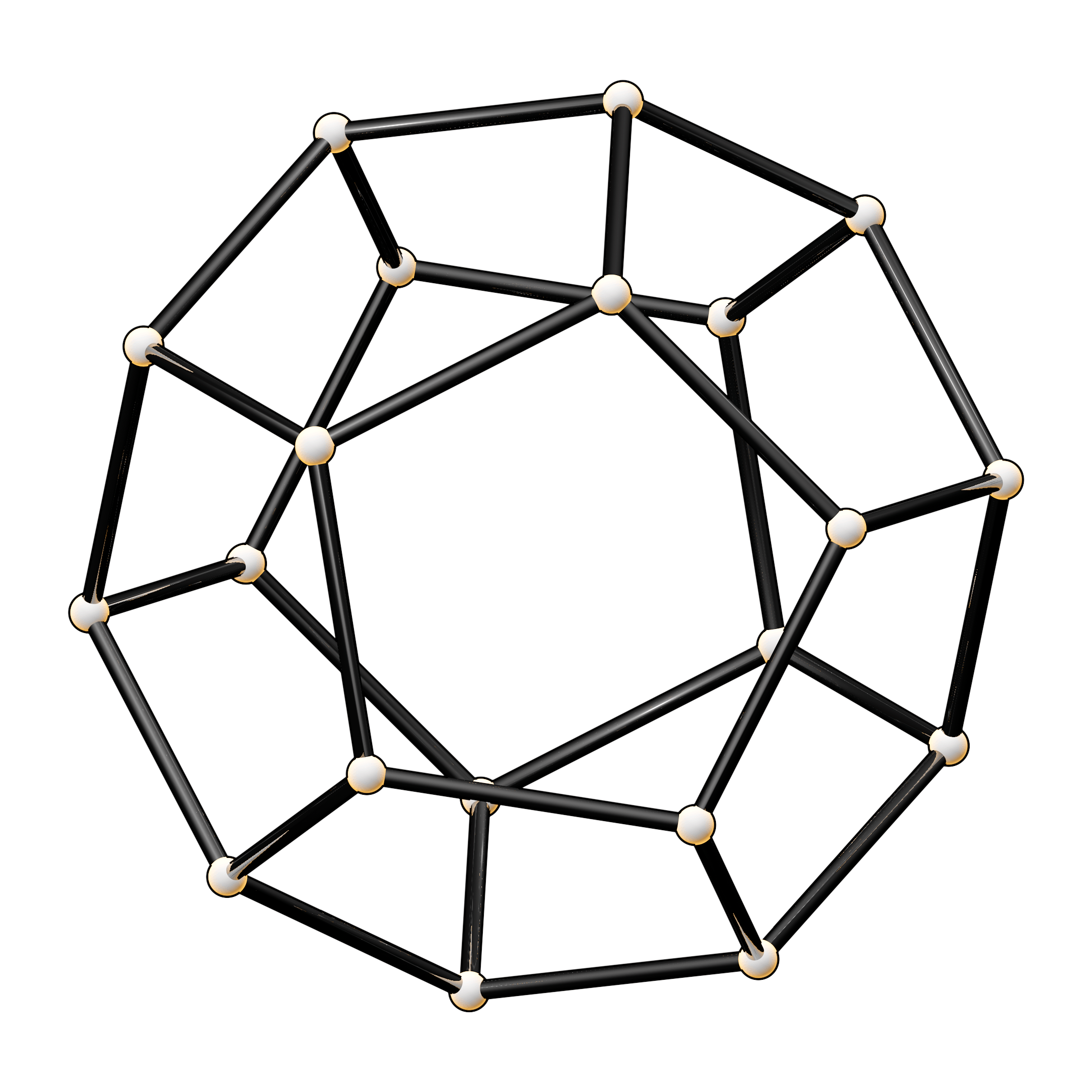}}
   \fbox{\includegraphics[height=4cm]{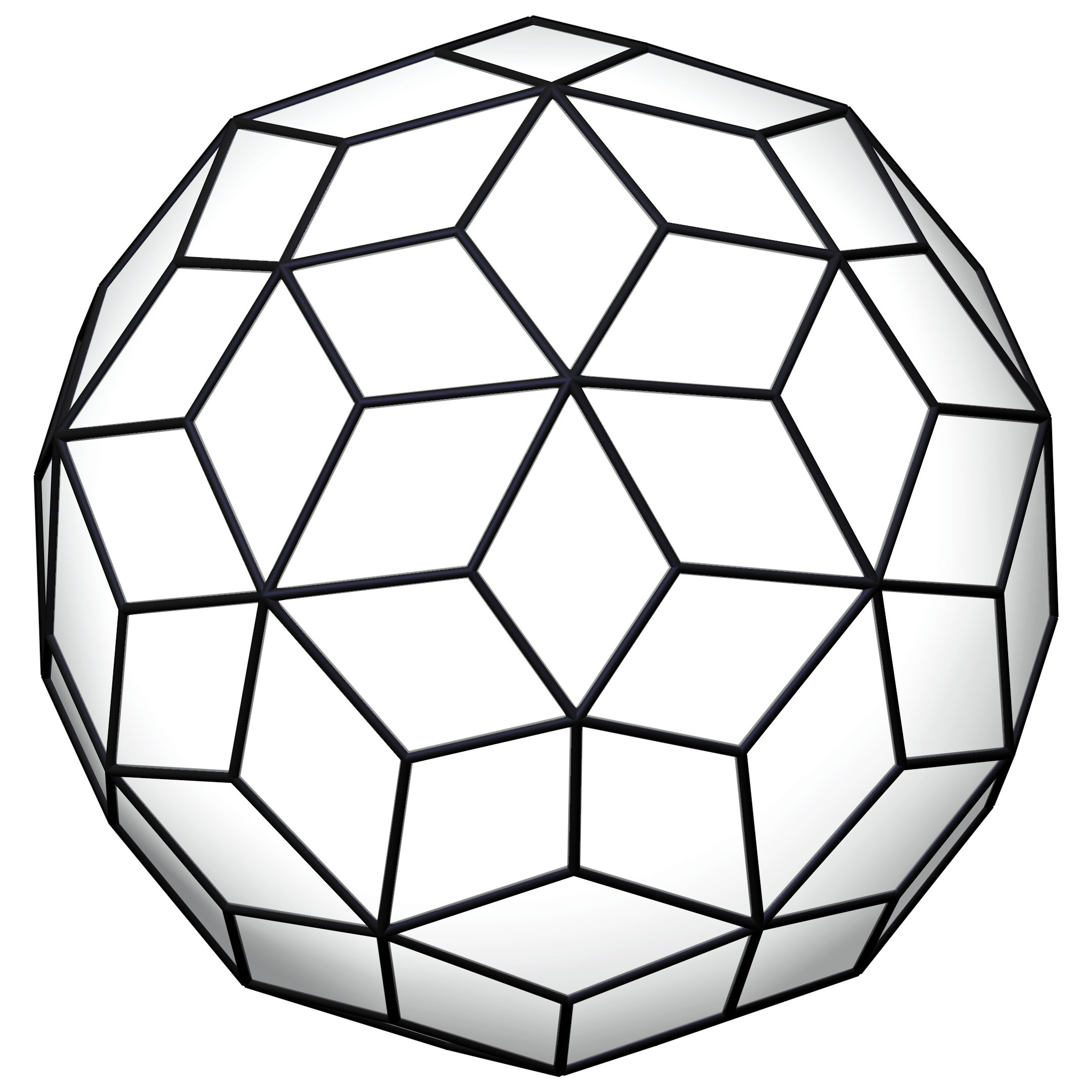}}
   \fbox{\includegraphics[height=4cm]{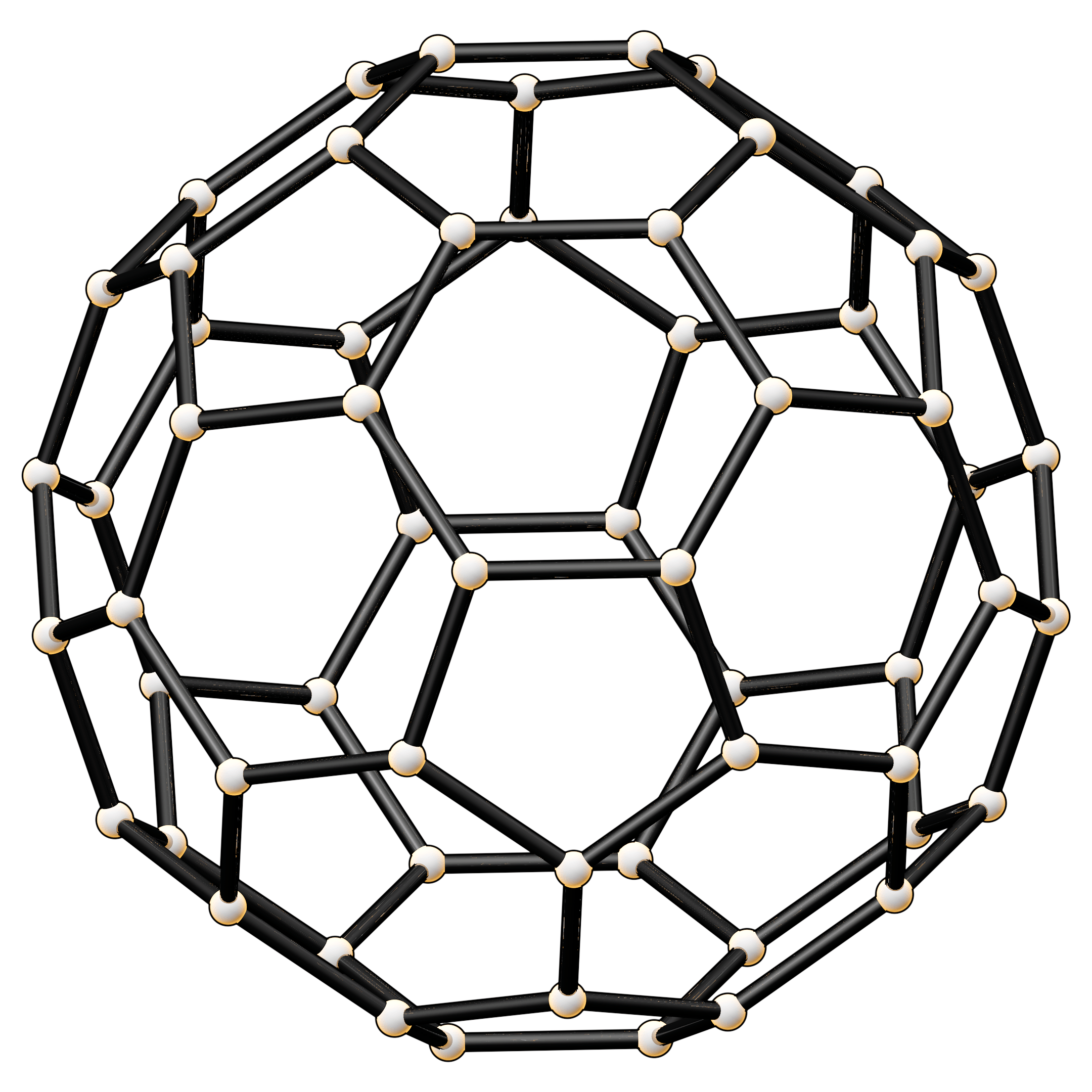}}
    \caption{\small [left] The rhombic triacontahedron RT30 with its 
    dodecahedral constraint graph contrasts with [right] the rhombic 
    enneacontahedron RE90 with its truncated-icosahedral constraint 
    graph; both have pentagonal rosettes but RE90 also has
    hexagonal cycles without holonomy.}
    \label{fig:zonohedra}
\end{figure}

The \style{rhombic enneacontahedron} (RE90), with 90 rhombic faces, has richer structure (Figure~\ref{fig:zonohedra}[right]). Its 60 degree-3 vertices and 90 constraint edges form the truncated icosahedral graph -- the ``soccer ball'' pattern with 12 pentagonal and 20 hexagonal faces. The pentagons yield frustrated 5-cycles; the hexagons admit consistent alternation. Local impossibility at the 5-fold axes coexists with local consistency at the 3-fold axes, yet no global assignment of convex and concave corners exists. The $H^1$ obstruction, concentrated at the pentagons, cannot be resolved by any choice on the hexagons.

\subsection{Twisted rings and the axis twist class}
\label{sec:twisted}

The preceding examples derive their holonomy from the coupling 
cochain $\coupling$ alone: opposition constraints along an odd cycle 
sum to $1 \in \Z_2$. But the coupling has been doing more work than 
it appears. To say that $\coupling(e) = 1$ -- that adjacent states 
must oppose -- presupposes a way to \emph{compare} the two state 
spaces: ``clockwise at $u$'' and ``clockwise at $v$'' must mean the 
same thing. For planar gears, a fixed normal to the plane provides 
this comparison canonically. But for spinning objects whose rotation 
axes vary in space, ``clockwise'' depends on a choice of axis 
orientation via the right-hand rule, and when these axes rotate 
gradually around a loop, the naming of states can reverse upon 
return. The coupling $\coupling(e)$ encodes the \emph{total} 
comparison -- both the mechanical constraint (agree or oppose) and 
the identification of labels (which depends on the relative geometry 
of the two axes). At the cochain level, these contributions are 
entangled: decomposing $\coupling$ into a ``constraint part'' and a 
``frame part'' requires a gauge choice (a consistent assignment of 
axis orientations) that may not exist globally.

At the level of cohomology classes, however, a clean and canonical 
decomposition emerges. Define the \style{twist class} 
$\omega(\gamma) \in H^1(\gamma; \Z_2)$ of the axis field around a 
cycle $\gamma$ as follows: $\omega(\gamma) = 0$ if one can 
consistently orient all the rotation axes around $\gamma$ (choosing 
``up'' at each vertex so that the orientation varies continuously), 
and $\omega(\gamma) = 1$ if one 
cannot.\footnote{The twist class has a 
name: it is the first \emph{Stiefel-Whitney class} $w_1(L)$ of the line bundle 
$L \to \gamma$ spanned by the unoriented rotation axes. When axes 
are encoded by a map $f: \gamma \to \mathbb{RP}^2$, one has 
$\omega = f^*(w_1)$ where $w_1$ generates 
$H^1(\mathbb{RP}^2; \Z_2)$. The reader unfamiliar with 
characteristic classes loses nothing by treating $\omega$ as what it 
is: a parity check on axis orientability.}
This is the same kind of parity test the reader has already seen in 
the holonomy criterion -- applied now to axis orientations rather 
than spin states.

Let $[\coupling_0]$ denote the \style{flat-frame class}: the 
cohomology class the coupling would represent if all axes were 
parallel. For pure agreement, $[\coupling_0] = 0$; for pure 
opposition on an $n$-cycle, $[\coupling_0] = n \bmod 2$. The 
total holonomy decomposes as
\[
[\coupling] \;=\; [\coupling_0] \;+\; \omega \quad \in H^1(\gamma; \Z_2).
\]
Both terms are well-defined cohomology classes -- gauge-invariant -- 
even though no canonical cochain-level decomposition of $\coupling$ 
exists. This is the virtue of working with classes rather than 
representatives: the ambiguity in distributing holonomy between 
constraint and frame disappears upon passage to $H^1$.

\begin{figure}[htb]
    \centering
    \setlength{\fboxsep}{0pt}
    \fbox{\includegraphics[width=3.15in]{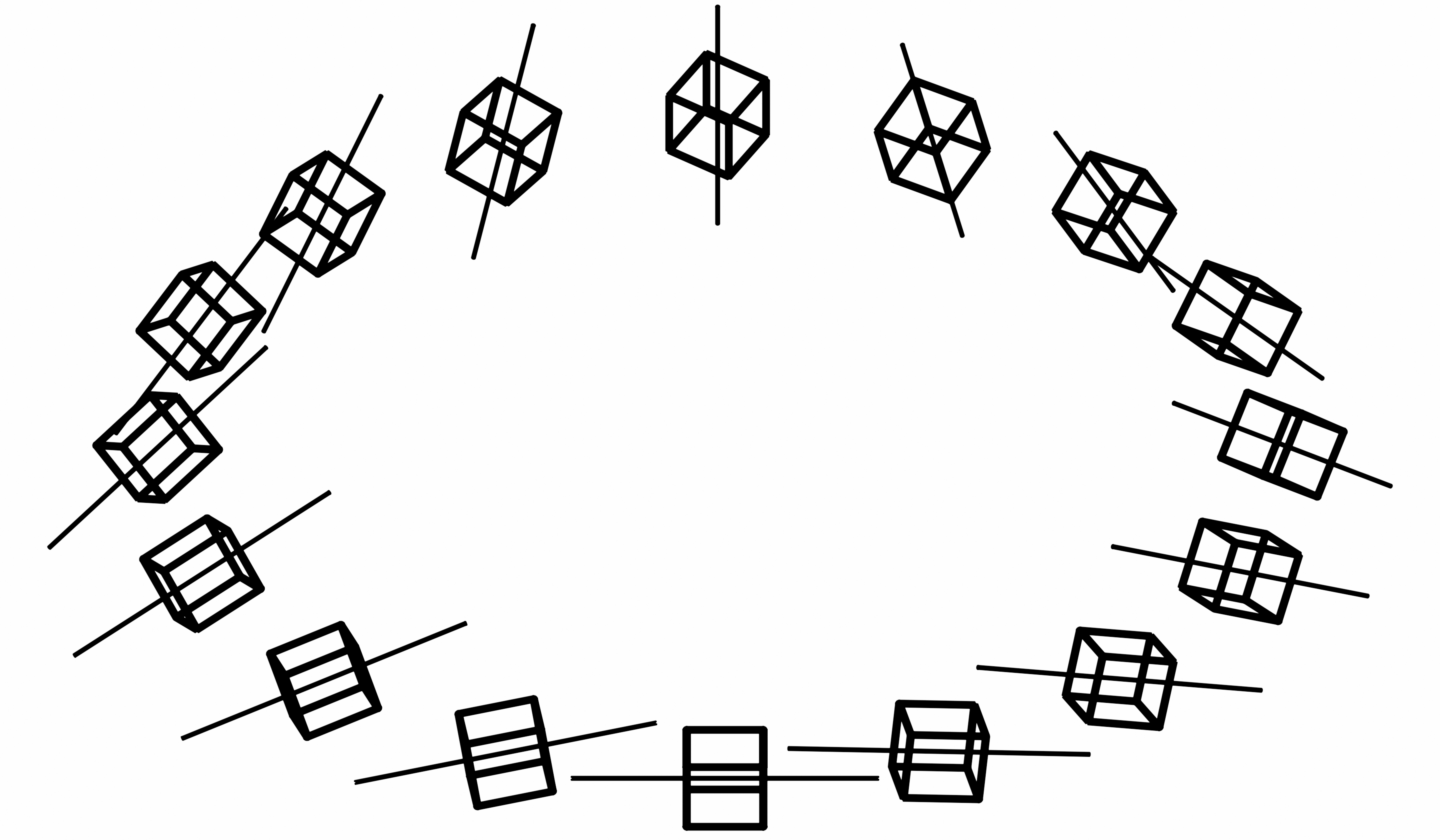}}
    \fbox{\includegraphics[width=3.15in]{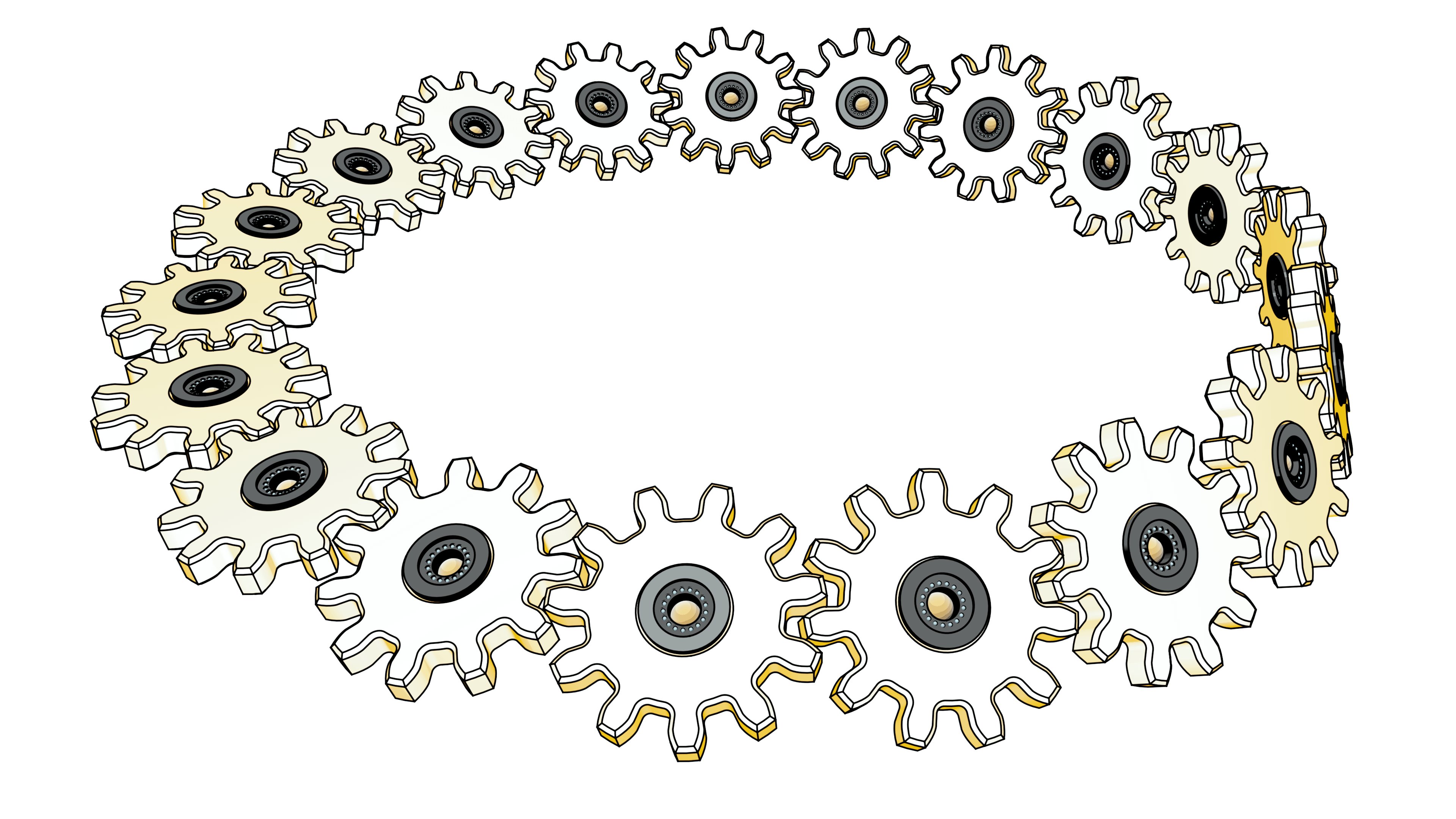}}
    \caption{\small Two twisted systems: [left] the spinning Necker ring is 
    a cycle of Necker cubes, each spinning about a local axis that rotates 
    gradually around the ring, completing a half-turn over one circuit; [right] the
    gear equivalent is the well-known M\"obius gear system \cite{Hoover2006}, 
    in which an odd number of gears with a global twist has vanishing holonomy.}
    \label{fig:necker-ring}
\end{figure}

Consider both coupling types on an $n$-cycle whose axis field 
completes a half-turn ($\omega = 1$):

\emph{Agreement coupling} ($[\coupling_0] = 0$): Necker cubes 
spinning about axes that rotate gradually around the ring 
(Figure~\ref{fig:necker-ring}[left]). With parallel axes the system 
is consistent; the half-twist contributes $\omega = 1$, giving 
$[\coupling] = 0 + 1 = 1$. The geometric twist \emph{creates} 
impossibility.

\emph{Opposition coupling} ($[\coupling_0] = n \bmod 2$): gears 
on a M\"obius strip (Figure~\ref{fig:necker-ring}[right]). On a 
planar ring, $[\coupling] = n \bmod 2$: odd rings lock. With the 
half-twist, $[\coupling] = n + 1 \bmod 2$: odd rings now spin 
freely, and even rings lock. The geometric twist \emph{resolves} 
impossibility.

The M\"obius gear ring is a known mechanical curiosity -- an odd 
number of meshing gears, free to rotate despite the usual parity 
obstruction \cite{Hoover2006}. The explanation is now immediate: the 
twist class $\omega = 1$ contributes a unit of holonomy that cancels 
the combinatorial parity of the opposition constraints. The same 
$\omega$, via the same addition in $H^1$, creates impossibility for 
agreement coupling and resolves it for opposition coupling.

\begin{remark}
The Necker depth bistability -- forward-facing or backward-facing 
interpretation -- is a second $\Z_2$ variable on each spinning cube, 
independent of spin direction. The depth labels form a trivial local 
system: one can maintain a consistent depth reading throughout the 
ring regardless of axis geometry. The spin paradox lives entirely in 
the rotation, not the depth: two $\Z_2$ structures coexist on the 
same geometric substrate, one with $\omega = 0$ and one with 
$\omega = 1$.
\end{remark}

\subsection{Torsors and double covers}
\label{sec:torsors}

The algebraic structure underlying these examples admits a geometric interpretation essential for the visualization techniques of \S\ref{sec:MoMA}. A coupling $\coupling \in C^1(\cgraph; \Z_2)$ on a connected graph defines a \style{double cover}: take two copies of each vertex, and for each edge $e = uv$, connect the copies according to $\coupling(e)$. If $\coupling(e) = 0$, connect $0_u \leftrightarrow 0_v$ and $1_u \leftrightarrow 1_v$ (agreement); if $\coupling(e) = 1$, connect $0_u \leftrightarrow 1_v$ and $1_u \leftrightarrow 0_v$ (opposition).

The resulting space is the total space of a $\Z_2$-torsor over $\cgraph$. When $[\coupling] = 0$, this space is disconnected -- two disjoint copies of $\cgraph$ -- and choosing a component amounts to choosing a global state. When $[\coupling] \neq 0$, the total space is connected: a twisted cover where traversing any odd-holonomy cycle returns you to the opposite sheet. No consistent choice of sheet is possible; no global section exists.
This construction realizes a fundamental classification: 
\begin{quote}
    $\Z_2$-torsors over a connected complex $X$ correspond bijectively to classes in $H^1(X; \Z_2)$.
\end{quote}
The trivial class corresponds to the disconnected double cover $X \times \Z_2$ 
and nontrivial classes to connected covers. The precise statement appears as 
Theorem~\ref{thm:torsor-classification} in Appendix~\ref{sec:appendix-cohomology}.

\begin{remark}[Torsors on graphs versus complexes]
\label{rem:torsor-convention}
The double-cover construction defines a torsor over any graph $\cgraph$, requiring only the 1-skeleton. When $\cgraph = X^{(1)}$ is the full 1-skeleton of a 2-complex and $\coboundary\coupling = 0$ (flatness), the torsor extends over $X$. When $\coboundary\coupling \neq 0$, curvature obstructs this extension: we have a torsor over $X^{(1)}$ but no flat extension over $X$. This distinction becomes essential in \S\ref{sec:relH2}, where curvature defects are precisely the obstruction to extending a 1-skeleton torsor over 2-cells.
\end{remark}

For the circle $S^1$, the classification yields exactly two torsors: the trivial cover (two disjoint circles) and the connected double cover -- the boundary of a M\"obius band, wrapping twice around the base. This suggest that for an example such as an odd cycle of planar gears (defining a nontrivial torsor on the circle) there should be a way to ``see'' the torsor via a double cover. This perspective -- torsors as twisted coverings, monodromy measuring the failure to close after transport around a cycle -- provides the conceptual foundation for visualizing paradox through animation.

\section{The Method of Monodromic Apertures (MoMA)}
\label{sec:MoMA}

If one covers part of an odd gear ring with an opaque mask, leaving a window through which an interval of adjacent gears are visible, then within this window, the gears spin freely -- the constraint equation is solvable on any contractible subset. Now slide the window continuously around the ring, updating the displayed spins to maintain local consistency at each position (see Figure \ref{fig:single-aperture}[left]). After one complete circuit, the window returns to its starting position, but the displayed configuration has flipped: gears that initially appeared clockwise now appear counterclockwise. This flip is the monodromy of the nontrivial $\Z_2$-torsor over $S^1$, made visible through animation.

Static analysis of $\coboundary x = \coupling$ determines whether global sections exist: compute holonomy around each cycle, and if any is nonzero, the system is paradoxical. But this analysis has a limitation. The classification theorem (Theorem~\ref{thm:torsor-classification}) guarantees $|H^1(X; \Z_2)|$ distinct torsors over any space $X$ -- one per cohomology class. The coupling $\coupling$ selects one via $[\coupling]$; the others exist as mathematical objects, invisible to holonomy computation, unchosen by the physics.

The Method of Monodromic Apertures exhibits torsor structure directly, converting holonomy (an algebraic computation) into monodromy (a visible flip) after transport around a loop. A single aperture reveals the torsor selected by $\coupling$. To exhibit the ``hidden'' torsors corresponding to other cohomology classes requires multiple apertures subject to observational constraints, constructing new torsors over configuration space.

\subsection{Apertures and local sections}
\label{sec:apertures}

An \style{aperture} is a contractible viewing window $U \subset X$ through which we observe the bistable system. Since any $\Z_2$-torsor trivializes over a contractible set, the restriction $\torsor|_U \cong U \times \Z_2$ admits exactly two local sections -- two ways to assign a consistent state throughout the window. The aperture \style{displays} one of these sections: a choice of sheet in the trivial double cover over $U$.
As the aperture moves through $X$, the displayed section updates by parallel transport.

\begin{lemma}[Aperture monodromy]
\label{lem:aperture-monodromy}
Let $\gamma: [0,1] \to X$ be a closed loop along which an aperture moves, with initial display $s_0 \in \pi^{-1}(\gamma(0))$. Transport of $s_0$ along $\gamma$ returns to the same sheet if and only if the torsor has trivial monodromy along $\gamma$; otherwise the displayed state returns globally flipped.
\end{lemma}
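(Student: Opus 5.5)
The plan is to reduce the statement to the unique path-lifting property of the double cover constructed in \S\ref{sec:torsors}, together with the definition of monodromy.

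First, pass to combinatorics. Since $\gamma$ is a closed loop in a graph or complex, it can be homotoped, or simply read off, as a cyclic edge sequence $(v_0, v_1, \ldots, v_n = v_0)$. The aperture at each time is a contractible window $U_t$ containing $\gamma(t)$. By the triviality of the torsor over contractible sets, $\torsor|_{U_t} \cong U_t \times \Z_2$, so the display is one of exactly two local sections. The update rule, parallel transport, means the following. When the window moves from a position covering $v_i$ to one covering $v_{i+1}$, the overlap $U_t \cap U_{t'}$ is nonempty and (we will assume, or arrange by subdividing the motion) connected. The new section is then the unique one agreeing with the old section on the overlap. On the edge $v_iv_{i+1}$ this forces $x(v_{i+1}) = x(v_i) + \coupling(v_iv_{i+1})$. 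So the displayed state at the basepoint evolves exactly as a lift of $\gamma$ to the double cover, starting at $s_0$.

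Second, compute the endpoint of the lift. Summing the update rule around the loop gives
\[
x_{\mathrm{final}}(v_0) = s_0 + \sum_{i=0}^{n-1} \coupling(v_iv_{i+1}),
\]
so the displayed state returns to $s_0$ exactly when the holonomy of $\coupling$ along $\gamma$ vanishes. This is the same telescoping used in the proof of Criterion~\ref{crit:holonomy}. The monodromy of the torsor along $\gamma$ is, by definition, the deck transformation sending the start of a lift to its end, namely translation by this holonomy in $\Z_2$. Hence "same sheet" is equivalent to trivial monodromy.

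Third, obtain the word "globally" in the conclusion. On the final window $U_1 = U_0$, the returned section agrees with the initial one at $v_0$ or differs from it there. Since $U_0$ is contractible, hence connected, and $\torsor|_{U_0}$ has exactly two sections related by the free $\Z_2$ action, the two sections either coincide everywhere or differ everywhere. In the nontrivial case, then, the whole window is flipped, not merely the basepoint.

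The main obstacle is the first step: making the informal "update by parallel transport" precise. In particular, the transported section must be well defined and independent of how the window slides, which needs the overlaps of successive windows to be connected. I would handle this by discretizing the motion into finitely many steps with connected overlaps, for instance windows that are stars or intervals along consecutive vertices. Homotopy invariance of the resulting answer then follows because it depends only on the holonomy of $\coupling$ along $\gamma$.
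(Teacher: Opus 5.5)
Your proposal is correct and follows the paper's argument: the displayed section evolves as the unique lift of $\gamma$ to the double cover, and it closes up exactly when $\gamma$ lies in the kernel of the monodromy homomorphism $\pi_1(X)\to\Z_2$. You make explicit what the paper leaves to standard covering space theory: the discretized update rule $x(v_{i+1}) = x(v_i) + \coupling(v_iv_{i+1})$, the telescoping sum identifying the monodromy with the holonomy of $\coupling$ along $\gamma$, and the connectedness of the window, which is what makes the flip global.
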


\noindent
The proof is immediate from covering space theory: a loop lifts to a loop if and only if it lies in the kernel of the monodromy homomorphism $\pi_1(X) \to \Z_2$.

\subsection{Single apertures on odd cycles}
\label{sec:single-odd}

The opening example -- a sliding window on an odd gear ring -- instantiates this lemma. The torsor is nontrivial ($[\coupling] \neq 0$), so transport around the generating cycle returns the opposite sheet. The animation makes the covering space structure perceptible: what was algebraically certified as ``nonzero holonomy'' becomes visually manifest as ``the spins flipped.''

The same technique applies to odd rosettes in rhombic tilings (Figure~\ref{fig:single-aperture}). Slide an aperture around a P3 pentagonal rosette, displaying convex/concave assignments that satisfy local constraints. After one circuit, the displayed assignment has reversed. The static picture shows inconsistency; the animation exhibits the underlying torsor.

\begin{figure}[htb]
    \centering
    \setlength{\fboxsep}{0pt}
    \fbox{\includegraphics[height=6cm]{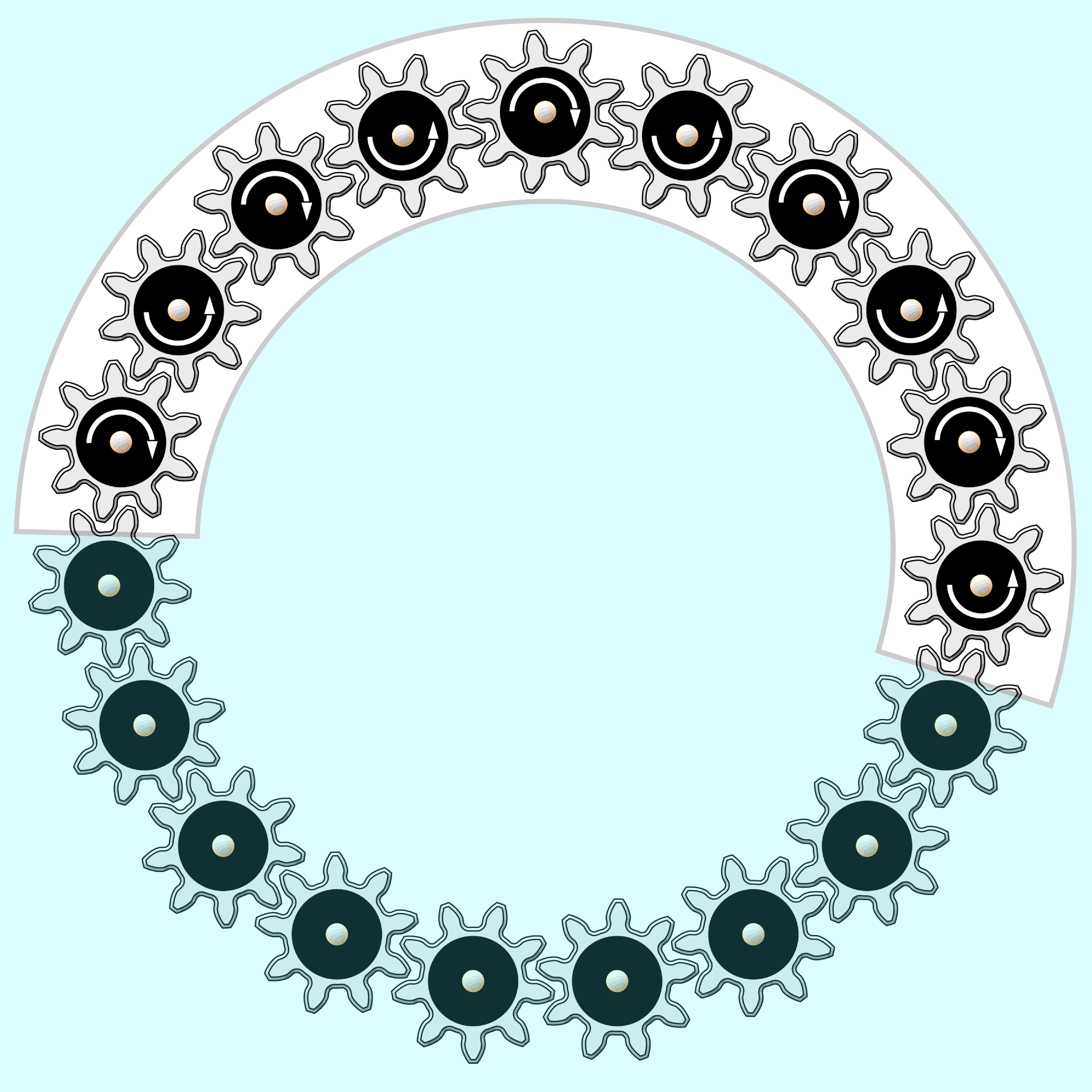}}
    \hspace{0.5em}
    \fbox{\includegraphics[height=6cm]{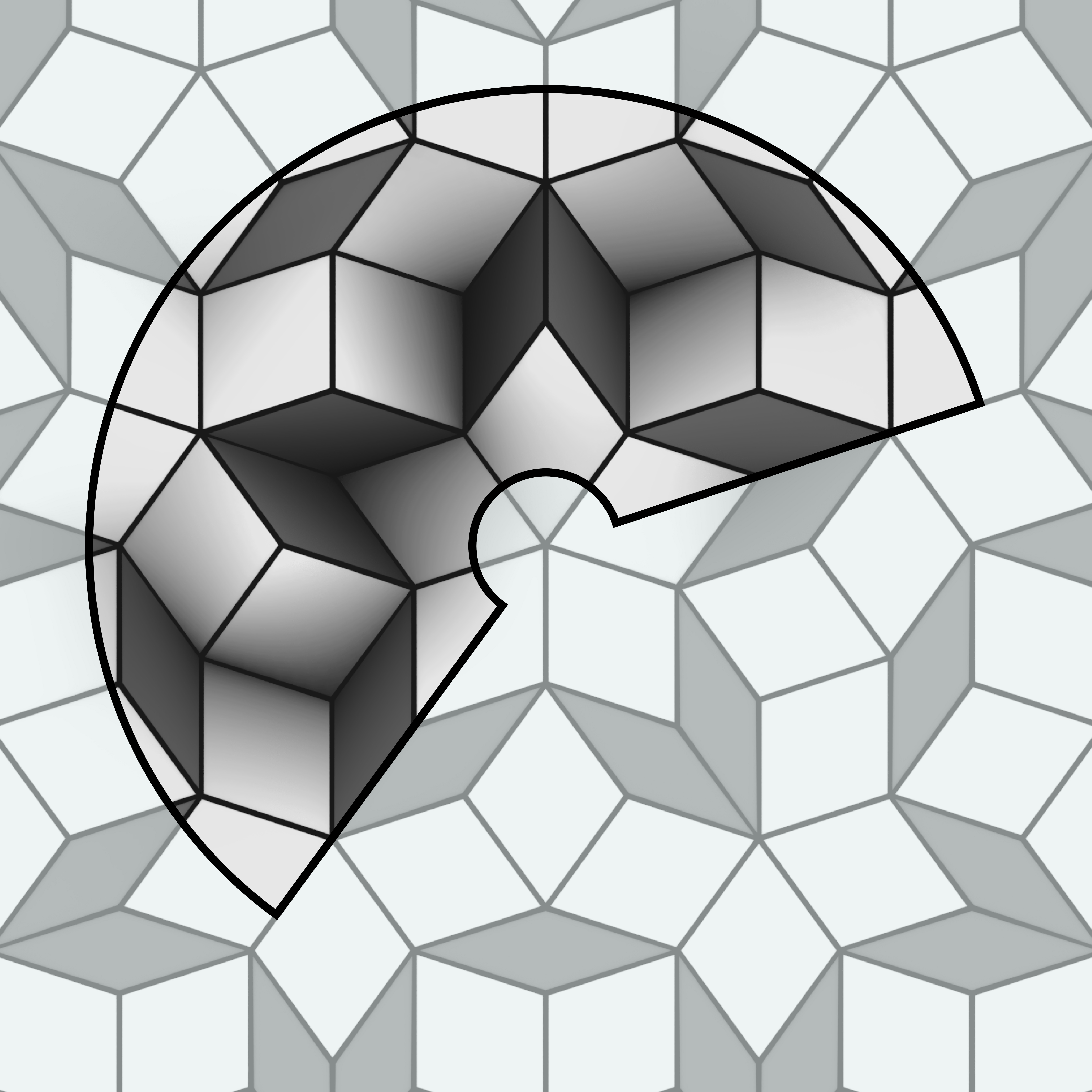}}
    \caption{\small Single-aperture MoMA on odd cycles. [Left] A window on an odd gear ring permits local spinning; sliding this aperture around the loop yields nontrivial monodromy. [Right] The same technique on a P3 quasitiling -- shading within in a window and rotating it one turn -- reveals the torsor structure of convex/concave assignments.}
    \label{fig:single-aperture}
\end{figure}

\subsection{Dual apertures and configuration space}
\label{sec:dual-apertures}

A single aperture on a cycle with trivial holonomy reveals nothing: the displayed section returns unchanged, reflecting the trivial monodromy of the torsor selected by $\coupling$. Yet $H^1(S^1; \Z_2) \cong \Z_2$ guarantees two distinct torsors over any loop. The coupling with zero holonomy selects the trivial one, but the nontrivial torsor exists as a mathematical object, unchosen by the physics.

To exhibit this hidden torsor, we employ two disjoint apertures subject to an \style{opposite-spin rule}: the displayed local sections must differ. If one aperture shows clockwise (or forward, or convex), the other shows counterclockwise (or backward, or concave). This rule constructs a new $\Z_2$-torsor -- not over $X$ itself, but over the \style{configuration space} of aperture pairs.

The relevant space is the unordered configuration space
\[
\conf_2(X) = \bigl\{ \{p, q\} \subset X : p \neq q \bigr\},
\]
the space of 2-element subsets. For $X = S^1$, this configuration space is a M\"obius band: the ordered configuration space $(S^1 \times S^1)\setminus \Delta$ is an open cylinder, and quotienting by the swap $(p,q) \mapsto (q,p)$ yields a M\"obius band with $\pi_1(\conf_2(S^1)) \cong \Z$.

At each configuration $\{p, q\}$, the opposite-spin rule demands complementary displays. Concretely, this amounts to choosing which aperture shows state 0 and which shows state 1 -- but since apertures are unlabeled, only the partition matters. There are exactly two such partitions, exchanged by the global $\Z_2$-action. The space of these choices, fibered over $\conf_2(X)$, defines a $\Z_2$-torsor $\tilde{\torsor} \to \conf_2(X)$.

\subsection{The exchange loop}
\label{sec:exchange}

The monodromy of the dual-aperture torsor is detected by the \style{exchange loop}: a path in $\conf_2(X)$ along which the two aperture positions smoothly interchange.

Let $\{p, q\}$ be an initial configuration with $p$ and $q$ connected by a path $\gamma$ in $X$. Slide $p$ along $\gamma$ toward $q$ while simultaneously sliding $q$ toward $p$, the two passing through each other's initial positions. Since the configuration space is unordered, the endpoint $\{q, p\} = \{p, q\}$: we have traced a closed loop in $\conf_2(X)$.

\begin{lemma}
\label{lem:exchange-monodromy}
The exchange loop has nontrivial monodromy in the dual-aperture torsor $\tilde{\torsor} \to \conf_2(X)$.
\end{lemma}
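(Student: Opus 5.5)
The plan is to compute the monodromy directly from the definition of $\tilde{\torsor}$ by transporting a chosen section along the exchange loop and tracking which aperture carries which state. The whole argument is bookkeeping of labels, organized through the ordered configuration space $\widetilde{\conf}_2(X) = (X\times X)\setminus\Delta$, which double covers $\conf_2(X)$.

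\textbf{Step 1: lift to ordered pairs.} Over $\widetilde{\conf}_2(X)$, a point of the fiber of $\tilde{\torsor}$ at $\{p,q\}$ is an assignment of complementary states to the two apertures. For an ordered pair $(p,q)$ I encode this by the single bit $s\in\Z_2$, namely the state shown by the \emph{first} aperture; the second then shows $s+1$. Pulled back to $\widetilde{\conf}_2(X)$, the torsor is therefore trivial: it is $\widetilde{\conf}_2(X)\times\Z_2$, because the bit $s$ is a continuous global coordinate. Transport along any path of ordered pairs keeps $s$ constant. This is because each aperture's display updates by parallel transport within its own contractible window, as in the setup of Lemma~\ref{lem:aperture-monodromy}, and under the opposite-spin rule this transport is tracked by the bit attached to each moving aperture.

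\textbf{Step 2: lift the exchange loop.} The exchange loop lifts to a path in $\widetilde{\conf}_2(X)$ from $(p,q)$ to $(q,p)$, obtained by following the moving points $p(t)$ and $q(t)$. By Step~1, the aperture that started at $p$ still shows $s$ on arrival at $q$, and the one that started at $q$ still shows $s+1$ on arrival at $p$. Read back in the unordered fiber over $\{p,q\}$, the point $p$ now shows $s+1$ and $q$ shows $s$. This is the other partition, i.e.\ the image of the initial section under the global $\Z_2$-action. So the section returns flipped, and the monodromy is nontrivial, exactly as in Lemma~\ref{lem:aperture-monodromy} but now over $\conf_2(X)$.

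\textbf{Main obstacle: intrinsic identification of the fibers.} The delicate point is to say what it means for the final state at location $p$ to be ``the same'' or ``opposite'' relative to the initial state at $p$. This comparison needs an identification of states at fixed locations, independent of the moving apertures. I would handle it by using the trivialization of the underlying state torsor on the windows. If the path $\gamma$ between $p$ and $q$ lies in a region where $\coupling$ has zero holonomy, which is the case of interest (Section~\ref{sec:dual-apertures} assumes trivial holonomy), then local states are globally comparable. In that case transport of each individual display is canonical, and the swap of labels is the only source of monodromy. Equivalently, $\tilde{\torsor}$ is the torsor associated to the orientation double cover $\widetilde{\conf}_2(X)\to\conf_2(X)$. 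The exchange loop does not lift to a closed loop there, since it connects the two distinct preimages $(p,q)$ and $(q,p)$. Hence, by the covering-space criterion quoted after Lemma~\ref{lem:aperture-monodromy}, its monodromy is nontrivial. I would present the argument in this covering-space form, with the label-tracking computation of Step~2 as the explicit check.
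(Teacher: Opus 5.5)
Your argument is correct and is essentially the paper's own proof. Parallel transport preserves each aperture's displayed state while the exchange swaps which aperture sits at which position, so the assignment of states to positions is transposed; your repackaging via the ordered-pair cover $(X\times X)\setminus\Delta \to \conf_2(X)$ (more accurately the ordering cover than the ``orientation'' cover) just makes explicit the global trivialization of the underlying torsor, over the region swept by both apertures, that the paper uses tacitly.
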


\begin{proof}
Parallel transport preserves each aperture's displayed state; the exchange swaps which aperture occupies which position; hence it transposes the assignment of states to positions.
\end{proof}

For a cycle ($X \simeq S^1$), the configuration space $\conf_2(X)$ is an open M\"obius band, homotopy equivalent to $S^1$. The exchange loop generates $H^1(\conf_2(X)) \cong \Z_2$, and its nontrivial monodromy classifies the dual-aperture torsor as the connected double cover. This exhibits the hidden nontrivial torsor that was invisible to single-aperture observation: see the Necker cube and gear instantiations of Figure \ref{fig:dual-aperture}.

\begin{figure}[htb]
    \centering
    \setlength{\fboxsep}{0pt}
    \fbox{\includegraphics[height=6cm]{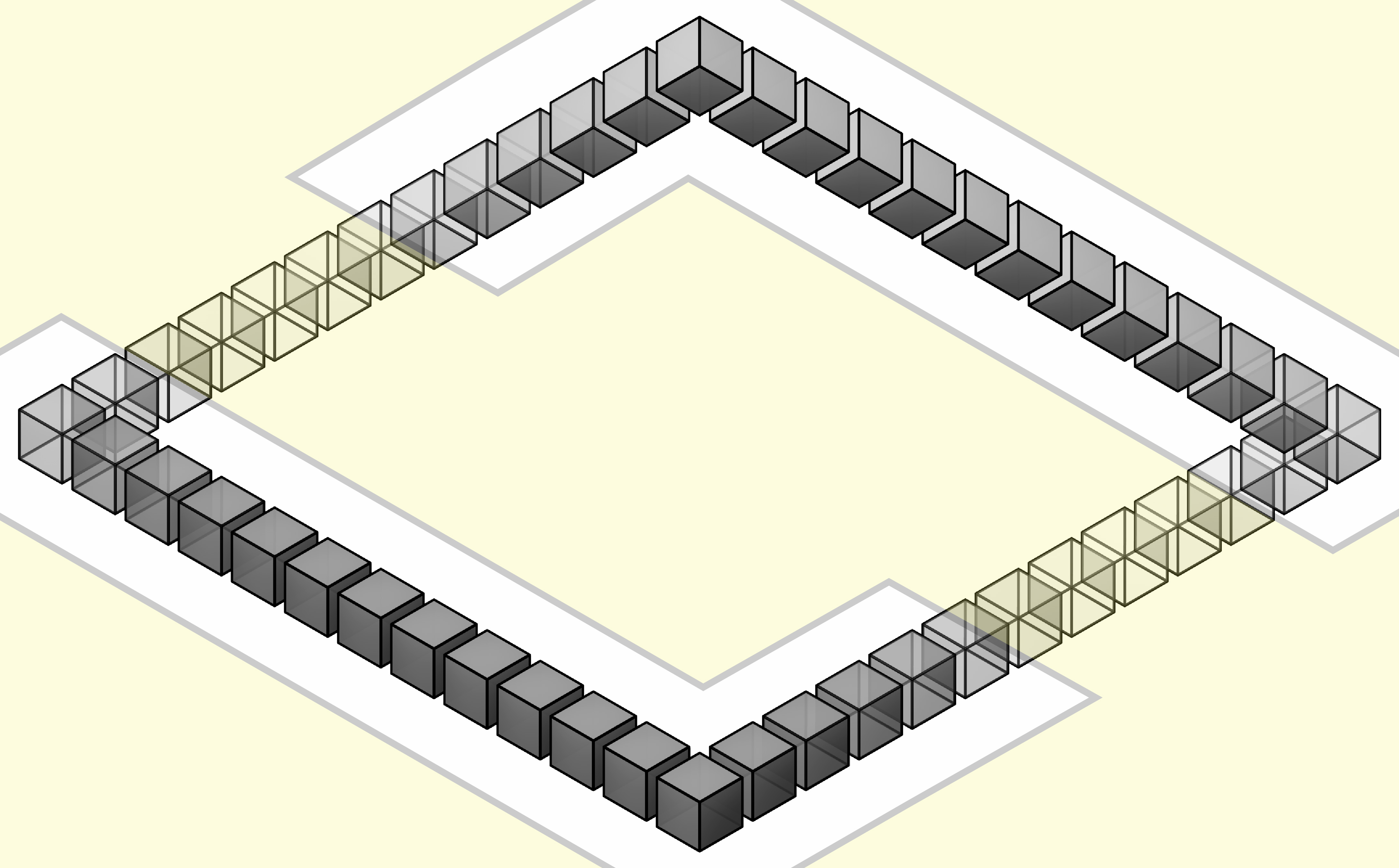}}
    \fbox{\includegraphics[height=6cm]{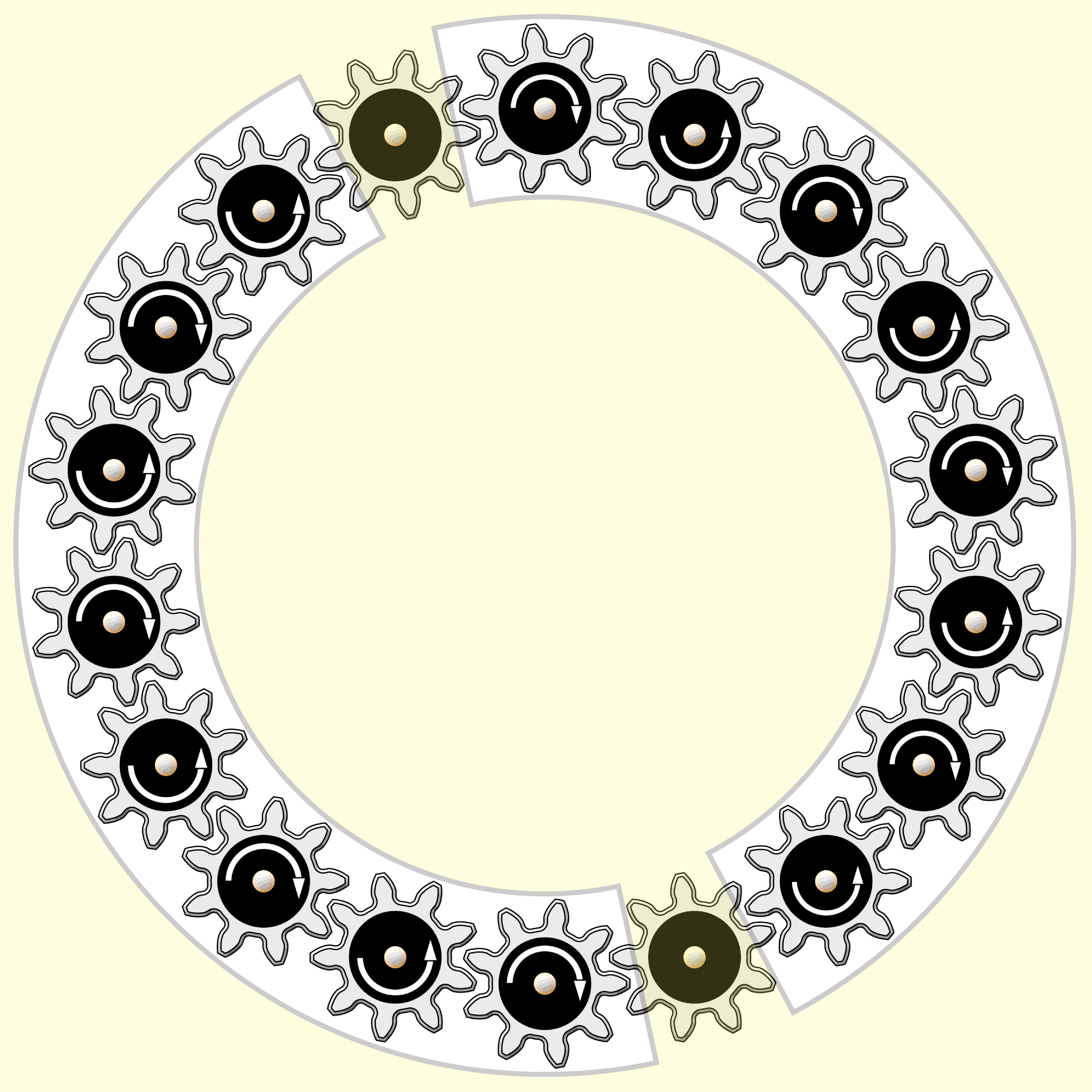}}
    \caption{\small Dual-aperture MoMA with the opposite-spin rule. [Left] A ring of Necker cubes with antipodal sliding apertures; the exchange loop reveals nontrivial monodromy on configuration space. [Right] An even gear cycle -- trivial holonomy, but dual apertures exhibit the hidden nontrivial torsor upon exchange.
}
    \label{fig:dual-aperture}
\end{figure}

The construction extends to two-dimensional fields. Figure~\ref{fig:lozenge-dual-aperture} shows a lozenge tiling variant of the Necker interval of Figure \ref{fig:necker-interval}. There are two apertures (antipodal half-spaces) displaying opposite stepped-surface orientations, the intervening strip fading to flat ambiguity. As the entire aperture-strip decomposition rotates around the figure center, the apertures trace the exchange loop in the configuration space $\conf_2(\R^2)$ of unordered pairs in the plane, which has fundamental group $\Z$ and so supports the same exchange monodromy as the circular case.

\begin{figure}[htb]
    \centering
    \setlength{\fboxsep}{0pt}
    \fbox{\includegraphics[height=6cm]{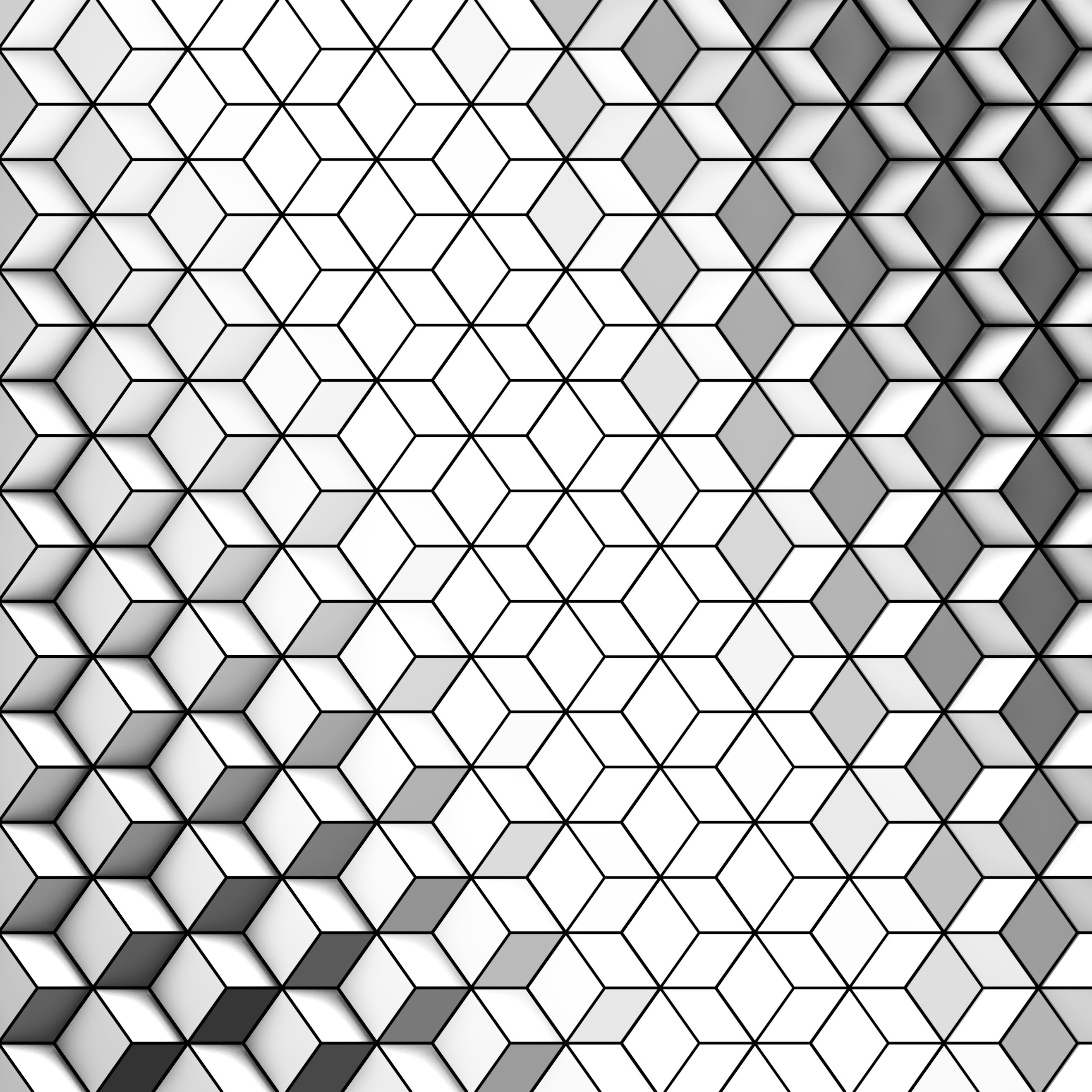}}
    \hspace{2em}
    \fbox{\includegraphics[height=6cm]{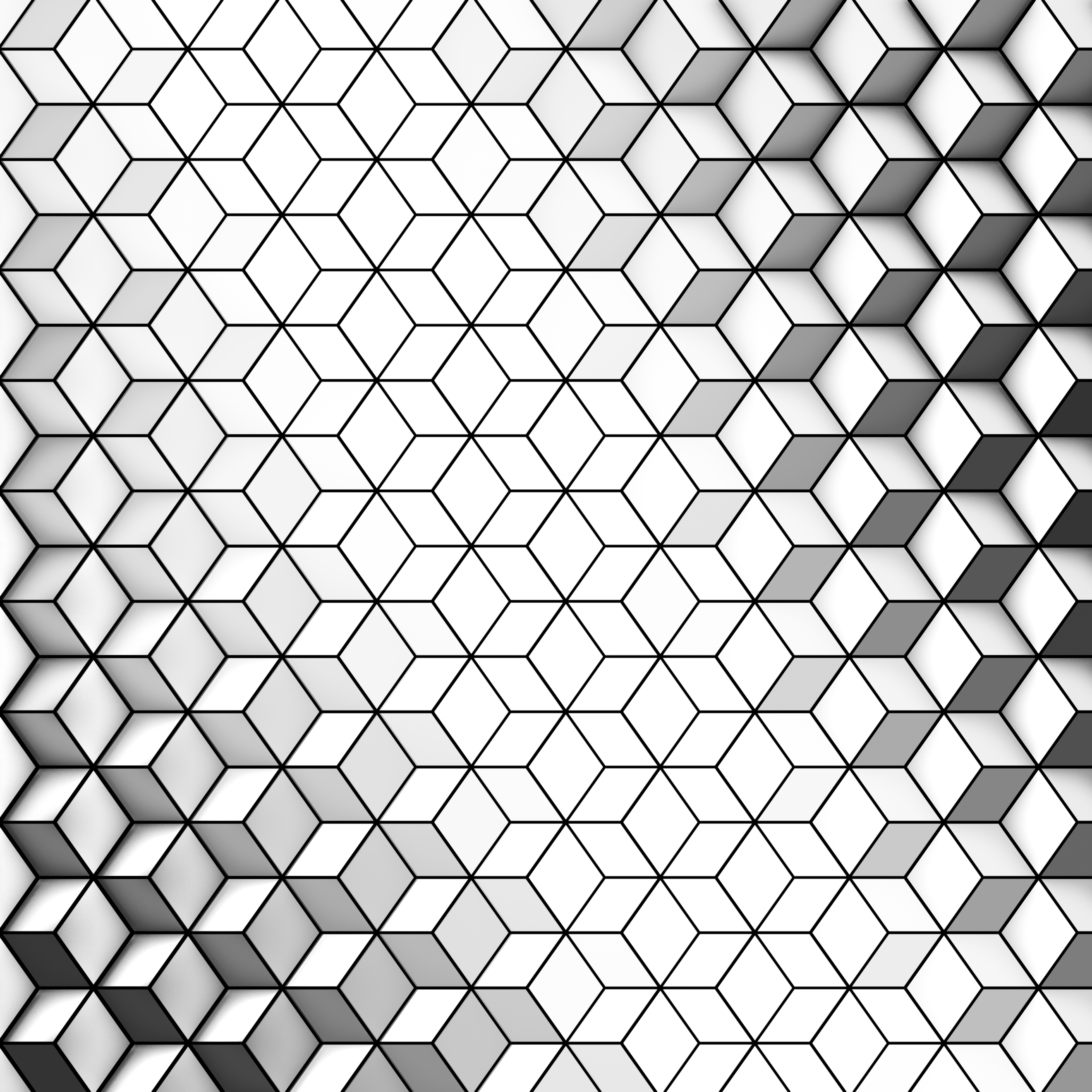}}
\caption{\small A lozenge tiling with dual half-plane apertures separated by a strip of ambiguity. Rotating the configuration a half-turn traces the exchange loop; after an exchange, the displayed orientations (as interpreted via shading to orient normals to faces) have swapped positions.
}
    \label{fig:lozenge-dual-aperture}
\end{figure}

\subsection{On terminology: holonomy vs. monodromy}
\label{sec:hol-vs-mono}

Given the centrality of {\em holonomy} in this paper and the use of {\em monodromy} in the MoMA moniker, it is worth recalling the [subtle, often obscured] difference between the two: holonomy is to monodromy as noun is to verb.

\style{Holonomy} is a group element -- the measurement obtained by summing the coupling cochain around a cycle: $\mathrm{hol}_\gamma(\coupling) = \sum_{e \in \gamma} \coupling(e) \in \Z_2$. Different cochains in the same cohomology class may have different edge values but yield identical holonomy on every cycle. Holonomy is algebraic, computational, static: it answers ``what is the total mismatch?''

\style{Monodromy} is an action -- what happens to fiber elements when transported around a loop. Given a torsor $\torsor \to X$ and a loop $\gamma$, lift $\gamma$ to the total space starting at some point $p \in \torsor$; monodromy asks where you arrive upon return. Trivial monodromy means the lift closes; nontrivial monodromy means you land on the opposite sheet. Monodromy is geometric, dynamic, experiential: it answers ``what happens when I walk around?''
The relationship is direct: holonomy \emph{determines} monodromy. If $\coupling$ presents the torsor $\torsor$, then nonzero holonomy around $\gamma$ implies nontrivial monodromy along $\gamma$, and conversely. This is the content of the classification theorem (Theorem~\ref{thm:torsor-classification}).

Single-aperture MoMA converts the algebraic noun into the experiential verb: the holonomy of $\coupling$ becomes visible as the monodromy of displayed sections. Dual-aperture MoMA constructs a \emph{different} torsor -- one living on configuration space rather than the base -- whose monodromy reveals structure invisible to the original holonomy computation. The configuration-space torsor has its own holonomy, of course; but it is not determined by $\coupling$. It is determined by the opposite-spin rule, which manufactures a new cocycle on $\conf_2(X)$.

\section{Relative $H^2$: Curvature}
\label{sec:relH2}

The odd cycles of \S\ref{sec:absH1} were classified by $H^1(\cgraph)$: nonzero holonomy around a cycle obstructs global sections. We now examine what happens when such a cycle bounds a disc. The discrete Stokes theorem forces a relationship between boundary and interior: nonzero boundary holonomy implies nonzero total interior curvature. This is a \style{relative} $H^2$ phenomenon -- the third level of the hierarchy, where impossibility on the boundary promotes to \style{curvature} in the interior via the connecting homomorphism $\connecting: H^1(\partial D) \to H^2(D, \partial D)$.

The mechanism is identical to that which promoted incompatible $H^0$ boundary data to relative $H^1$ conflict in \S\ref{sec:relH^1}, now shifted up one degree: boundary impossibility becomes interior curvature.
Let $D \subset X$ be a cellular disc whose boundary $\partial D$ lies entirely in the constraint graph $\cgraph$. The long exact sequence of the pair $(D, \partial D)$ includes
\[
H^1(D) \xrightarrow{\mathrm{res}} H^1(\partial D) \xrightarrow{\delta^*} H^2(D, \partial D) \to H^2(D).
\]
Since $D$ is contractible, $H^1(D) = H^2(D) = 0$, and exactness forces an isomorphism
\[
\delta^*: H^1(\partial D; \Z_2) \xrightarrow{\;\cong\;} H^2(D, \partial D; \Z_2).
\]
Both groups are $\Z_2$. Every nontrivial boundary class promotes to a nontrivial interior class. This is the Stokes Principle of \S\ref{sec:hierarchy} at $k = 1$: nonzero holonomy $\sum_{e \in \partial D} \coupling(e) = 1$ forces nonzero total curvature $\sum_{f \subset D} \mu(f) = 1$. At least one face inside $D$ must be frustrated; the boundary obstruction localizes but cannot vanish.

\subsection{Examples of curvature defects}
\label{sec:curvature-defects}

The pentagonal rosette of the P3 tiling provides the paradigmatic example. Five linked corners form a cycle $\gamma$ in $\cgraph$ with nonzero holonomy representing the nontrivial class in $H^1(\gamma)$, this cycle bounding a disc $D$ in the plane. The connecting homomorphism promotes $[\coupling|_\gamma]$ to a class in $H^2(D, \partial D)$. 

The same analysis applies to rhombic zonohedra. Each pentagonal face of the RT30's dodecahedral constraint graph, or the RE90's truncated-icosahedral graph, is a 5-cycle bounding a disc on the sphere. The relative class $[\mu] \in H^2(D, \partial D)$ forces curvature inside each disc. Globally, frustrated regions pair across the sphere -- total curvature vanishes on a closed surface -- but locally, each rosette carries its relative $H^2$ defect.

Other domains exhibit the phenomenon as well. The \style{order-7 rhombille tiling} of the hyperbolic plane consists of rhombi arranged so that the constraint graph forms the regular heptagonal tiling $\{7,3\}$ (see Figure~\ref{fig:curvature-defects}). In the tiling, the arrangement of 7 ``boxes'' about the center is a version of Penrose's heptagonal stair in Figure \ref{fig:penrose-necker}[left], but with no hole inserted. In the constraint graph, each heptagon is a 7-cycle bounding a disc where Stokes forces local curvature. 

\begin{figure}[htb]
    \centering
    \setlength{\fboxsep}{0pt}
    \fbox{\includegraphics[height=6cm]{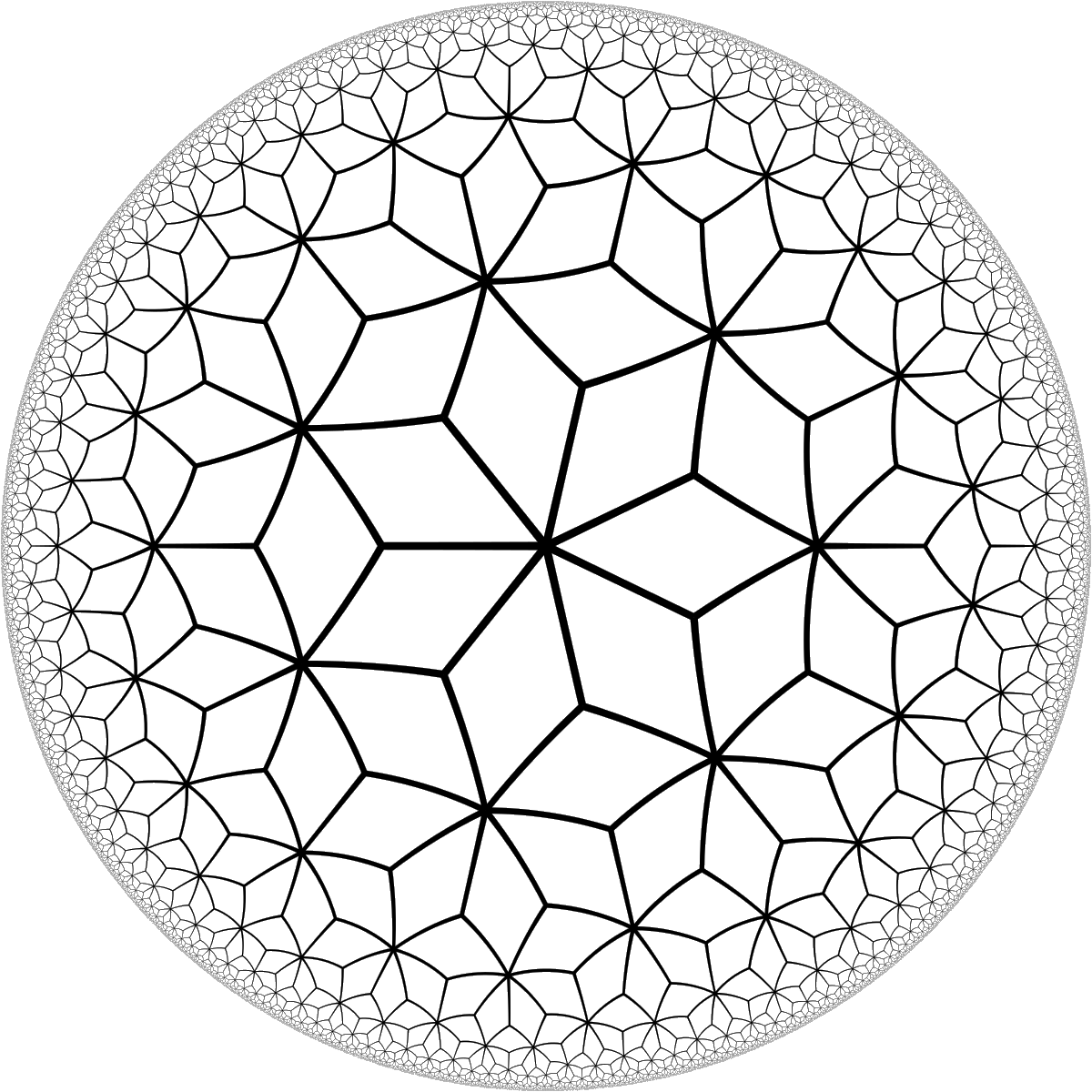}}
    \hspace{1em}
    \fbox{\includegraphics[height=6cm]{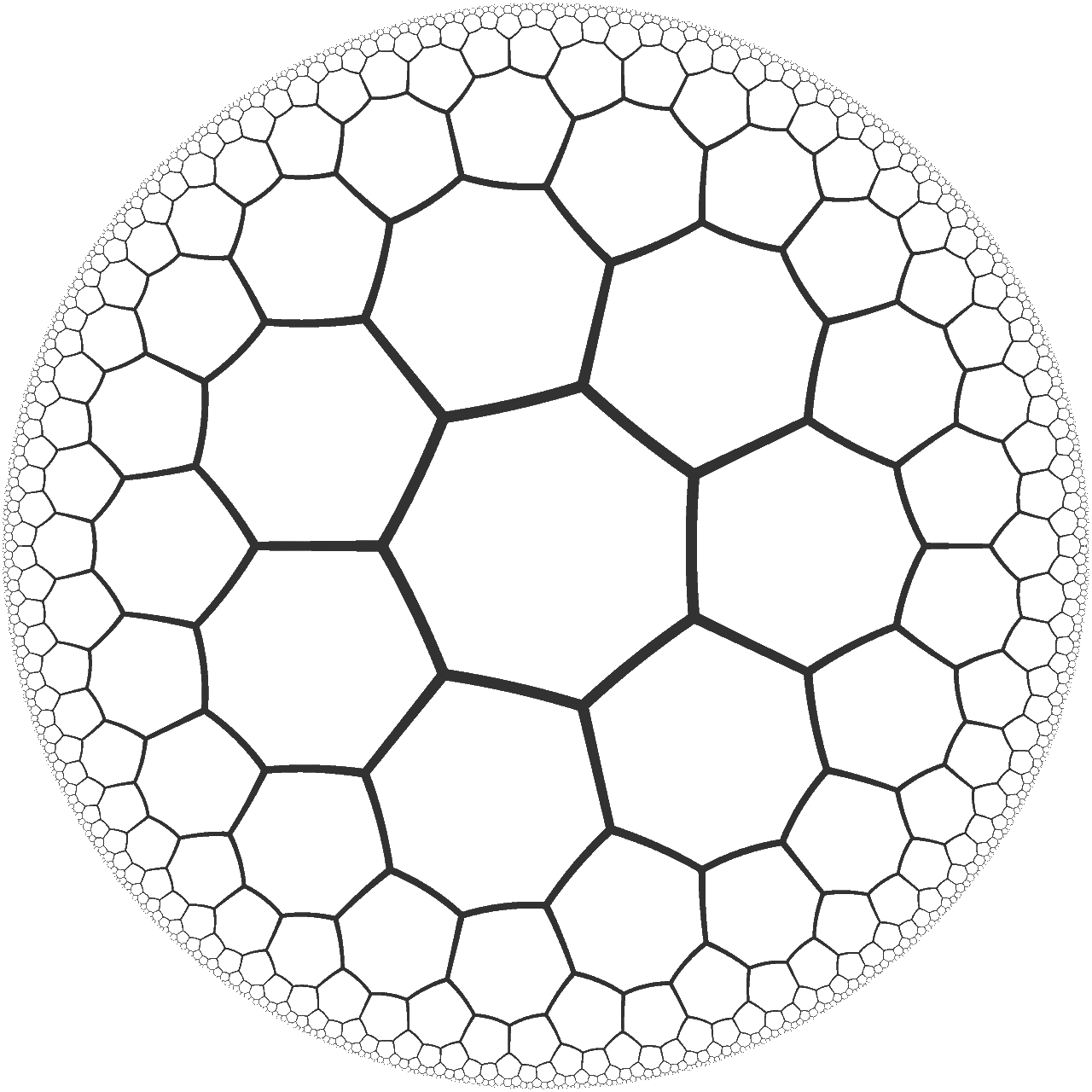}}
    \caption{\small [left] The order 7-3 rhombille tiling of the hyperbolic 
    plane. Its constraint graph is the heptagonal tiling [right] with each 7-cycle 
    bounding a disc carrying a relative $H^2$ defect.}
    \label{fig:curvature-defects}
\end{figure}

A resonant example of localized $H^2$ arises in gear configurations where curvature is more explicitly localized. Consider three quarter-planes of square gear meshes beveled at edges, meeting at a corner like three faces of an octant at the origin (Figure~\ref{fig:gear-corner}). Each quarter-plane is a bipartite grid with $\coupling \equiv 1$; each quarter-plane spins freely; any pair of quarter-planes meshes without incident. The corner is the locus of obstruction and curvature that locks the entire mesh.

\begin{figure}[htb]
    \centering
    \setlength{\fboxsep}{0pt}
    \fbox{\includegraphics[width=5in]{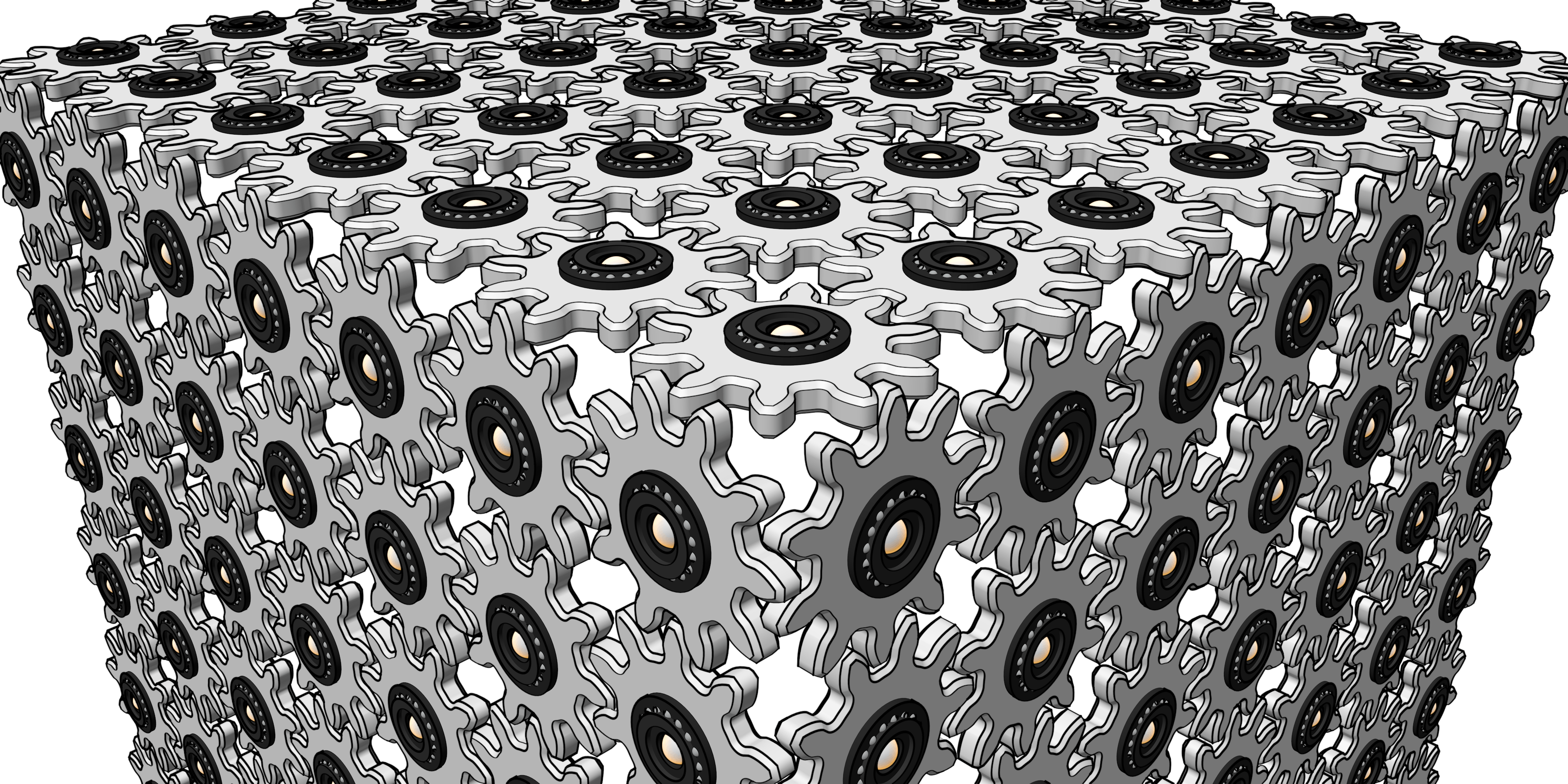}}
    \caption{\small Three quarter-planes of square gear meshes meeting at a corner. The 
    triangular corner where they meet has nontrivial boundary holonomy, 
    forcing a curvature defect.}
    \label{fig:gear-corner}
\end{figure}

The underlying base complex is illustrative: see Figure \ref{fig:gear-corner-frame}. The complex is full of quads that are (in all senses) flat, including the bevel edges between quarter-planes. The single snub simplex lies at the corner and has an odd cycle as its boundary. This place where the three gears meet is where the relative $H^2$ class is supported. Note that deleting the three corner gears does not eliminate the lock (or the cohomology): a larger odd cycle carries the obstruction.

\begin{figure}[htb]
    \centering
    \setlength{\fboxsep}{0pt}
    \fbox{\includegraphics[width=5in]{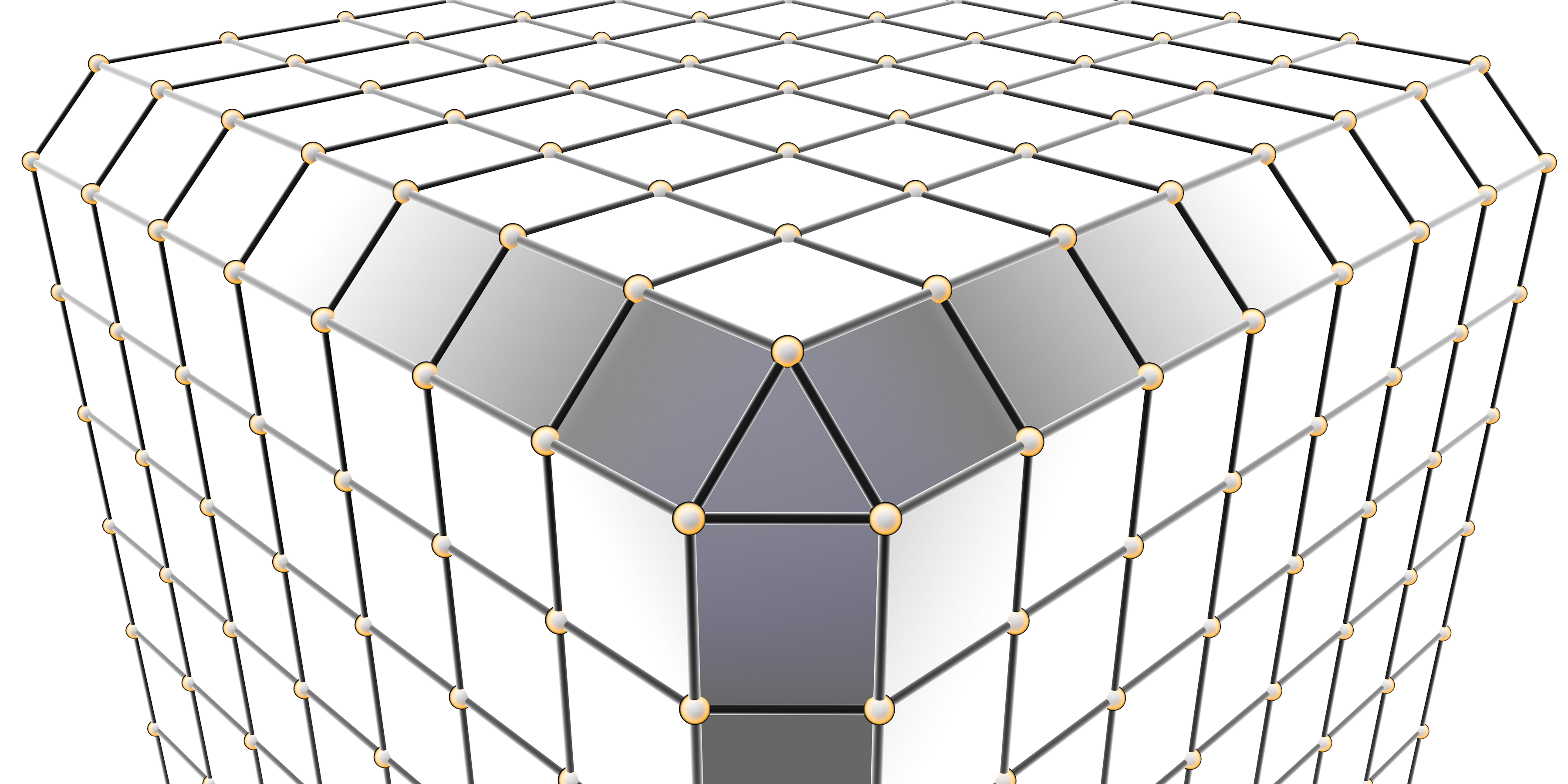}}
    \caption{\small The base complex of the gear system from Figure \ref{fig:gear-corner} has a single triangular 2-cell at the corner where local curvature is focused.}
    \label{fig:gear-corner-frame}
\end{figure}

\subsection{Mobility and conservation}
\label{sec:curvature-conservation}

The curvature $\mu = \coboundary\tilde\coupling$ depends on the extension $\tilde\coupling$ chosen on free edges. Changing $\tilde\coupling(e)$ for a free edge $e$ toggles $\mu$ on the two faces incident to $e$: curvature hops from one face to its neighbor. This mobility respects a conservation law.

If a region $D$ has $\partial D \subset \cgraph$ -- its entire boundary consists of constraint edges -- then the total curvature $\sum_{f \subset D} \mu(f)$ is determined by $\coupling|_{\partial D}$ alone (Lemma~\ref{lem:extension-independence}). Curvature can redistribute among faces of $D$ but cannot cross the constrained boundary. The relative class $[\mu] \in H^2(D, \partial D)$ is an invariant; only its representative -- which specific faces are frustrated -- depends on the extension.

On a closed surface, Stokes constrains the global total:
\[
\sum_{f \in X} \mu(f) = \sum_{e \in \partial X} \coupling(e) = 0.
\]
Frustrated faces pair; isolated defects are topologically forbidden. The spherical zonohedra obey this: their pentagonal rosettes, individually frustrated, pair across the sphere to achieve net neutrality.

The plane $\R^2$, being non-compact, admits no such constraint. A single frustrated region can appear in isolation, its compensating partner pushed arbitrarily far along a chain of free edges. From our perspective this remains a relative phenomenon: the defect is detected on a disc $D$ with $\partial D \subset \cgraph$, as a class in $H^2(D, \partial D)$. Enlarging $D$ simply moves the region under consideration; any compensating defect lies outside.

\section{Absolute $H^2$: Inaccessibility}
\label{sec:absH2}

The relative $H^2$ phenomena of \S\ref{sec:relH2} -- rosette defects, gear corners, 
hyperbolic heptagons -- all arise from the Stokes principle: the connecting 
homomorphism promotes $H^1$ boundary classes to $H^2$ interior curvature. 
These are genuine $H^2$ classes, but they require a bounding cycle as input.

Can bistable systems exhibit \emph{absolute} $H^2$ phenomena -- \style{inaccessibility} 
intrinsic to the constraint structure on a closed surface?
Bistable elements live on vertices; 
pairwise constraints live on edges; the constraint graph $\cgraph$ is 
one-dimensional. 
To access $H^2$ on closed surfaces, one must exploit the 2-cells in the ambient complex $X$. 

Two routes present themselves. The \emph{multiplicative} route uses the cup product: given $H^1$ 
classes on a closed surface, their product $\alpha \smile \beta \in H^2$ measures 
geometric interference -- forced intersections among the seams where torsors 
fail to extend. This requires $H^1$ classes as raw material, which are not always available. In contrast, there is a 
\emph{degree-shifted} route which places bistable variables on edges rather than 
vertices, with prescribed flux on faces; the obstruction to finding a potential 
then lives directly in $H^2$, but the price is redefining what bistable elements are.

The two routes access genuinely different aspects of inaccessibility. The \emph{multiplicative} route via cup products measures interference among existing $H^1$ obstructions -- forced collisions of seams that cannot be separated. The \emph{degree-shifted} route via flux and potentials captures inaccessibility in its purest form: configurations partitioned into sectors by a conserved topological invariant, unreachable from one another despite free local motion. The experiential character differs qualitatively from $H^1$ impossibility: the system is not frozen but \emph{partitioned}.

\subsection{Cup products: seam interference}
\label{sec:cup-product}

The cohomology ring $H^*(X; \Z_2)$ carries a multiplicative structure via the 
cup product $\smile: H^1(X) \times H^1(X) \to H^{2}(X)$. For classes 
$\alpha, \beta \in H^1$ on a closed surface, the product $\alpha \smile \beta \in H^2$ has a geometric interpretation through Poincar\'e duality: 
each $H^1$ class has a dual 1-cycle representable by a closed curve, and 
$\langle \alpha \smile \beta, [X] \rangle$ counts (mod 2) the intersections 
of these dual curves. When the cup product is nonzero, the curves must intersect.

For a $\Z_2$-torsor classified by $\alpha \in H^1(X)$, the Poincar\'e dual 
admits concrete realization as a \style{seam}: a cut along which a chosen local section cannot 
be continued, forcing a sheet swap upon crossing. Away from the seam, 
a consistent section exists; the seam is where local trivializations collide.
Different cocycle representatives move the seam but preserve its homology class.

The torus gear mesh (Figure \ref{fig:gear-torus}) makes this visible. Consider an $N \times M$ grid of 
external gears on $T^2$, with $N$ and $M$ both odd. 
The coupling $\coupling \equiv 1$ enforces opposition at every mesh point, 
and both fundamental cycles carry nonzero holonomy: $[\coupling] = \alpha + \beta$ 
in $H^1(T^2; \Z_2) \cong \Z_2 \oplus \Z_2$. No global spin assignment exists.

The rectangular grid provides a canonical decomposition. Let $\coupling_h \in C^1(T^2; \Z_2)$ be the cochain supported on horizontal edges and $\coupling_v$ the complementary cochain supported on vertical edges. Then $\coupling = \coupling_h + \coupling_v$, and by construction $[\coupling_h] = \alpha$, $[\coupling_v] = \beta$ where $\alpha, \beta$ are the standard generators of $H^1(T^2; \Z_2)$. Let $\Gamma_\alpha$, $\Gamma_\beta$ denote the Poincar\'e-dual seams -- the loci where local sections of the corresponding torsors must swap sheets. The cup product measures whether these seams can be separated:
\[
\langle \alpha \smile \beta, [T^2] \rangle = 
\#(\Gamma_\alpha \cap \Gamma_\beta) \equiv 1 \pmod 2.
\]
When nonzero, no choice of cocycle representatives renders the seams disjoint. On the torus with both dimensions odd, the horizontal and vertical seams must collide -- their intersection is forced by topology, not by the particular decomposition chosen 
(Figure~\ref{fig:torus-seams}). The allowed moves are coboundary shifts $\alpha \mapsto \alpha + \coboundary\xi$, which reposition seams without changing their homology classes; the cup product $\alpha \smile \beta \neq 0$ obstructs accessing a disjoint-seam representative under these moves. Mechanically, this means any attempt to ``resolve'' one obstruction (say, by cutting along a horizontal seam) necessarily crosses the other.

\begin{figure}[htb]
    \centering
    \setlength{\fboxsep}{0pt}
    \includegraphics[height=5cm]{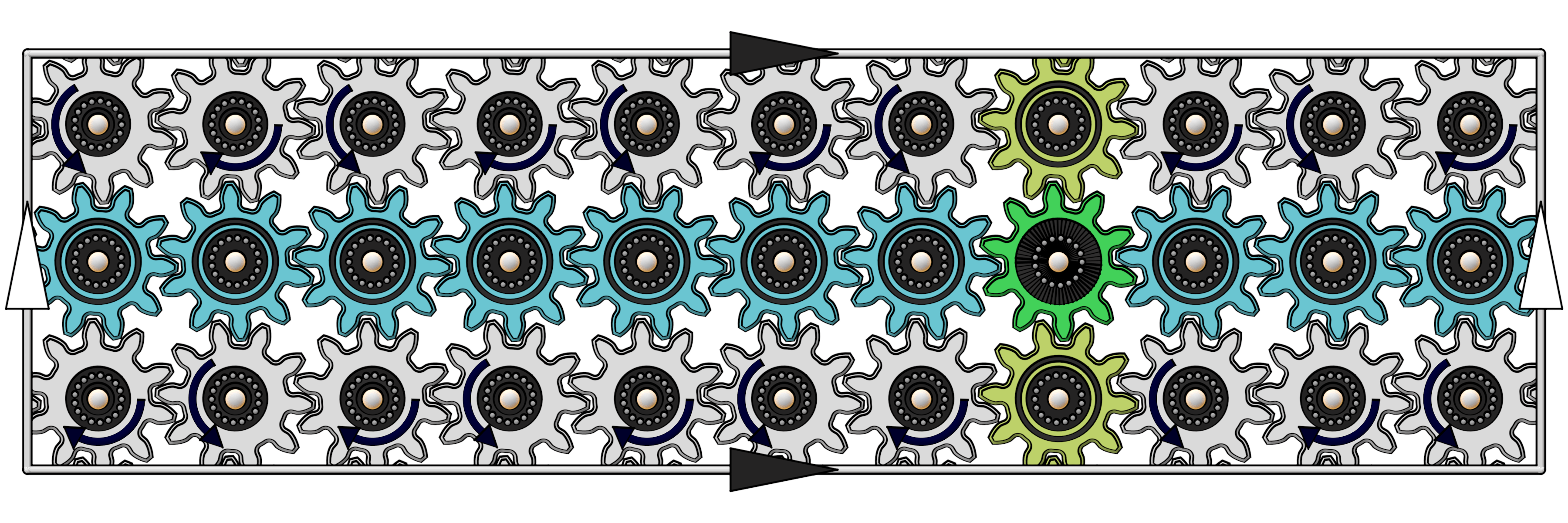}
    \caption{\small A torus gear mesh with both dimensions odd. The horizontal 
    and vertical seams must intersect and represent the Poincar\'e dual of the 
    nonzero cup product of the $H^1$ classes in $H^2$.
}
    \label{fig:torus-seams}
\end{figure}

A single class can interfere with \emph{itself} when the base space is 
nonorientable. On the real projective plane, the cohomology ring is 
$H^*(\mathbb{RP}^2; \Z_2) = \Z_2[\alpha]/(\alpha^3)$, and the cup square 
$\alpha^2 \neq 0$ generates $H^2$. To access $\mathbb{RP}^2$ from gears, 
consider a bipartite mesh on $S^2$ -- freely spinning, no obstruction -- 
but viewed through antipodal apertures subject to the opposite-spin rule 
of \S\ref{sec:dual-apertures}. The configuration space of antipodal pairs 
is $\mathbb{RP}^2$, and the opposite-spin rule constructs a torsor classified 
by $\alpha \in H^1(\mathbb{RP}^2)$. Its seam is a one-sided thickened equator with unavoidable self-intersection (Figure~\ref{fig:rp2-gears}).

\begin{figure}[htb]
    \centering
    \setlength{\fboxsep}{0pt}
    \includegraphics[width=6in]{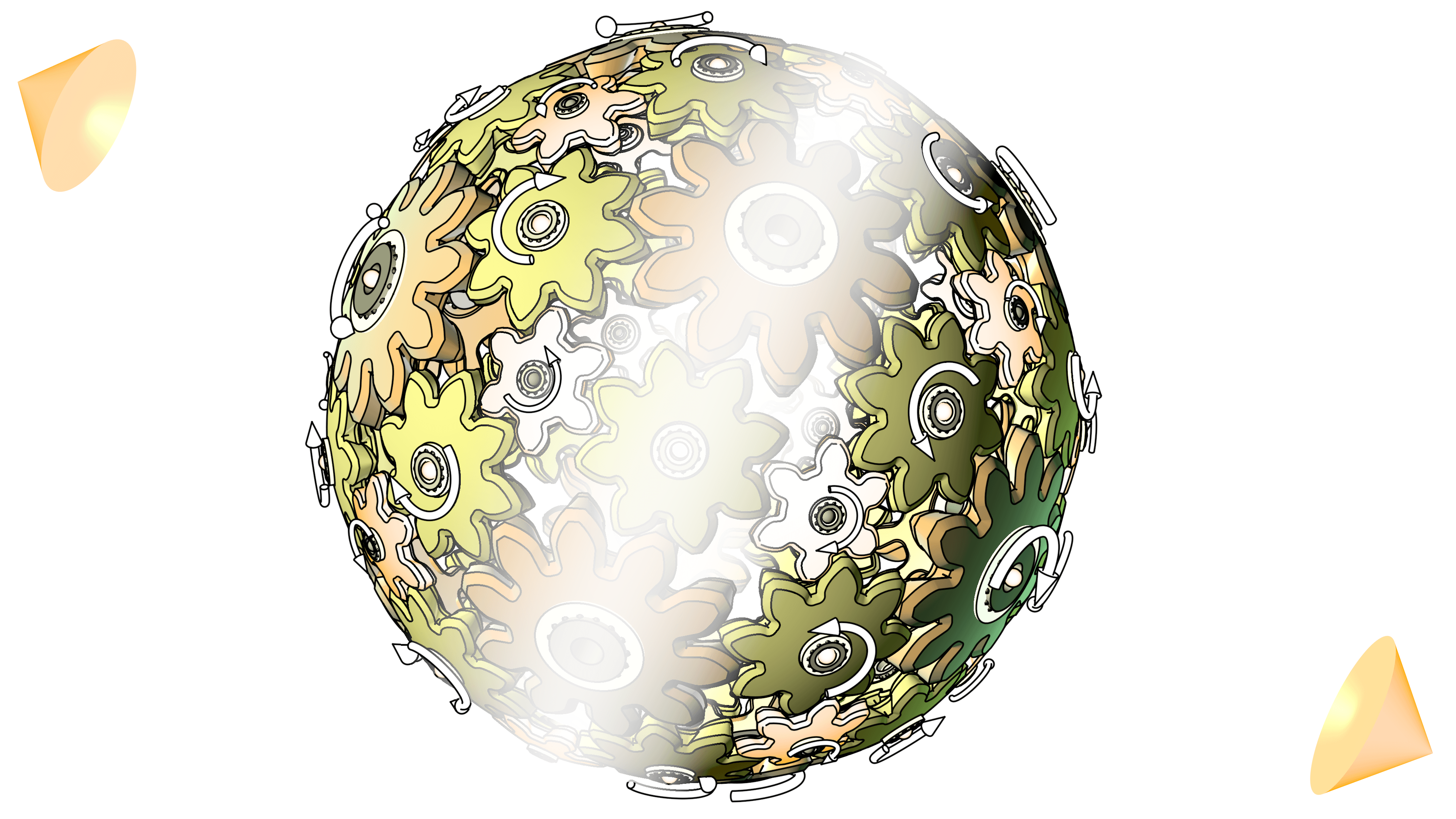}
    \caption{\small A pair of lights illuminates antipodal apertures of a gear mesh on $S^2$ 
    with the opposite-spin rule. The configuration space is $\mathbb{RP}^2$; the constructed torsor 
    has cohomology class $\alpha\in H^1(\mathbb{RP}^2)$ with $\alpha^2 \neq 0$.
}
    \label{fig:rp2-gears}
\end{figure}

\subsection{Degree-shifted models: flux and potentials}
\label{sec:flux}

The cup product route requires $H^1$ classes as raw material -- it measures 
interference among existing obstructions. To access $H^2$ without this 
prerequisite, we shift the degree at which bistable variables live.

Throughout the preceding sections, bistable states were 0-cochains 
$x \in C^0(X; \Z_2)$ assigning values to vertices, with coupling 
$\coupling \in C^1$ specifying edge constraints. The obstruction to 
solving $\coboundary x = \coupling$ lived in $H^1$. Now shift everything 
up by one degree: bistable states become 1-cochains on edges, constraints 
become 2-cochains on faces, and the obstruction moves to $H^2$.

Let $X$ be a closed surface with a fixed cellulation. A \style{flux} is a 
2-cochain $\mu \in C^2(X; \Z_2)$ assigning a parity to each face. A 
\style{potential} for $\mu$ is a 1-cochain $A \in C^1(X; \Z_2)$ satisfying 
$\coboundary A = \mu$ -- the edge-states sum to the prescribed face-value 
around each boundary. A flux admits a potential if and only if 
$[\mu] = 0$ in $H^2(X; \Z_2)$.

On a closed surface, Stokes constrains total flux: for any potential $A$,
\[
\sum_{f \in X} (\coboundary A)(f) = \langle \coboundary A, [X] \rangle = 0.
\]
Every exact flux has even total parity. The evaluation map 
$[\mu] \mapsto \sum_f \mu(f)$ identifies $H^2(X; \Z_2) \cong \Z_2$ for 
any connected closed surface: odd total parity represents the nontrivial 
class and admits no potential.

The obstruction manifests operationally as \style{inaccessibility}. Define 
an \style{edge toggle} as the operation $A \mapsto A + \epsilon_e$ flipping 
the potential on edge $e$; this changes flux by $\mu \mapsto \mu + \coboundary\epsilon_e$, 
toggling exactly the two incident faces. Starting from any configuration, 
the reachable fluxes under edge toggles form the coset $\mu + B^2(X)$. 
Configurations partition into $|H^2(X)|$ sectors -- for a closed surface, 
two: even-parity and odd-parity fluxes, mutually unreachable.

This is a different species of obstruction than $H^1$ impossibility. An odd 
gear cycle cannot move; contradiction is immediate. The $H^2$ obstruction 
here is kinematic: every edge toggle executes freely, the system responds 
fluidly to local moves, yet certain configurations remain forever out of 
reach. The system is not locked but \emph{partitioned} -- the hallmark of inaccessibility.

\begin{figure}[htb]
    \centering
    \setlength{\fboxsep}{0pt}
    \fbox{\includegraphics[width=6.0in]{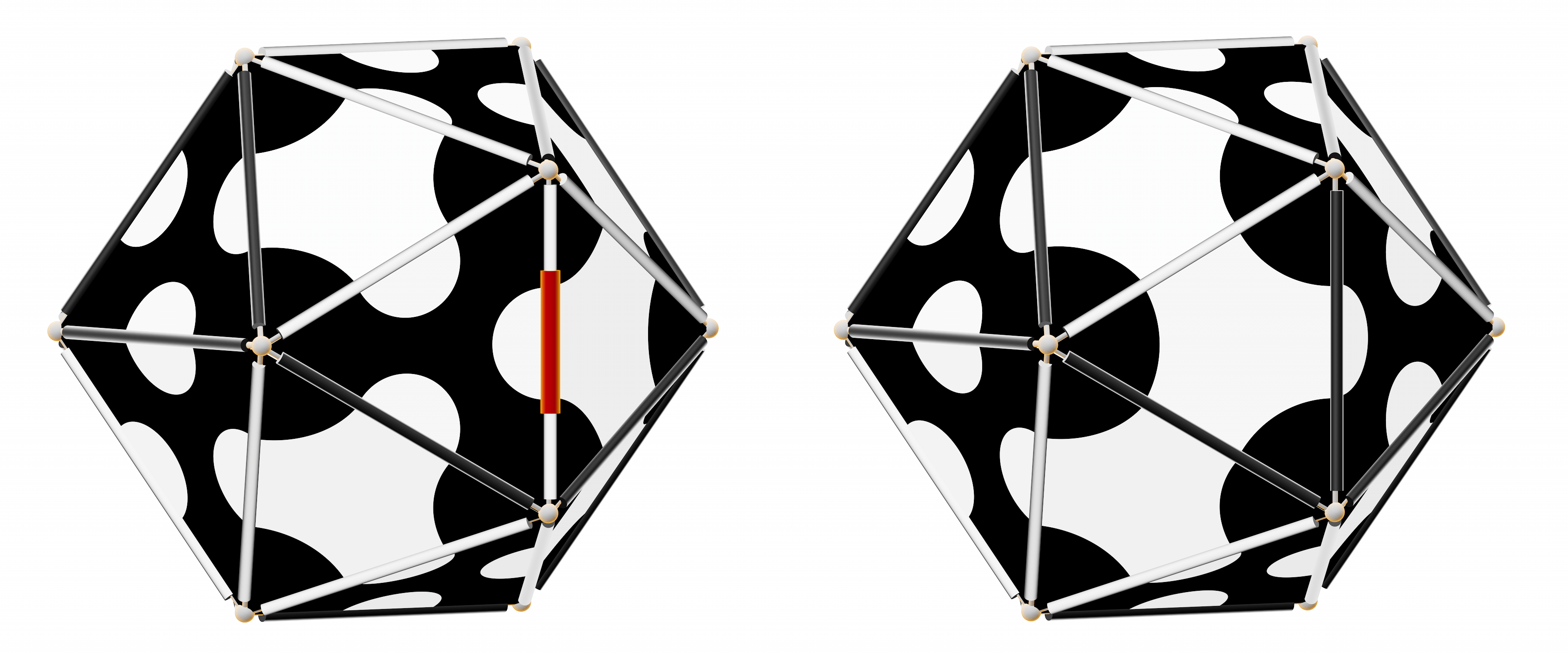}}
    \caption{\small The truchet game on a triangulated sphere. Clicking an edge 
    [left, in red] toggles its state and toggles the states of the two adjacent faces [right]; 
    the goal is to reach a target configuration of face tiles. Victory is possible if and only if 
    the initial and target face distributions share the same $H^2$ class.
    \animationlink[Interactive]{https://github.com/robertghrist/truchet-icosahedron/}}
    \label{fig:truchet-game}
\end{figure}

This inaccessibility can be experienced directly through a simple game. 
Tile a closed surface with truchet-style triangles admitting two states each: see 
Figure~\ref{fig:truchet-game}. Clicking an edge toggles both 
incident faces. The challenge: reach a target configuration from a 
random start. Every move executes freely, yet on a closed surface parity is conserved -- some targets 
are unreachable regardless of the sequence of moves. The $H^2$ class 
partitions configurations into exactly two sectors, and no sequence of local moves can cross from one to the other. 
This is a simple $H^2$ reachability problem.

This game echoes the classic \emph{Lights Out} puzzle, where pressing a button toggles neighboring lights and the goal is to extinguish all \cite{AndersonFeil1998LightsOut}. Both reduce to $\Z_2$ linear algebra; both exhibit ``chase the defect'' dynamics where local corrections create new problems elsewhere. In our setting, the algebra is cellular cohomology, and the conserved quantity partitioning winnable from unwinnable configurations is the $H^2$ class.

One can play on any triangulated or cellular surface. 
For surfaces with boundary, accessibility depends on whether boundary edges can be toggled. With \style{free boundary} (all edges clickable), every configuration is reachable: defects can be pushed to the boundary and annihilated. With \style{frozen boundary}, the obstruction migrates to relative cohomology $H^2(D, \partial D) \cong \Z_2$; parity relative to the fixed boundary frame is conserved.

\section{Conclusions}
\label{sec:conc}

Penrose's enigmatic heptagonal stairs exploited bistable ambiguities around a loop and pointed to $\Z_2$ coefficients and parity arithmetic -- the simple tools here elevated to the language of torsors and cohomological classification. The hierarchy that we have traced is echoed in Necker cubes, gears, tilings, and more. What cohomology detects at each degree corresponds to a qualitatively distinct mode of visual paradox, and the discrete Stokes theorem -- boundary holonomy inducing interior curvature -- is the single mechanism that drives each transition.

Along the way, the framework has clarified several phenomena that were previously understood only in isolation, from the classical intuitions about gear rings and parity to the M\"obius gear ring, and the P3 quasitilings, the latter exhibiting a relative $H^2$ defect -- boundary holonomy forcing interior curvature. The cup product on the torus gear mesh reveals that $H^2$ inaccessibility can arise as a consequence of independent $H^1$ obstructions on a closed surface.

Several natural extensions press against the boundaries of this framework. The $\Z_2$ coefficient restriction invites relaxation: gear systems with specified ratios demand rational or real coefficients; full stepped-surface heights require integer coefficients; bevel gears coupling rotation axes at angles carry holonomy valued in $SO(3)$. The P3 quasitiling, with its heterogeneous mix of even and odd cycles, calls for spatially-varying constraint data -- the language of sheaves rather than uniform cochains. The prior work \cite{GhristCooperband2025Obstructions} developed sheaf-theoretic methods for static paradoxes and the general theory of nonabelian network torsors; combining those tools with the hierarchical structure developed here is a natural next step.

The cohomological hierarchy itself continues. Just as $H^1$ classifies torsors and $H^2$ classifies gerbes, $H^3$ classifies 2-gerbes, and the pattern persists -- though physical or visual realizations of higher cohomological obstructions remain to be discovered. Perhaps the most promising direction is temporal. The Method of Monodromic Apertures uses time to reveal spatial structure, but treats the animation as a parameter rather than part of the base. A theory of paradoxes intrinsic to \emph{video} -- where the impossibility emerges from the sequence of frames rather than residing in any single one -- would require cohomology of a space-time complex. The machinery is ready; the visual imagination has not yet caught up.

\bibliographystyle{unsrturl}

\bibliography{IMPOSSIBLE}

\appendix

\section{Cohomological Background}
\label{sec:appendix-cohomology}

This appendix collects the cohomological tools used in the paper. We work 
exclusively with $\Z_2 = \Z/2\Z$ coefficients, which eliminates all sign 
considerations. Readers seeking a fuller treatment should consult 
Hatcher~\cite{Hatcher2002} or any standard text; our aim here is to fix 
notation and recall the specific facts required.

\subsection{Cochains, coboundary, and cohomology}

Let $X$ be a cell complex. The \style{$k$-cochain group} is
\[
C^k(X) = C^k(X; \Z_2) = \{ f: \{\text{$k$-cells of $X$}\} \to \Z_2 \},
\]
a $\Z_2$-vector space with basis dual to the $k$-cells. The 
\style{coboundary operator} $\coboundary: C^k(X) \to C^{k+1}(X)$ is defined by
\[
(\coboundary c)(e) = \sum_{f \subset \partial e} c(f),
\]
summing over $k$-cells in the boundary of the $(k+1)$-cell $e$. For a graph, 
$(\coboundary c)(e) = c(v) + c(u)$ when $e = uv$. For a $2$-complex, 
$(\coboundary c)(f) = \sum_{e \subset \partial f} c(e)$.

The identity $\coboundary^2 = 0$ holds because each $(k-1)$-cell appears an 
even number of times in $\partial(\partial e)$. This yields the 
\style{cohomology groups}
\[
H^k(X) = H^k(X; \Z_2) = \frac{\ker(\coboundary: C^k \to C^{k+1})}
{\mathrm{im}(\coboundary: C^{k-1} \to C^k)} = \frac{Z^k(X)}{B^k(X)}.
\]
For graphs, the key characterization: a $1$-cochain $\coupling$ is a 
coboundary if and only if $\sum_{e \in \gamma} \coupling(e) = 0$ for every 
cycle $\gamma$.

\subsection{Relative cohomology and the long exact sequence}

For a subcomplex $A \subset X$, the \style{relative cochain groups} are
\[
C^k(X, A) = \{ c \in C^k(X) : c|_A = 0 \},
\]
with cohomology $H^k(X, A)$ defined analogously. These fit into a 
\style{long exact sequence}
\[
\cdots \to H^k(X) \xrightarrow{\mathrm{res}} H^k(A) 
\xrightarrow{\connecting} H^{k+1}(X,A) \to H^{k+1}(X) \to \cdots
\]
The \style{connecting homomorphism} $\connecting$ is computed as follows: 
given a cocycle $a$ on $A$, extend it arbitrarily to a cochain $\tilde{a}$ 
on $X$, then $\connecting([a]) = [\coboundary \tilde{a}]$. This is 
well-defined and measures the obstruction to extending $a$ inward: 
$\connecting([a]) = 0$ if and only if $a$ extends to a cocycle on all of $X$.

When $X$ is contractible, exactness forces an isomorphism 
$\connecting: H^k(A) \xrightarrow{\cong} H^{k+1}(X, A)$.

\subsection{$\Z_2$-torsors and double covers}

A \style{$\Z_2$-torsor} over $X$ is a two-sheeted covering space with no 
preferred sheet: a space $\torsor$ with a free $\Z_2$-action and projection 
$\pi: \torsor \to X$ whose fibers are $\Z_2$-orbits. A \style{global section} 
is a continuous choice of one sheet over every point; a torsor is 
\style{trivial} if and only if it admits one.

Over $S^1$, there are exactly two torsors: the trivial one (two disjoint 
circles) and the connected double cover (the boundary of a M\"obius band). 
The connected cover admits no global section.

A coupling $\coupling \in C^1(\cgraph; \Z_2)$ on a graph $\cgraph$ 
determines a torsor by the following construction. Take two copies of each 
vertex. For each edge $e = uv$: if $\coupling(e) = 0$, connect 
$0_u \leftrightarrow 0_v$ and $1_u \leftrightarrow 1_v$; if $\coupling(e) = 1$, 
connect $0_u \leftrightarrow 1_v$ and $1_u \leftrightarrow 0_v$. The 
resulting graph is the total space $\torsor$, connected if and only if 
$[\coupling] \neq 0$ in $H^1(\cgraph)$.

\begin{theorem}[Classification of $\Z_2$-torsors]
\label{thm:torsor-classification}
For a connected complex $X$, there is a natural bijection
\[
\{ \text{isomorphism classes of $\Z_2$-torsors over } X \} 
\;\longleftrightarrow\; H^1(X; \Z_2).
\]
The trivial torsor corresponds to zero; nontrivial classes correspond to 
connected double covers.
\end{theorem}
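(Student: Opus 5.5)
To prove Theorem~\ref{thm:torsor-classification}, I would work combinatorially on the $1$-skeleton and build the bijection explicitly in both directions. This extends the double-cover construction of \S\ref{sec:torsors} and the proof of Criterion~\ref{crit:holonomy}.

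\emph{From cocycles to torsors.} Given a cocycle $\coupling \in Z^1(X;\Z_2)$, first build the double cover $\torsor_\coupling$ over $X^{(1)}$ by the vertex-doubling recipe. Each $2$-cell $f$ must then be lifted. Because $(\coboundary\coupling)(f)=0$, the attaching loop $\partial f$ has trivial holonomy, so its lift to $\torsor_\coupling|_{X^{(1)}}$ consists of two closed loops. Glue one copy of $f$ along each of them. Higher cells lift the same way, since their attaching maps land in a part of the cover that is already trivial over simply connected pieces.

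Two checks follow. First, cohomologous cocycles $\coupling$ and $\coupling+\coboundary\xi$ give isomorphic torsors: the map that flips the fibre over each vertex $v$ with $\xi(v)=1$ intertwines the edge gluings. This is the gauge transformation of Remark~\ref{rem:gauge}. Second, the zero class gives $X\times\Z_2$.

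\emph{From torsors to cocycles.} Given a torsor $\torsor\to X$, restrict it to the $0$-skeleton and choose a labelling of each vertex fibre by $\{0,1\}$. That is, choose a section $s$ over $X^{(0)}$. Each edge $e=uv$ is contractible, so $\torsor|_e$ is trivial, and path-lifting along $e$ sends $s(u)$ either to $s(v)$ or to the other point. Set $\coupling_s(e)=0$ or $1$ accordingly. Every $2$-cell is contractible, so the lift of $\partial f$ closes up, which gives $\coboundary\coupling_s=0$. Changing $s$ by $\xi\in C^0$ replaces $\coupling_s$ by $\coupling_s+\coboundary\xi$, and an isomorphism of torsors carries labellings to labellings, so the class $[\coupling_s]$ is well defined.

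It remains to show the two maps are mutually inverse. Starting from $\coupling$, building $\torsor_\coupling$, and using the tautological labelling returns $\coupling$ itself. Starting from $\torsor$, the map $\torsor_{\coupling_s}\to\torsor$ sending $i_v\mapsto$ (the appropriate point of the fibre over $v$ determined by $s$) extends over edges and then over cells by unique path lifting, and so it is an isomorphism. I expect this step to be the main obstacle: one has to justify that a torsor is determined up to isomorphism by its $1$-skeleton data. Concretely, a torsor over a cell is trivial and any fibrewise identification on $\partial f$ extends uniquely over $f$. I would argue this by using that $f$ is a disc and that covering maps have unique lifting over simply connected, locally path-connected spaces.

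Finally, the statement that nontrivial classes correspond to connected covers uses the connectedness of $X$. If $[\coupling]\ne 0$, some cycle has nonzero holonomy by Criterion~\ref{crit:holonomy}. Its lift joins the two sheets, and every point of the cover connects to that lift through a path lifted from a path in $X$. Conversely, if the class is zero the cover is $X\times\Z_2$, which is disconnected.
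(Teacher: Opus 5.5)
Your proposal is correct and follows the same route as the paper's sketch. The vertex-doubling construction sends a cocycle to a double cover, relabeling sheets by $\xi$ shows that cohomologous cocycles give isomorphic torsors, and conversely a torsor yields a cocycle by recording whether each edge glues the chosen labels straight or crossed. You fill in what the paper leaves implicit: extending the cover over $2$-cells using $\coboundary\coupling=0$, the lifts closing around $2$-cells in the converse, the mutual-inverse check, and the connectedness claim via Criterion~\ref{crit:holonomy}. The only imprecision is the higher-cell step, which holds because the attaching spheres $S^{k-1}$ ($k\ge 3$) are simply connected, not because of where their images land.
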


The double-cover construction of \S\ref{sec:torsors} sends a cocycle 
$\coupling$ to a torsor $\torsor_\coupling$. Cohomologous cocycles yield 
isomorphic torsors: if $\coupling' = \coupling + \coboundary\xi$, then 
relabeling sheets by $\xi$ provides the isomorphism. Conversely, any double 
cover determines a cocycle by recording, for each edge, whether the two 
sheets connect straight or crossed. The correspondence respects the group 
structure: tensor product of torsors corresponds to addition of cocycles.
For more about network torsors and cohomology, see \cite{GhristCooperband2025Obstructions}.

\subsection{Cup products and Poincar\'e duality}

The \style{cup product} $\cupprod \,\colon H^p(X) \times H^q(X) \to H^{p+q}(X)$ 
makes $H^*(X; \Z_2)$ into a graded ring, associative and commutative 
(over $\Z_2$, signs are trivial). On closed surfaces, the cup product in 
$H^1 \times H^1 \to H^2$ has a geometric interpretation via 
\style{Poincar\'e duality}.

For a closed $n$-manifold $M$, Poincar\'e duality gives an isomorphism 
$\mathrm{PD}: H^k(M) \cong H_{n-k}(M)$. A class $\alpha \in H^1$ of a 
surface corresponds to a homology class representable by a closed curve 
$\gamma_\alpha$. The cup product translates to intersection:
\[
\mathrm{PD}(\alpha \cupprod \beta) = \gamma_\alpha \cap \gamma_\beta.
\]
In particular, $\alpha \cupprod \beta \neq 0$ if and only if the 
representing curves cannot be isotoped apart.

\subsection{$\Z_2$-gerbes}

For our purposes, a $\Z_2$-gerbe on $X$ is simply a class $[\mu] \in H^2(X; \Z_2)$. 
Just as torsors classify the ambiguity in choosing sections, gerbes classify the 
ambiguity in choosing trivializations. A 
\style{trivialization} is a $1$-cochain $A$ with $\coboundary A = \mu$; 
such an $A$ exists if and only if $[\mu] = 0$. When trivializations exist, 
they form a torsor over $H^1(X)$: the ambiguity in choosing a potential 
mirrors the ambiguity in choosing a global section one degree down.

\end{document}